\documentclass[12pt]{article}

\usepackage{amsmath, amssymb, amsfonts}
\usepackage[mathscr]{euscript}
\usepackage{hyperref}
\usepackage{setspace}
\usepackage{geometry}
\usepackage{enumitem}
\usepackage{amsthm}
\usepackage[title]{appendix}

\usepackage{float}

\usepackage{amssymb}
\usepackage[mathscr]{euscript}
\usepackage{amsmath,amsrefs}
\usepackage{stmaryrd}

\usepackage{indentfirst}
\usepackage{empheq}

\newtheorem{theorem}{Theorem}
\newtheorem{corollary}[theorem]{Corollary}
\newtheorem{lemma}[theorem]{Lemma}
\newtheorem{proposition}[theorem]{Proposition}
\newtheorem{definition}[theorem]{Definition}
 
\newtheorem{remark}[theorem]{Remark}

\usepackage{hyperref}
\hypersetup{hypertex = true,
colorlinks = true,
linkcolor = blue,
anchorcolor = blue,
citecolor = blue}

\usepackage{setspace}
\begin{document}

\title{Divisibility of Trace Codes}

\author{
Hexiang Huang\thanks{School of Cyber Science and Technology, Shandong University, Qingdao 266000, China. Email: hexianghuang@foxmail.com} \and
Haihua Deng\thanks{School of Mathematical Sciences, Zhejiang University, Hangzhou 310058, China. Email: haihua.deng@zju.edu.cn (Corresponding author)} \and
Sihuang Hu\thanks{School of Cyber Science and Technology, Shandong University, Qingdao 266000, China. Email: husihuang@sdu.edu.cn}
}
\date{}
\maketitle

\begin{abstract}
A linear code is said to be $\Delta$-divisible if the Hamming weights of all its codewords are divisible by $\Delta$. The $p$-adic valuation of a code is defined as the greatest integer $t$ such that the code is $p^t$-divisible. In this paper, we establish a divisibility criterion for trace codes. Specifically, this criterion provides a systematic method to determine the $p$-adic valuation of the associated trace code, thereby extending Ward's classical divisibility criterion from standard generating sets (or matrices) to generalized generator matrices over an extension field. Furthermore, we present two applications of our framework. The first application provides a concise proof of the celebrated divisibility results on semisimple Abelian codes established by Delsarte and McEliece. The second application establishes several explicit lower bounds on the $p$-adic valuation of the number of solutions over $\mathbb{F}_{q^m}$ (where $q = p^e$) to the Artin--Schreier type equation $ f(x_1,\ldots,x_k)=y^q-y $. In particular, under the coprime condition $\left(d,\frac{q^m-1}{q-1}\right)=1$, we determine the exact minimum $p$-adic valuation of the number of solutions when $f$ is restricted to homogeneous polynomials of degree $d$. 
\end{abstract}

\noindent\textbf{Keywords:} Divisible code, Trace code, Divisibility criterion, Artin--Schreier type equation, Teichm\"uller lift, Stickelberger's theorem.

\vspace{0.5em}
\noindent\textbf{MSC 2020:} Primary 11T71, 94B05; Secondary 11T06, 11T23, 94B15.

\section{Introduction}

The study of the divisibility of the number of zeros for polynomials traces back to the celebrated Chevalley--Warning theorem \cite{chevalley1935, warning1935}, which establishes that the number of common zeros of polynomials $f_1,\ldots,f_r\in\mathbb F_q[x_1,\ldots,x_k]$ in $\mathbb F_q^k$ ($q$ is a power of prime $p$) is divisible by the characteristic $p$ whenever $\sum_{i=1}^r\deg f_i<k$. Classical proofs of this result rely on finite-field power-sum identities. A major strengthening was obtained by Ax \cite{ax1964zeroes}, who, building on Dwork's $p$-adic ideas \cite{dwork1962}, analyzed the associated exponential sums and proved that, for a polynomial $f\in\mathbb F_q[x_1,\ldots,x_k]$ of degree $d$, the number of zeros of $f$ is divisible by $q^{\lceil k/d\rceil-1}$. N. M. Katz \cite[Theorem~1.0]{Katz1971on} extended this result to systems of polynomials. More precisely, the number of common zeros of $f_1,\ldots,f_r$ is divisible by $q^\mu$, where
\[\mu=\max\left\{0,\left\lceil
\frac{k-\sum_{i=1}^r\deg f_i}
{\max_{1\le i\le r}\deg f_i}
\right\rceil
\right\}.
\]
All the above results are sharp, which means that there exist polynomials with the exact divisibility. Subsequent research has refined these estimates for polynomials spanned by some given monomials, employing diverse geometric and combinatorial approaches such as $p$-adic Newton polyhedra \cite{adolphson1987padic} and $p$-weight degree reductions \cite{moreno1995improvement}. Furthermore, a sharp $p$-divisibility is formulated in \cite[Theorem 7]{moreno2004tight} as an optimization problem over a system of modular equations and various techniques are then employed to simplify these constraints and establish concrete bounds.

The divisibility phenomenon revealed by Ax's theorem has found important applications in algebraic coding theory. A linear code $C$ over $\mathbb{F}_q$ is called \textit{$\Delta$-divisible} if $\Delta|\mathrm{wt}(c)$ for every $c\in C$ \cite{ward1981divisible}. Further details on divisible codes can be found in the chapter \cite{Honold2018Partial} by Honold, Kiermaier, and Kurz, and in the extensive survey by Kurz \cite{kurz2021divisible}. In \cite[Theorem 1]{ward1981divisible}, Ward proved a fundamental result regarding divisible codes, which states that any $\Delta$-divisible linear code $C$ over $\mathbb{F}_q$ with $(\Delta,q)=1$ is equivalent to a $\Delta$-fold repetition of some code $C'$, supplemented by sufficiently many 0 coordinates. Therefore, $p$-power divisible codes over $\mathbb{F}_q$ will be of particular interest.

For a nonzero code $C$, the greatest integer $t$ for which $C$ is $p^t$-divisible is called the \emph{$p$-adic valuation} of $C$, denoted by $\nu_p(C)$. Assuming that $q=p^e$, Ax's theorem can be interpreted as a 
$p$-divisibility result for generalized Reed--Muller codes: the $p$-adic valuation of the $d$-th order $k$-variate generalized Reed--Muller code $RM_q(d,k)$ is exactly $\nu_p(RM_q(d,k))=e(\lceil k/d\rceil-1)$. This viewpoint initiated a systematic study of $p$-divisibility in linear codes. In a pioneering work, McEliece \cite[Corollary after Theorem 2]{McEliece1972} gave a spectral characterization of the \(p\)-adic valuation of semisimple cyclic codes over prime fields \(\mathbb{F}_p\), reducing its determination to a minimization problem governed by multiplicative relations among the spectral elements.
Delsarte and McEliece \cite[Theorem 1.1]{delsarte1976} subsequently extended this spectral characterization to semisimple Abelian codes over arbitrary finite fields.

The study of $p$-divisibility was later extended to codes over finite rings. Lower bounds for the $p$-adic valuation of cyclic codes over $\mathbb{Z}/p^c\mathbb{Z}$ were obtained by Calderbank--Li--Poonen \cite[Corollary 3.6 and Theorem 3.7]{CalderbankLiPoonen1997} for $p=2$ and improved by Wilson \cite[Theorems 2--3]{Wilson1995LeeMetric} \cite[Theorem 9]{wilson2006lemma}. D.~J.~Katz further developed a general framework for studying Abelian codes over $\mathbb{Z}/p^c\mathbb{Z}$ and Galois rings using counting polynomials and trace averaging \cite{Katz2005,Katz2006}, and subsequently established a sharp bound for the $p$-adic valuation of Abelian codes over $\mathbb{Z}/p^c\mathbb{Z}$ \cite[Theorem 34]{Katz2008}; see also the correction \cite{Katz2010Correction}. A unified treatment connecting the Delsarte--McEliece and Chevalley--Warning--Ax--Katz viewpoints was subsequently developed in \cite{Katz2012}. On the other hand, Ward \cite[Theorems 4.4 and 5.6]{Ward1983} studied the $p$-divisibility of group codes over finite fields, allowing the underlying group to be non-Abelian, and characterized the $p$-adic valuation in terms of invariant multilinear forms under suitable conditions. De la Cruz and Willems \cite[Theorem 2.2]{deLaCruzWillems2023} later clarified one of these conditions by characterizing it in terms of two-sided ideals of group algebras. Ward \cite[Propositions 6.3--6.4]{Ward1994} also determined the \(p\)-adic valuation of codes arising from powers of the radicals of group algebras of finite \(p\)-groups, thereby extending the study of \(p\)-divisibility to the modular, non-semisimple setting.

A complementary approach, not restricted to cyclic or abelian codes, was initiated by Ward. He developed the combinatorial polarization method \cite{ward1979combinatorial} and, specializing it to the Hamming weight function, obtained a divisibility criterion that can be checked on any additive spanning set of a linear code \cite[Theorem 5.3]{ward1990weight}. The criterion is particularly useful when structural information simplifies the associated moment terms or restricts the feasible exponent tuples. Applied to generalized Reed--Muller codes, Ward's criterion recovers Ax's theorem \cite[Section 6]{ward1990weight}. A further application is to Griesmer codes: the same criterion yields strong $p$-divisibility theorems for codes meeting the Griesmer bound \cite[Theorem 1]{ward1998divisibility}, \cite[Theorem 1.13]{deng2026divisibility}, and the divisibility constraints derived from Ward’s criterion have also contributed to the nonexistence proofs for several putative Griesmer parameter sets, either as a principal tool or in combination with other methods \cite{maruta2004nonexistence,ward2004sequence,kawabata2022nonexistence}.

Ward's divisibility criterion is powerful enough to recover Ax's theorem; however, it does not appear to yield the Delsarte--McEliece theorem for abelian codes in a direct way. It is therefore natural to seek a unified framework that encompasses both results. 

In Section~\ref{Preliminaries}, we recall the necessary preliminaries on Fourier analysis, $p$-adic fields, and Gauss sums. Section~\ref{sec:Divisibility_criterion_of_trace_codes} establishes a generalized divisibility criterion for trace codes by combining the arithmetic of Stickelberger’s theorem with a $p$-adic valuation preserving argument. In Section~\ref{sec:Application_to_Abelian_codes}, we apply this criterion to the trace representation of semisimple abelian codes and recover the Delsarte--McEliece theorem. Our formulation of the $p$-divisibility of abelian codes over finite fields is expressed as a $p$-weight minimization subject to a system of modular equations, and it automatically restricts the search to feasible solutions in a bounded range.

In Section~\ref{sec:artin_schreier}, we investigate the $p$-divisibility of the number of solutions to the Artin--Schreier type equation $f=y^q-y$ in $\mathbb{F}_{q^m}^k$, where $f\in\mathbb{F}_{q^m}[x_1,\ldots,x_k]$. The relation between Artin--Schreier curves and the $p$-divisibility of the corresponding exponential sums has been studied by Blache in \cite{Blache2012}. In \cite{Mueller2013}, Mueller considered the Artin--Schreier equation $f(x)=y^q-y$ with $f\in\mathbb{F}_q[x]$, and obtained a lower bound on the $p$-adic valuation of the number of zeros of this equation in $\mathbb{F}_{q^m}$ for integers $m\ge 2$. Using the trace-code criterion, we formulate a $p$-weight minimization problem that yields a sharp lower bound on this $p$-divisibility for polynomials with restricted degree set (see Section~\ref{sec:artin_schreier} for the definition). We also derive several explicit lower bounds for the same $p$-adic valuation, where Theorem \ref{thm:non_homogeneous_second_bound} includes \cite[Theorem VI.3]{Mueller2013} as a special case. Most notably, when $f$ is a homogeneous polynomial of degree $d$ satisfying a suitable coprimality condition, our method determines a tight lower bound.

Finally, Section~\ref{sec:conclusions} compares our approach with previous methods and concludes the paper.

\section{Preliminaries}\label{Preliminaries}

In this section, we recall some essential preliminaries on Fourier analysis, $p$-adic fields and Gauss sums; we refer the reader to \cite{Babai1989} for Fourier analysis on finite Abelian groups, to \cite{Koblitz1984padicnumber, Robert2000course} for $p$-adic number theory, and to \cite{hou2018lecture} for Stickelberger's theorem on Gauss sums.

Throughout, $\mathbb{F}_q$ denotes the finite field of $q=p^e$ elements, where $p$ is a prime. Tuples are consistently denoted by boldface letters (e.g., $\boldsymbol{r}$), while their components are written in standard font with subscripts (e.g., $r_i$), unless otherwise stated. For $\boldsymbol{r}=(r_1,\ldots,r_k)$, we denote $|\boldsymbol{r}| = \sum_{i = 1}^k r_i$.  For non-negative integers $m < n$, let $[m,n]$ denote the set $\{i\in\mathbb{Z}:m\le i\le n\}$. In particular, when $m=1$, we simply write $[n]$ for $[1,n]$. 

\subsection{Fourier analysis}

Let $A$ be a finite Abelian group, written additively. The \textit{exponent} $N$ of $A$ is the smallest positive integer such that $Na = 0$ for all $a\in A$. Assume that $F$ is a field such that $|A|\neq 0 \in F$ . Further, suppose that $F$ is a field containing a primitive $N$-th root of unity. A function $\chi:A\to F$ is called a \emph{character} of $A$ (over $F$) if $\chi$ is a group homomorphism from $A$ to the multiplicative group $F^\times:=\{x\in F:x\neq 0\}$. 

Clearly, the set of characters of $A$ is closed under pointwise multiplication and thus forms a multiplicative group, denoted by $\widehat{A}$. The \emph{trivial character} of $A$, often denoted by $\chi_0$, is the identity element in the character group $\widehat{A}$, which sends every group element to $1$. It is well known that the character group $\widehat{A}$ is isomorphic to $A$. Denote $\xi_n\in F$ a primitive $n$-th root of unity. In the case of $A = \mathbb{Z}/n\mathbb{Z}$, the character group of $A$ is given by
\[\widehat{A} = \{\chi_a:a\in A\},\]
where $\chi_a:A\to F$ sends $b$ to $\xi_{n}^{ab}$. In general, assume $A = \mathbb{Z}/n_1\mathbb{Z}\times \cdots \times \mathbb{Z}/n_h\mathbb{Z}$. Then the character group of $A$ is given by
\[\widehat{A} = \{\chi_{\boldsymbol{a}}:\boldsymbol{a}\in A\},\]
where $\chi_{\boldsymbol{a}}(\boldsymbol{b}) = \prod_{\ell = 1}^{h}\xi_{n_\ell}^{a_\ell b_\ell}$.

The following result is sometimes called the \emph{first orthogonality relation}.

\begin{lemma}[{\cite[Proposition 1.1]{Babai1989}}]
\label{lem:orthogonality_relation}
    Let $\chi$ be a nontrivial character of $A$. Then 
    \[\sum_{a\in A}\chi(a) = 0.\]
\end{lemma}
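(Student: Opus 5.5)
The plan is the standard translation-invariance argument. Set $S=\sum_{a\in A}\chi(a)$, an element of $F$. Since $\chi$ is nontrivial, we can fix some $b\in A$ with $\chi(b)\neq 1$; this choice is the only place the hypothesis enters.

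Next, translation by $b$, namely $a\mapsto a+b$, is a bijection of the finite group $A$ onto itself. Reindexing the sum and using that $\chi$ is a homomorphism from the additive group $A$ to $F^\times$ gives
\[
\chi(b)\,S=\sum_{a\in A}\chi(b)\chi(a)=\sum_{a\in A}\chi(a+b)=\sum_{a'\in A}\chi(a')=S .
\]

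Hence $(\chi(b)-1)S=0$ in $F$. Since $F$ is a field and $\chi(b)-1\neq 0$, it follows that $S=0$.

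I do not expect a real obstacle here. The only points to check are that the reindexing is legitimate, which holds because translation is a bijection of the finite set $A$, and that cancellation is allowed in $F$. The standing hypotheses that $|A|\neq 0$ in $F$ and that $F$ contains a primitive $N$-th root of unity are not needed for this particular relation. They matter only for the other statements in this subsection, such as the existence of enough characters and Fourier inversion.
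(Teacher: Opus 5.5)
Your proof is correct. You pick $b$ with $\chi(b)\neq 1$, reindex by the translation $a\mapsto a+b$ to get $\chi(b)S=S$, and cancel $\chi(b)-1\neq 0$ in the field $F$; this is the standard argument. The paper gives no proof of its own and simply cites \cite[Proposition 1.1]{Babai1989}, which rests on this same translation-invariance argument, and you are right that the standing hypotheses ($|A|\neq 0$ in $F$, a primitive $N$-th root of unity in $F$) play no role here.
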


It follows from the preceding lemma that
\begin{equation}
\label{eq:orthogonality_of_two_characters}
    \sum_{a\in A}\chi_1(a)\chi_2(a)^{-1} = \begin{cases}
    0, &\text{if }\chi_1\neq \chi_2,\\
    |A|, &\text{otherwise}.
\end{cases}
\end{equation}
Given a function $f:A\to F$. The \emph{Fourier transform} of $f$ is defined as a function $\widehat{f}:\widehat{A}\to F$ which is given by
\[\widehat{f}(\chi) = \sum_{a\in A}f(a)\chi(a)^{-1},\quad\text{for all }\chi\in \widehat{A}.\]
Using \eqref{eq:orthogonality_of_two_characters}, one can verify that the inverse of the Fourier transform is 
\[f(a) = \frac{1}{|A|}\sum_{\chi\in \widehat{A}}\widehat{f}(\chi)\chi(a),\quad\text{for all }a\in A.\]

For $A = \mathbb{Z}/n_1\mathbb{Z}\times \cdots \times \mathbb{Z}/n_h\mathbb{Z}$, each character $\chi_{\boldsymbol{a}}$ is labeled by a group element $\boldsymbol{a}$. For convenience, we will regard the Fourier transform of $f$ as a function on $A$, given by
\begin{equation}
\label{eq:fourier_tranform}
    \widehat{f}(\boldsymbol{a}) = \sum_{\boldsymbol{b}\in A}f(\boldsymbol{b})\chi_{\boldsymbol{a}}(\boldsymbol{b})^{-1},\quad\text{for all }\boldsymbol{a}\in A.
\end{equation}
The corresponding inverse transform is given by
\begin{equation}
\label{eq:inverse_fourier_transform}
    f(\boldsymbol{b}) = \frac{1}{|A|}\sum_{\boldsymbol{a}\in A}\widehat{f}(\boldsymbol{a})\chi_{\boldsymbol{a}}(\boldsymbol{b}),\quad \text{for all }\boldsymbol{b}\in A.
\end{equation}

\subsection{$p$-adic fields}
\label{subsec:p-adic fields}
Let $\mathbb{Z}_p$ be the ring of $p$-adic integers and $\mathbb{Q}_p$ be the field of $p$-adic numbers, equipped with the standard $p$-adic valuation $\nu_p$. Let $\mathbb{C}_p$ denote the $p$-adic completion of the algebraic closure of $\mathbb{Q}_p$. For any positive integer $m$, let $\xi_m \in \mathbb{C}_p$ denote a primitive $m$-th root of unity. It is a standard fact that the ring of integers of the cyclotomic extension $\mathbb{Q}_p(\xi_m)$ is precisely $\mathbb{Z}_p[\xi_m]$.

Consider the specific tower of cyclotomic extensions $\mathbb{Q}_p \subseteq \mathbb{Q}_p(\xi_{q-1}) \subseteq \mathbb{Q}_p(\xi_{p(q-1)})$. The ring of integers $\mathbb{Z}_p[\xi_{q-1}]$ of the intermediate field $\mathbb{Q}_p(\xi_{q-1})$ is a local ring with maximal ideal $(p)$. Its residue field, $\mathbb{Z}_p[\xi_{q-1}]/(p)$, is a finite field of order $q$, which we identify with $\mathbb{F}_q$. Furthermore, $\mathbb{Z}_p[\xi_{q-1}]$ contains the multiplicative group $U_{q-1}$ consisting of all $(q-1)$-st roots of unity. Let $U = U_{q-1} \cup \{0\}$ denote the set of all roots of $X^q-X$.

The canonical map
$$ \varphi: \mathbb{Z}_p[\xi_{q-1}] \to \mathbb{Z}_p[\xi_{q-1}]/(p) = \mathbb{F}_q $$
induces a bijection between $U$ and $\mathbb{F}_q$. Consequently, $U$ serves as a complete set of representatives for $\mathbb{Z}_p[\xi_{q-1}]$ modulo $(p)$. The \textit{Teichm\"uller lift} is defined as the inverse bijection $T: \mathbb{F}_q \to U$, mapping each residue class to its unique representative in $U$. Explicitly, $T(\overline{0}) = 0$ and $T(\overline{\xi_{q-1}^i}) = \xi_{q-1}^i$ for $1 \le i \le q-1$, where $\overline{x} = x + (p) \in \mathbb{F}_q$.

Finally, we consider the subsequent extension to $\mathbb{Q}_p(\xi_{p(q-1)})$. In the corresponding ring of integers $\mathbb{Z}_p[\xi_{p(q-1)}]$, the principal ideal $(p)$ totally ramifies as $(p) = (\pi)^{p-1}$, where $\pi = \xi_p - 1$ is a uniformizing parameter. This allows us to uniquely extend the $p$-adic valuation $\nu_p$ from $\mathbb{Q}_p$ to the entire field $\mathbb{Q}_p(\xi_{p(q-1)})$ by defining $\nu_p(x) = \frac{1}{p-1}\nu_{\pi}(x)$. The valuation $\nu_p(x)$ of any element $x \in \mathbb{Q}_p(\xi_{p(q-1)})$ takes values in $\frac{1}{p-1}\mathbb{Z} \cup \{\infty\}$, with $\nu_p(0) := \infty$.

For $a,b\in\mathbb{N}$, we let $S_p(a)$ denote the sum of the digits in the $p$-adic expansion of $a$ and use $(a,b)$ to denote the greatest common divisor of $a$ and $b$, and $\mathrm{lcm}(a,b)$ to denote the least common multiple of $a$ and $b$. Let $\boldsymbol{x}=(x_1,\ldots,x_n)\in\mathbb{Q}_p(\xi_{p(q-1)})^n$. The $p$-adic valuation of $\boldsymbol{x}$ is defined to be the minimum of $p$-adic valuation of $x_i$'s, i.e., $\nu_p(\boldsymbol{x})=\min\{\nu_p(x_i):1\le i\le n\}$.

\subsection{Gauss sums and Stickelberger's theorem}

A \textit{($p$-adic) character} of $\mathbb{F}_q^{\times}$ is a homomorphism $\chi:\mathbb{F}_q^{\times}\to\mathbb{C}_p^{\times}$. The character group $\widehat{\mathbb{F}_q^\times}$ is generated by the Teichm\"uller lift $T$ when restricted to $\mathbb{F}_q^{\times}$. The \emph{trace} $\mathrm{Tr}_{q/p}$ is defined as a mapping $x\mapsto \sum_{i = 0}^{e-1}x^{p^i}$. The canonical additive character of $\mathbb{F}_q$ is given by $x\mapsto \xi_p^{\mathrm{Tr}_{q/p}(x)}$, where the trace $\mathrm{Tr}_{q/p}$ will be abbreviated as $\mathrm{Tr}$ when there is no ambiguity. Denote the \textit{Gauss sum} over $\mathbb{F}_q$ with respect to $\chi\in \widehat{\mathbb{F}_q^\times}$ by
\[g(\chi) = \sum_{x\in \mathbb{F}_q^\times}\xi_p^{\mathrm{Tr}(x)}\chi(x).\]
For $0 \le i \le q-1$, we define $T^{-i}:\mathbb{F}_q^{\times}\to U$ by $x\mapsto (T^{i}(x))^{-1}$. Clearly, each $T^{-i}$ is a character in $\widehat{\mathbb{F}_q^\times}$. Note that $T^{-0}$ and $T^{-(q-1)}$ are both trivial characters of $\mathbb{F}_q^\times$, and hence $g\left(T^{-0}\right)=g\left(T^{-(q-1)}\right)=-1$. Here, we remark that our convention for Gauss sums differs slightly from some literature, specifically regarding the trivial character. This convention is particularly convenient for our purposes.

The following standard expansion of the canonical additive character will be used in the proof of Theorem \ref{thm:trace_version_criterion_general_case}. We define the coefficients
$$ \lambda_i = \begin{cases}
    1, &\text{if } i = 0,\\
    \frac{1}{q-1}g(T^{-i}), &\text{if } 1\le i\le q-2,\\
    -\frac{q}{q-1}, &\text{if } i = q-1.
\end{cases} $$

\begin{lemma}[{\cite[Lemma 5.1]{hou2018lecture}}]
\label{lem:fourier_expansion_of_additive_character}
    The additive character $x \mapsto \xi_p^{\mathrm{Tr}(x)}$ on $\mathbb{F}_q$ admits a unique polynomial expansion in terms of the Teichm\"uller lift $T(x)$ of degree at most $q-1$:
    $$ \xi_p^{\mathrm{Tr}(x)} = \sum_{i = 0}^{q-1}\lambda_i T(x)^i $$
    for all $x\in \mathbb{F}_q$.
\end{lemma}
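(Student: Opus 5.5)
The plan is to view both sides as functions on $\mathbb{F}_q$ and use the discrete Fourier expansion on the cyclic group $\mathbb{F}_q^\times$, together with a separate treatment of $x=0$. First I would prove uniqueness. Since $T$ is a bijection from $\mathbb{F}_q$ onto $U$, a polynomial $P(X)=\sum_{i=0}^{q-1}c_iX^i$ with $c_i\in\mathbb{C}_p$ vanishing on all of $U$ has $q$ distinct roots. Its degree is at most $q-1$, so $P=0$. Hence there is at most one expansion of degree $\le q-1$, and it suffices to check that the stated $\lambda_i$ work.

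For existence I would first handle $x=0$. There $T(0)=0$, so the right-hand side reduces to $\lambda_0=1=\xi_p^{\mathrm{Tr}(0)}$. For $x\neq0$, I would write $\psi(x)=\xi_p^{\mathrm{Tr}(x)}$ and expand $\psi$ restricted to $\mathbb{F}_q^\times$ in the characters $T^j$, $0\le j\le q-2$, using \eqref{eq:inverse_fourier_transform}. This gives
\[\psi(x)=\frac{1}{q-1}\sum_{j=0}^{q-2}\Bigl(\sum_{y\in\mathbb{F}_q^\times}\psi(y)T(y)^{-j}\Bigr)T(x)^j=\frac{1}{q-1}\sum_{j=0}^{q-2}g(T^{-j})\,T(x)^j .\]
The $j=0$ term is $\frac{g(T^{-0})}{q-1}=-\frac{1}{q-1}$, because the sum of $\psi$ over $\mathbb{F}_q^\times$ is $-1$ by Lemma~\ref{lem:orthogonality_relation}. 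The terms with $1\le j\le q-2$ are exactly $\lambda_jT(x)^j$.

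The remaining step, and the only real subtlety, is matching the constant term with the convention $\lambda_0=1$, $\lambda_{q-1}=-\frac{q}{q-1}$. For $x\neq0$ we have $T(x)^{q-1}=1$, so
\[\lambda_0+\lambda_{q-1}T(x)^{q-1}=1-\frac{q}{q-1}=-\frac{1}{q-1},\]
which agrees with the $j=0$ term computed above. For $x=0$, the terms with $1\le i\le q-1$ all vanish, consistent with the first case. This proves the identity for all $x\in\mathbb{F}_q$.

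Note that the Fourier expansion requires $q-1$ to be invertible and $\mathbb{C}_p$ to contain the $(q-1)$-st roots of unity, and both hold. I expect no genuine obstacle. The only point needing care is splitting the trivial-character contribution between $\lambda_0$ and $\lambda_{q-1}$ so that the formula also holds at $x=0$.
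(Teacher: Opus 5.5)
Your proof is correct. The paper gives no proof of its own and only cites Hou's lecture notes, where the argument is essentially yours: Fourier inversion of $\psi$ on $\mathbb{F}_q^\times$ in the characters $T^j$, $0\le j\le q-2$, then using $g(T^{-0})=-1$ and $T(x)^{q-1}=1$ to split $-\frac{1}{q-1}$ as $\lambda_0+\lambda_{q-1}T(x)^{q-1}$, checking $x=0$ directly, and getting uniqueness by counting the $q$ roots in $U$. One small point is left implicit: Lemma~\ref{lem:orthogonality_relation} needs $\psi$ to be nontrivial to give $\sum_{y\in\mathbb{F}_q^\times}\psi(y)=-1$, and this holds because $\mathrm{Tr}_{q/p}$ maps onto $\mathbb{F}_p$.
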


\begin{theorem}[Stickelberger's Theorem, cf. {\cite[Theorem 3 in Chapter 14]{ireland1990classical}}]\label{Stickelberger's Theorem}
    For $i = 0,\ldots,q-2$, the $p$-adic valuation of the Gauss sum is given by
    \[\nu_p(g(T^{-i})) = \frac{S_p(i)}{p-1},\]
    where $S_p(i)$ is the digit sum of the base-$p$ expansion of $i$. Therefore, for $i = 0,\ldots,q-1$,
    \[\nu_p(\lambda_i) = \frac{S_p(i)}{p-1}.\]
\end{theorem}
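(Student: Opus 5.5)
The plan is to take the first assertion, $\nu_p(g(T^{-i}))=S_p(i)/(p-1)$ for $0\le i\le q-2$, as the classical content of Stickelberger's theorem. The second assertion, about $\nu_p(\lambda_i)$, is then obtained by checking the three cases in the definition of $\lambda_i$.

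For the Gauss sum itself, the case $i=0$ is immediate: $g(T^{-0})=-1$ has valuation $0=S_p(0)/(p-1)$. For $1\le i\le q-2$, I would follow the standard route via the Gross--Koblitz formula, or more elementarily via Stickelberger's congruence. Write $\pi=\xi_p-1$. One shows
\[
g(T^{-i})\equiv -\frac{\pi^{S_p(i)}}{\prod_j i_j!}\pmod{\pi^{S_p(i)+1}},
\]
where $i=\sum_j i_jp^j$ is the base-$p$ expansion of $i$.

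The proof of this congruence proceeds in three steps:
\begin{enumerate}
\item First treat $i<p$, using the Dwork exponential (or the expansion $\xi_p=1+\pi$ together with the binomial expansion of $(1+\pi)^{\mathrm{Tr}(x)}$) and the orthogonality relations of Lemma~\ref{lem:orthogonality_relation} applied to the lifted power sums $\sum_x T(x)^{j-i}$.
\item Then pass to general $i$ by multiplicativity in the digits, using the Hasse--Davenport product relation (equivalently, the factorization of Gauss sums by digit through Jacobi sums, whose valuations are computable since they are integral with known norms).
\item Since $\prod_j i_j!$ is a $p$-adic unit (each $i_j<p$) and $\nu_p(\pi)=1/(p-1)$, the valuation follows.
\end{enumerate}
The main obstacle is the digit-by-digit reduction in the second step. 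I would handle it by the congruence approach in Ireland--Rosen: relate $g(T^{-i})^{p}$ to $g(T^{-pi})$ via the Frobenius, and use the prime-ideal factorization of $g^{q-1}$. Since this is a cited theorem, I would ultimately defer to \cite{ireland1990classical}.

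For the $\lambda_i$, the cases are as follows:
\begin{itemize}
\item If $i=0$, then $\lambda_0=1$ has valuation $0=S_p(0)/(p-1)$.
\item If $1\le i\le q-2$, then $\lambda_i=g(T^{-i})/(q-1)$, and $q-1$ is a $p$-adic unit, so $\nu_p(\lambda_i)=\nu_p(g(T^{-i}))=S_p(i)/(p-1)$.
\item If $i=q-1$, then $\nu_p(\lambda_{q-1})=\nu_p(q)=e$. Since $q-1$ has all $e$ digits equal to $p-1$, we get $S_p(q-1)=e(p-1)$, hence $S_p(q-1)/(p-1)=e$.
\end{itemize}
This completes the argument.
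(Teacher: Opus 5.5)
Your proposal is correct and matches the paper: the paper also takes the Gauss-sum valuation directly from Ireland--Rosen, and the claim for $\lambda_i$ follows from exactly your three cases ($\lambda_0=1$; $q-1$ a $p$-adic unit; $\nu_p(\lambda_{q-1})=\nu_p(q)=e=S_p(q-1)/(p-1)$). Your optional sketch of the classical proof is loose in two places, which is harmless since you defer to the citation: Frobenius gives the exact identity $g(T^{-pi})=g(T^{-i})$ rather than a relation involving $g(T^{-i})^p$, and the absolute value or norm of a Jacobi sum does not determine its valuation in the fixed embedding---what is needed is the congruence $J(T^{-b},T^{-c})\equiv-\binom{b+c}{b}\pmod{p}$, which is a unit by Lucas' theorem when adding $b$ and $c$ in base $p$ produces no carries.
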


Throughout, when working over \(\mathbb F_{Q}\) ($Q$ is power of $p$), \(T\) denotes the Teichmüller lift \(T:\mathbb F_{Q}\to U_{Q-1}\cup\{0\}\), where \(U_{Q-1}\subset\mathbb C_p^\times\) is the group of \((Q-1)\)-st roots of unity, and \(\lambda_i\), \(0\le i\le Q-1\), denotes the corresponding coefficient in the expansion
$$ \xi_p^{\operatorname{Tr}_{Q/p}(x)} =\sum_{i=0}^{Q-1}\lambda_iT(x)^i. $$
The underlying field will always be clear from the context.

\section{Divisibility criterion of trace codes}\label{sec:Divisibility_criterion_of_trace_codes}

A \textit{linear code} $C$ over $\mathbb{F}_q$ of length $n$ is simply a subspace of $\mathbb{F}_q^n$. A \textit{generator matrix} of $C$ is a matrix whose rows span \(C\); redundant rows are allowed. An \textit{additive spanning set} of $C$ is a set of generators of the additive group $(C,+)$. We first restate  Ward's criterion \cite{ward1990weight} in our notation.

\begin{theorem}[Ward's Divisibility Criterion, {cf. \cite[Theorem 5.3]{ward1990weight}}]
\label{thm:ward_criterion}
Let $C$ be a linear code of length $n$ over $\mathbb{F}_q$ ($q=p^e$). Let $S = \{b_1,\ldots,b_k\}$ be an additive spanning set of $C$ with $b_{ij}$ denoting the $j$-th coordinate of $b_i$. Then the $p$-adic valuation of $C$ is the minimum value of 
$$\frac{1}{p-1}\sum_{i=1}^k S_p(r_i)+\nu_p\left(\sum_{j = 1}^n\prod_{i = 1}^{k}T(b_{ij})^{r_i}\right)-e,$$
where $\boldsymbol{r} \in [0, q-1]^k$ with $|\boldsymbol{r}| \equiv 0 \pmod{q-1}$ and $|\boldsymbol{r}| > 0$.
\end{theorem}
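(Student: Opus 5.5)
We express Hamming weights through additive characters, expand each character with the Teichm\"uller polynomial of Lemma~\ref{lem:fourier_expansion_of_additive_character}, and read off $p$-adic valuations with Stickelberger's Theorem. A codeword of $C$ has the form $c=\sum_i a_i b_i$ with $a_i\in\mathbb{F}_p$; since $S$ spans $C$ additively, every codeword arises this way. For a field element $z$, orthogonality over $\mathbb{F}_q$ gives $\sum_{y\in\mathbb{F}_q}\xi_p^{\mathrm{Tr}(yz)}=q\,[z=0]$. Summing over coordinates,
\[
n-\mathrm{wt}(c)=\frac1q\sum_{j=1}^n\sum_{y\in\mathbb{F}_q}\prod_{i=1}^k\xi_p^{\mathrm{Tr}(y a_i b_{ij})}.
\]
Rather than fixing $a_i\in\mathbb{F}_p$, it is cleaner to let the coefficients range over $\mathbb{F}_q$, because $C$ is $\mathbb{F}_q$-linear. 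Consider $W(\boldsymbol{x})=\mathrm{wt}\bigl(\sum_i x_i b_i\bigr)$ for $\boldsymbol{x}\in\mathbb{F}_q^k$. Absorbing $y$ into the $x_i$ gives
\[
q\,(n-W(\boldsymbol{x}))=\sum_j\sum_{y}\prod_i\xi_p^{\mathrm{Tr}(y x_i b_{ij})}.
\]
Expanding each factor by Lemma~\ref{lem:fourier_expansion_of_additive_character} and using multiplicativity of $T$ yields
\[
\sum_{\boldsymbol{r}\in[0,q-1]^k}\Bigl(\prod_i\lambda_{r_i}T(x_i)^{r_i}\Bigr)\sum_{y}T(y)^{|\boldsymbol{r}|}\sum_j\prod_iT(b_{ij})^{r_i}.
\]
The inner sum $\sum_y T(y)^{|\boldsymbol r|}$ equals $q-1$ if $|\boldsymbol r|\equiv0\pmod{q-1}$ and $|\boldsymbol r|>0$; it equals $q$ if $|\boldsymbol r|=0$, and it vanishes otherwise. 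The $\boldsymbol r=0$ term contributes exactly $qn$, and it cancels the $qn$ on the left. We are left with
\[
-q\,W(\boldsymbol x)=(q-1)\sum_{\boldsymbol r}\Lambda_{\boldsymbol r}\,M_{\boldsymbol r}\,\prod_iT(x_i)^{r_i},
\]
where the sum runs over admissible $\boldsymbol r$, $\Lambda_{\boldsymbol r}=\prod_i\lambda_{r_i}$, and $M_{\boldsymbol r}=\sum_j\prod_iT(b_{ij})^{r_i}$.

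This gives $\nu_p(C)\ge t_0$, where $t_0$ is the claimed minimum. By Stickelberger's Theorem, $\nu_p(\Lambda_{\boldsymbol r}M_{\boldsymbol r})=\frac1{p-1}\sum S_p(r_i)+\nu_p(M_{\boldsymbol r})$. Each monomial $\prod_i T(x_i)^{r_i}$ is a unit or zero, and $q-1$ is a unit. Hence $\nu_p(W(\boldsymbol x))\ge t_0$ for every $\boldsymbol x$, and the weights are rational integers. One subtlety: if this only gave spanning over $\mathbb{F}_p$, the $\mathbb{F}_q$-span would still be $C$, so nothing is lost.

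For the reverse inequality, view the identity as an equality of functions on $\mathbb{F}_q^k$. The main obstacle is showing that the minimum is actually attained, i.e.\ that no cancellation can push every weight higher. The functions $\boldsymbol x\mapsto\prod_iT(x_i)^{r_i}$ with $\boldsymbol r\in[0,q-1]^k$ are not linearly independent on $\mathbb{F}_q^k$, because the exponents $0$ and $q-1$ agree except at $0$. Over $\mathbb{Z}_p[\xi_{q-1}]$, however, they do form a basis of all functions $\mathbb{F}_q^k\to\mathbb{Z}_p[\xi_{q-1}]$: this is the tensor product of the one-variable basis $\{T(x)^i\}_{0\le i\le q-1}$, which is a basis because the point-evaluation matrix is Vandermonde-like with unit determinant modulo $p$, up to the standard $\delta_0$ adjustment. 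Moreover, the inverse coordinate map is given by characters $\frac1{q-1}\sum_{x\ne0}T(x)^{-i}(\cdot)$ together with evaluation at $0$. These involve only units, so the coefficient of a basis function is a $p$-integral combination of function values.

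Suppose now that all $W(\boldsymbol x)$ were divisible by $p^{t_0+\epsilon}$ with $\epsilon\ge1/(p-1)$. Recovering the coefficients $(q-1)\Lambda_{\boldsymbol r}M_{\boldsymbol r}/(-q)$ integrally from the values would force every coefficient to have valuation greater than $t_0-e$. This contradicts the choice of the minimizing $\boldsymbol r$, which attains valuation exactly $t_0-e$ after dividing by $q$. Hence $\nu_p(C)\le t_0$, and therefore $\nu_p(C)=t_0$.

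The care needed is in checking that the coefficient extraction really is unimodular over $\mathbb{Z}_p[\xi_{q-1}]$. Since every admissible $\boldsymbol r$ is nonzero, I would handle the $0$ versus $q-1$ ambiguity by working in each coordinate with the basis $\{1,T^1,\dots,T^{q-2},T^{q-1}-1\}$, that is, $\{1\}$ together with the indicator-type function $-\delta_0$. I would then verify that the change of basis is triangular with unit diagonal.
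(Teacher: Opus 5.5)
Your proposal is correct and follows essentially the paper's route. The paper likewise obtains Ward's criterion as the case $m=1$ of Theorem~\ref{thm:trace_version_criterion_general_case}, applied to the spanning set viewed as a redundant generator matrix with coefficients ranging over all of $\mathbb{F}_q^k$; it expands the weight via the Teichm\"uller expansion of the additive character and then combines Stickelberger's theorem with the unimodularity of the evaluation matrix (Lemma~\ref{lem:transform_preserving_p_adic_valuation}, proved there via $ZZ^*=(q-1)I$ rather than your Vandermonde-mod-$p$ argument).

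Two slips to fix. First, the functions $\prod_i T(x_i)^{r_i}$, $\boldsymbol r\in[0,q-1]^k$, are in fact linearly independent, since $T^0$ and $T^{q-1}$ differ precisely at $0$, so no basis adjustment is needed. Second, the minimizing coefficient $-\frac{q-1}{q}\Lambda_{\boldsymbol r}M_{\boldsymbol r}$ has valuation exactly $t_0$, not $t_0-e$, so the contradiction should read $\ge t_0+\epsilon$ versus $=t_0$.
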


\begin{remark}
We emphasize that the criterion concerns $\mathbb F_q$-linear codes. The use of an additive spanning set does not by itself imply an analogous statement for arbitrary additive
codes that are only $\mathbb F_p$-linear.
\end{remark}

The formulation above uses a fixed finite additive spanning set
$S=\{b_1,\ldots,b_k\}$ and assigns an exponent
$r_i\in[0,q-1]$ to each element of $S$. Thus, a spanning set of cardinality $k$ leads to a naive search space of size at most $q^k$. The size of this search space therefore depends on the chosen spanning set.

If $C$ has dimension $k$ over $\mathbb F_q$, one may start instead from an $\mathbb F_q$-basis $\{a_1,\ldots,a_k\}$. Although the union of its nonzero scalar multiples
\[
\bigcup_{i=1}^k\{\lambda a_i:\lambda\in\mathbb F_q^\times\}
\]
is an additive spanning set and would lead directly to
$k(q-1)$ exponent variables in the preceding formulation, this
scalar redundancy is unnecessary (see Corollary \ref{cor:generator_version}).

We next pass to trace codes, for which the natural representation is given by a generator matrix over an extension field rather than an additive spanning set
over the base field.

\begin{definition}
Let $\mathscr{C}\subseteq\mathbb F_{q^m}^n$ be a $\mathbb{F}_{q^m}$-linear code. Its trace code over
$\mathbb F_q$ is
\[
C = \mathrm{Tr}_{q^m/q}(\mathscr{C})
:=
\left\{
\bigl(\mathrm{Tr}_{q^m/q}(c_1),\ldots,\mathrm{Tr}_{q^m/q}(c_n)\bigr):
(c_1,\ldots,c_n)\in \mathscr{C}
\right\}.
\]
A generator matrix of $\mathscr{C}$ will be called a
\emph{generalized generator matrix} of $C$ over $\mathbb F_{q^m}$.
\end{definition}

Although Ward's criterion is powerful enough to deduce Ax's theorem, it does not seem to directly recover the Delsarte--McEliece theorem for Abelian codes. The primary obstacle is that the formulation of Ward's criterion does not directly apply to generalized generator matrices. Suppose $C$ is an $\mathbb{F}_q$-linear code and $\mathcal{G} = (g_{ij})_{i\in [k],j\in [n]}$ is a generalized generator matrix of $C$ over $\mathbb{F}_{q^m}$. Let $\{\beta_1,\ldots,\beta_{m}\}$ be an $\mathbb{F}_q$-basis of $\mathbb{F}_{q^m}$. Then it can be verified that the matrix
$$G = (\mathrm{Tr}_{q^m/q}(\beta_\ell g_{ij}))_{(i,\ell),j},$$with rows indexed by $(i,\ell)\in [k]\times [m]$ and columns indexed by $j\in [n]$, is a generator matrix of $C$. Applying Ward's criterion to $G$, one needs to compute the $p$-adic valuation of an expression involving trace functions, which requires substantial non-trivial simplification. To overcome this difficulty, we establish a novel divisibility criterion that operates directly on generalized generator matrices, thereby significantly extending its applicability.

We first introduce a crucial linear transformation which leaves the $p$-adic valuation invariant.

\begin{lemma}
\label{lem:transform_preserving_p_adic_valuation}
    Let $Q$ be a power of $p$. Denote $M = \left(\prod_{i = 1}^kT(a_i)^{r_i}\right)_{\boldsymbol{r}\in [0,Q-1]^k, \boldsymbol{a}\in \mathbb{F}_Q^k}$. Then for any vectors $\boldsymbol{x},\boldsymbol{y}$ over $\mathbb{Q}_p(\xi_{p(Q-1)})$ with $\boldsymbol{y} = \boldsymbol{x}M$, we have $\nu_p(\boldsymbol{y}) = \nu_p(\boldsymbol{x})$.
\end{lemma}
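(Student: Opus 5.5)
The matrix $M$ is square of size $Q^k\times Q^k$: rows are indexed by $\boldsymbol{r}\in[0,Q-1]^k$, columns by $\boldsymbol{a}\in\mathbb F_Q^k$. I will prove the two inequalities $\nu_p(\boldsymbol{y})\ge\nu_p(\boldsymbol{x})$ and $\nu_p(\boldsymbol{x})\ge\nu_p(\boldsymbol{y})$ separately.

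\textbf{The easy inequality.} Every entry of $M$ lies in $U$, and so in $\mathbb Z_p[\xi_{Q-1}]$. Hence each coordinate of $\boldsymbol{y}=\boldsymbol{x}M$ is a $\mathbb Z_p[\xi_{Q-1}]$-linear combination of the $x_{\boldsymbol{r}}$. Since these coefficients have nonnegative valuation, the ultrametric inequality gives $\nu_p(\boldsymbol{y})\ge\nu_p(\boldsymbol{x})$.

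\textbf{Reduction to a tensor factor.} For the reverse inequality it suffices to show that $M$ is invertible with $M^{-1}$ having entries of nonnegative valuation; then $\boldsymbol{x}=\boldsymbol{y}M^{-1}$ and the same argument applies. The matrix $M$ is the Kronecker product $M_1^{\otimes k}$ of the one-variable matrix $M_1=(T(a)^r)_{r\in[0,Q-1],\,a\in\mathbb F_Q}$, because $\prod_i T(a_i)^{r_i}$ factors coordinatewise. It is therefore enough to treat $M_1$, since $(M_1^{-1})^{\otimes k}$ is integral whenever $M_1^{-1}$ is.

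\textbf{Inverting $M_1$.} I compute $M_1^{-1}$ explicitly using character orthogonality on $U_{Q-1}$ (Lemma~\ref{lem:orthogonality_relation}). For $1\le r\le Q-1$ and $r'\in[0,Q-1]$,
\[
\sum_{a\in\mathbb F_Q^\times}T(a)^{r}T(a)^{-r'}=\begin{cases}Q-1,& r\equiv r'\pmod{Q-1},\\ 0,&\text{otherwise}.\end{cases}
\]
The row $r=0$ is the all-ones vector, and the column $a=0$ is $(1,0,\dots,0)^{T}$ under the convention $0^0=1$. From this I would write down the explicit inverse $N$ with entries
\[
N_{0,0}=1,\qquad N_{a,0}=0,\qquad N_{a,r}=\frac{T(a)^{-r}}{Q-1}\quad(a\ne0,\ 1\le r\le Q-1),\qquad N_{0,r}=-\frac{1}{Q-1}\quad(r\ge1),
\]
and then check that $M_1N=I$. (Alternatively, I would confirm invertibility directly: $M_1$ is a Vandermonde matrix in the $Q$ distinct elements of $U$.) The entries of $N$ are units divided by $Q-1$, and $Q-1$ is a $p$-adic unit, so all entries of $N$ have valuation $\ge0$.

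\textbf{Main obstacle.} The only delicate step is getting the bookkeeping of the exponents $0$ and $Q-1$ right. Both exponents act as the trivial character on $\mathbb F_Q^\times$ but differ at $a=0$, and exactly this is what makes $M_1$ nonsingular. I expect verifying $M_1N=I$ through this case split to be the place most prone to error. As a robustness backup, the Vandermonde determinant $\prod_{u<u'}(u'-u)$ over $u,u'\in U$ is a $p$-adic unit, because distinct Teichmüller representatives are distinct mod $p$. By the adjugate formula this already gives that $M_1^{-1}$ is integral, without needing the explicit formula.
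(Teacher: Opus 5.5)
Your architecture is sound: integrality of $M$ gives $\nu_p(\boldsymbol y)\ge\nu_p(\boldsymbol x)$, and an integral $M^{-1}=(M_1^{-1})^{\otimes k}$ gives the reverse inequality. However, the explicit inverse $N$ you write down is wrong in its $a=0$ row. Pair the all-ones row $r=0$ of $M_1$ with a column $r'\ge 1$ of $N$:
\[
(M_1N)_{0,r'}=-\frac{1}{Q-1}+\frac{1}{Q-1}\sum_{a\in\mathbb F_Q^\times}T(a)^{-r'} .
\]
This equals $-\frac{1}{Q-1}$ for $1\le r'\le Q-2$ and $\frac{Q-2}{Q-1}$ for $r'=Q-1$. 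For $Q>2$ both values are nonzero, so $M_1N\neq I$.

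The correct entries are $N_{0,r}=0$ for $1\le r\le Q-2$ and $N_{0,Q-1}=-1$. The $a=0$ row of $M_1^{-1}$ must encode $\mathbf 1_{\{0\}}(a)=1-T(a)^{Q-1}$. Equivalently, it is the block $-\mathbf 1^{T}Z^{-1}$ in the inverse of $M_1=\begin{pmatrix}1&\mathbf 1^{T}\\ 0&Z\end{pmatrix}$, where $Z=(T(a)^r)_{r\in[1,Q-1],\,a\in\mathbb F_Q^\times}$ and $Z^{-1}=\frac{1}{Q-1}(T(a)^{-r})_{a,r}$. The slip is exactly the $0$ versus $Q-1$ bookkeeping you flagged. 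After the correction $N$ is still integral, so nothing downstream breaks.

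Your Vandermonde backup is correct and is by itself a complete proof. Write $\det M_1=\pm\prod(u'-u)$ over pairs of distinct $u,u'\in U$. Each factor is nonzero modulo $p$ because reduction maps $U$ bijectively onto $\mathbb F_Q$, so it is a unit in the local ring $\mathbb Z_p[\xi_{Q-1}]$. The adjugate then makes $M_1^{-1}$ integral.

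This differs from the paper's route. The paper expands $M_0$ (your $M_1$) along the column $a=0$ to get $\det M_0=\det Z$. It then shows $\det Z$ is a unit via $ZZ^*=(Q-1)I$ (character orthogonality), and finishes with $\det M=\det(M_0)^{kQ^{k-1}}$ and the adjugate of the full $M$.

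Your route replaces orthogonality with the mod-$p$ distinctness of Teichm\"uller representatives. Equivalently, $M$ reduces mod $p$ to the evaluation matrix of reduced monomials on $\mathbb F_Q^k$, which is invertible. This is shorter and needs no character sums. The paper's argument stays inside the character-sum toolkit it uses throughout, and its $Z^{-1}=Z^*/(Q-1)$ is the piece your explicit formula was trying to reproduce. Inverting factorwise rather than computing the Kronecker determinant is only a cosmetic difference.
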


\begin{proof}
     We first determine the $p$-adic valuation of $\det(M)$. Let $M_0 = (T(a)^r)_{r,a}$ be a $Q \times Q$ matrix over $\mathbb{Z}_p[\xi_{Q-1}]$ whose rows are indexed by $r\in [0,Q-1]$ and columns by $a\in \mathbb{F}_Q$. Then $M = M_0^{\otimes k}$. Observe that
    $$ M_0=\begin{pmatrix}
     1 & 1 & \cdots & 1\\
     0 &  &  & \\
     \vdots &  & Z & \\
     0 &  &  &
    \end{pmatrix}, $$
    where $Z = (T(a)^r)_{r \in [1, Q-1], a \in \mathbb{F}_Q^\times}$. Thus, $\det(M_0) = \det(Z)$. Let $Z^*= (T(a)^{-r})_{a \in \mathbb{F}_Q^\times, r \in [1, Q-1]}$. Then by the orthogonality relation \eqref{eq:orthogonality_of_two_characters}, $Z Z^* = (Q-1)I$. Thus, $\det(Z)\det(Z^*) = (Q-1)^{Q-1}$. Since $p$ does not divide $Q-1$, we have $\nu_p(\det(Z) \det(Z^*)) = 0$. Because the entries of $Z$ and $Z^*$ lie in the ring of integers $\mathbb{Z}_p[\xi_{Q-1}]$, their determinants are integral, forcing $\nu_p(\det(M_0)) = \nu_p(\det(Z)) = 0$. For the $k$-th tensor power, $\det(M) = \det(M_0)^{k Q^{k-1}}$, so $\nu_p(\det(M)) = 0$.

    Next, since all entries of $M$ are in $\mathbb{Z}_p[\xi_{Q-1}]$, $M$ is an integral matrix. The relation $\boldsymbol{y} = \boldsymbol{x}M$ implies that each coordinate of $\boldsymbol{y}$ is a linear combination of the coordinates of $\boldsymbol{x}$ with integral coefficients. By the properties of the $p$-adic valuation, we obtain $\nu_p(\boldsymbol{y}) \ge \nu_p(\boldsymbol{x})$. 
    
    To establish the reverse inequality, multiply both sides of $\boldsymbol{y} = \boldsymbol{x}M$ on the right by the adjugate matrix $\mathrm{adj}(M)$ to obtain $\boldsymbol{x}\cdot\det(M) = \boldsymbol{y}\cdot\mathrm{adj}(M)$. Since $M$ is integral, $\mathrm{adj}(M)$ is also integral, ensuring $\nu_p(\boldsymbol{y}\cdot \mathrm{adj}(M)) \ge \nu_p(\boldsymbol{y})$. Using $\nu_p(\det(M)) = 0$, we conclude
    $$ \nu_p(\boldsymbol{x}) = \nu_p(\boldsymbol{x})+\nu_p(\det(M)) = \nu_p(\boldsymbol{x}\cdot\det(M)) = \nu_p(\boldsymbol{y}\cdot\mathrm{adj}(M)) \ge \nu_p(\boldsymbol{y}). $$
    Therefore, $\nu_p(\boldsymbol{y}) = \nu_p(\boldsymbol{x})$.
\end{proof}

\begin{remark}
    The preceding proof implies a broader principle: an integral matrix $M$  preserves the $p$-adic valuation of vectors provided that $\nu_p(\det(M)) = 0$.
\end{remark}

\begin{theorem}\label{thm:trace_version_criterion_general_case}
Suppose $C\subseteq\mathbb{F}_{q}^n$ is a $\mathbb{F}_q$-linear code with a generalized generator matrix $G = (g_{ij})_{i\in [k],j\in [n]}$ over $\mathbb{F}_{q^m}$. Then the $p$-adic valuation of $C$ is equal to the minimum value of
\begin{equation}
\label{eq:trace_criterion_objective_value}
    \frac{1}{p-1}\sum_{i = 1}^kS_p(r_i)+\nu_p\left(\sum_{j = 1}^n\prod_{i = 1}^kT(g_{ij})^{r_i} \right)-e,
\end{equation}
where $\boldsymbol{r} \in [0, q^m-1]^k$ with $|\boldsymbol{r}| \equiv 0 \pmod{q-1}$ and $|\boldsymbol{r}| > 0$.
\end{theorem}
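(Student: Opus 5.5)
We will work with a generating-function vector indexed by the message space and then apply Lemma~\ref{lem:transform_preserving_p_adic_valuation}. Write $Q=q^m$. Every codeword of $C$ has the form $c_{\boldsymbol{a}}=\bigl(\mathrm{Tr}_{Q/q}(\sum_i a_ig_{ij})\bigr)_j$ with $\boldsymbol{a}\in\mathbb F_Q^k$. Our first step is to express the weight through additive characters. For $\boldsymbol{a}\in\mathbb F_Q^k$ put
\[
N(\boldsymbol{a})=\sum_{j=1}^n\sum_{\mu\in\mathbb F_q}\xi_p^{\mathrm{Tr}_{q/p}(\mu\,\mathrm{Tr}_{Q/q}(\sum_i a_ig_{ij}))}=q\,(n-\mathrm{wt}(c_{\boldsymbol{a}})).
\]
Since $\mathrm{Tr}_{q/p}(\mu\mathrm{Tr}_{Q/q}(z))=\mathrm{Tr}_{Q/p}(\mu z)$, this equals $\sum_j\sum_{\mu}\psi(\mu\sum_i a_ig_{ij})$, where $\psi$ is the canonical additive character of $\mathbb F_Q$. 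Because $C$ is linear, $\nu_p(C)$ is the minimum of $\nu_p(\mathrm{wt}(c))$ over $c\in C$. Using also the vector with entries $N(\boldsymbol{a})-qn$, we get that $\nu_p(C)+e$ equals $\nu_p$ of the vector $\bigl(N(\boldsymbol{a})-N(\boldsymbol 0)\bigr)_{\boldsymbol{a}}$, or of a suitable modification of it that handles the constant term. We will fix this bookkeeping by a small device. Consider the vector $\boldsymbol{y}=(N(\boldsymbol a))_{\boldsymbol a}$ and separate out the $\boldsymbol r=\boldsymbol 0$ contribution, which equals $qn$ for every $\boldsymbol a$.

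Next we expand each character with Lemma~\ref{lem:fourier_expansion_of_additive_character} over $\mathbb F_Q$. Since $\psi(x_1+\cdots+x_k)=\prod_i\psi(x_i)$, we have
\[
\psi\Bigl(\mu\sum_i a_ig_{ij}\Bigr)=\prod_i\sum_{r_i=0}^{Q-1}\lambda_{r_i}T(\mu)^{r_i}T(a_i)^{r_i}T(g_{ij})^{r_i}.
\]
Summing over $\mu\in\mathbb F_q^\times$ gives $\sum_\mu T(\mu)^{|\boldsymbol r|}=(q-1)$ if $(q-1)\mid|\boldsymbol r|$ and $0$ otherwise. The term $\mu=0$ only contributes a constant $n$. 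Hence
\[
N(\boldsymbol a)=n+(q-1)\sum_{\boldsymbol r:\,(q-1)\mid|\boldsymbol r|}\Bigl(\prod_i\lambda_{r_i}\Bigr)\Bigl(\sum_j\prod_iT(g_{ij})^{r_i}\Bigr)\prod_iT(a_i)^{r_i},
\]
that is, $\boldsymbol y=\boldsymbol x M+(\text{const})$. Here $M$ is the matrix of Lemma~\ref{lem:transform_preserving_p_adic_valuation} with $Q=q^m$, and $x_{\boldsymbol r}=(q-1)\prod_i\lambda_{r_i}\sum_j\prod_iT(g_{ij})^{r_i}$ for admissible $\boldsymbol r$, with $x_{\boldsymbol r}=0$ otherwise. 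The $\boldsymbol r=\boldsymbol0$ entry is $(q-1)n$, which combines with the constant $n$ to give $qn$.

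The remaining step is to remove the constant cleanly, and we expect this to be the main obstacle. Define $\boldsymbol x'$ by setting $x'_{\boldsymbol 0}=0$ and $x'_{\boldsymbol r}=x_{\boldsymbol r}$ otherwise. Then $\boldsymbol x'M=(N(\boldsymbol a)-qn)_{\boldsymbol a}=(-q\,\mathrm{wt}(c_{\boldsymbol a}))_{\boldsymbol a}$, because the row of $M$ at $\boldsymbol r=\boldsymbol0$ is all ones. Lemma~\ref{lem:transform_preserving_p_adic_valuation} then gives $e+\nu_p(C)=\nu_p(\boldsymbol x')$. By Theorem~\ref{Stickelberger's Theorem} (over $\mathbb F_Q$), $\nu_p(\lambda_{r_i})=S_p(r_i)/(p-1)$, and $\nu_p(q-1)=0$. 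So $\nu_p(\boldsymbol x')$ is exactly the minimum of $\frac1{p-1}\sum_iS_p(r_i)+\nu_p(\sum_j\prod_iT(g_{ij})^{r_i})$ over $\boldsymbol r\ne\boldsymbol0$ with $(q-1)\mid|\boldsymbol r|$. Subtracting $e$ yields \eqref{eq:trace_criterion_objective_value}. We also need to check that $C\neq0$ guarantees a finite minimum, which is automatic since $\boldsymbol x'M\ne 0$.
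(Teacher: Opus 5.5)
Your proposal is correct and follows the paper's proof essentially step for step: you write the weight as an additive-character sum, expand via the Teichm\"uller/Gauss-sum expansion over $\mathbb{F}_{q^m}$, and use orthogonality in $\mu\in\mathbb{F}_q$ to impose $(q-1)\mid|\boldsymbol{r}|$. You then express the (scaled) weight vector as $\boldsymbol{x}'M$ and conclude with Lemma~\ref{lem:transform_preserving_p_adic_valuation} and Stickelberger's theorem. The only difference is how the constant term is handled: you zero out the $\boldsymbol{r}=\boldsymbol{0}$ entry after summing over $\mu$ and work with $-q\,\mathrm{wt}$, whereas the paper removes $\boldsymbol{r}=\boldsymbol{0}$ before summing over $x$ and divides by $q$. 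This changes nothing in substance.
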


\begin{remark}
    The proof proceeds by expressing the codeword weight as a sum of canonical additive characters, which expands into a sum involving Gauss sums via Lemma \ref{lem:fourier_expansion_of_additive_character}. Simplifying this expression and applying the $p$-adic valuation-preserving property of the matrix $M$ (Lemma~\ref{lem:transform_preserving_p_adic_valuation}) concludes the argument.
\end{remark}

\begin{proof}
Note that $C$ is the set of $c(\boldsymbol{\alpha})$ for $\boldsymbol{\alpha}\in \mathbb{F}_{q^m}^k$, where
\begin{align*}
c(\boldsymbol{\alpha}) = \begin{bmatrix} \mathrm{Tr}_{q^m/q}\left(\sum_{i = 1}^k\alpha_i g_{i1}\right)& \cdots& \mathrm{Tr}_{q^m/q}\left(\sum_{i = 1}^k\alpha_i g_{in}\right) \end{bmatrix}.
\end{align*}
For $z\in\mathbb F_q^{\times}$, the map
$x\mapsto\xi_p^{\mathrm{Tr}_{q/p}(xz)}$ is a nontrivial additive character. Hence, by the first orthogonality relation (Lemma \ref{lem:orthogonality_relation}), for $z\in \mathbb{F}_q$, we have
\[\sum_{x\in\mathbb F_q}
\xi_p^{\operatorname{Tr}_{q/p}(xz)}
=
\begin{cases}
q,&\text{if }z=0,\\
0,&\text{otherwise}.
\end{cases}\]
Consequently, we have
\begin{equation}\label{eq:weight_formula}
n-\mathrm{wt}(c(\boldsymbol{\alpha})) = \frac{1}{q}\sum_{j = 1}^n\sum_{x\in \mathbb{F}_q}\xi_p^{\mathrm{Tr}_{q/p}(x\mathrm{Tr}_{q^m/q}(\sum_{i = 1}^k\alpha_i g_{ij}))}.    
\end{equation}
The inner sum of RHS in \eqref{eq:weight_formula} can be rewritten as
\begin{align*}
\sum_{x\in \mathbb{F}_q}\xi_p^{\mathrm{Tr}_{q/p}\left(x\mathrm{Tr}_{q^m/q}\left(\sum_{i = 1}^k\alpha_i g_{ij}\right)\right)} &= \sum_{x\in \mathbb{F}_q}\left(\prod_{i = 1}^k\xi_p^{\mathrm{Tr}_{q^m/p}(x\alpha_i g_{ij})}\right)  \quad\qquad\text{by linearity and transitivity of trace}&\\
&= \sum_{x\in \mathbb{F}_q}\left(\prod_{i = 1}^k\sum_{r_i = 0}^{q^m-1}\lambda_{r_i}T(x\alpha_i g_{ij})^{r_i}\right)  \qquad\qquad\qquad\qquad\qquad\text{by Lemma }\ref{lem:fourier_expansion_of_additive_character}&\\
&=\sum_{x\in \mathbb{F}_q}\left(\sum_{\boldsymbol{r}\in [0,q^m-1]^k} \prod_{i=1}^k \left(\lambda_{r_i}T(x\alpha_i g_{ij})^{r_i} \right) \right) \\
&= q+\sum_{\substack{\boldsymbol{r}\in [0,q^m-1]^k\\ \boldsymbol{r}\neq \boldsymbol{0}}}\sum_{x\in \mathbb{F}_q} \prod_{i=1}^k \left(\lambda_{r_i}T(x\alpha_i g_{ij})^{r_i} \right).
\end{align*}
Plugging the above equality into \eqref{eq:weight_formula} yields
\begin{align*}
\mathrm{wt}(c(\boldsymbol{\alpha}))
&=-\frac{1}{q}\sum_{j = 1}^n \sum_{\substack{\boldsymbol{r}\in [0,q^m-1]^k\\ \boldsymbol{r}\neq \boldsymbol{0}}}\sum_{x\in \mathbb{F}_q} \prod_{i=1}^k \left(\lambda_{r_i}T(x\alpha_i g_{ij})^{r_i} \right) \\
&=-\frac{1}{q}\sum_{\substack{\boldsymbol{r}\in [0,q^m-1]^k\\ \boldsymbol{r}\neq \boldsymbol{0}}} \sum_{j = 1}^n\sum_{x\in \mathbb{F}_q}\prod_{i=1}^k \left(\lambda_{r_i}T(x)^{r_i}T(\alpha_i)^{r_i}T(g_{ij})^{r_i}\right)\\
&=-\frac{1}{q}\sum_{\substack{\boldsymbol{r}\in [0,q^m-1]^k\\ \boldsymbol{r}\neq \boldsymbol{0}}} \left(\prod_{i = 1}^k\lambda_{r_i}\right)\left(\prod_{i =1 }^k T(\alpha_i)^{r_i}\right)\sum_{j = 1}^n\sum_{x\in \mathbb{F}_q} \left(\prod_{i=1}^k T(x)^{r_i}\right) \left(\prod_{i =1 }^k T(g_{ij})^{r_i}\right)  \\
&= -\frac{1}{q}\sum_{\substack{\boldsymbol{r}\in [0,q^m-1]^k\\ \boldsymbol{r}\neq \boldsymbol{0}}}\left(\prod_{i = 1}^k\lambda_{r_i}\right) \left(\prod_{i =1 }^kT(\alpha_i)^{r_i}\right) \left(\sum_{j = 1}^n\prod_{i = 1}^kT(g_{ij})^{r_i}\right)\left(\sum_{x\in \mathbb{F}_q}T(x)^{|\boldsymbol{r}|}\right).
\end{align*}
Regarding the term $\sum_{x\in \mathbb{F}_q}T(x)^{|\boldsymbol{r}|}$, we distinguish the following two cases:
\begin{align*}
\sum_{x\in \mathbb{F}_q}T(x)^{|\boldsymbol{r}|} = \begin{cases} 0, & \text{if } |\boldsymbol{r}| \not\equiv 0 \pmod{q-1}, \\ q-1, & \text{if } |\boldsymbol{r}| \equiv 0 \pmod{q-1} \text{ and } |\boldsymbol{r}| > 0. \end{cases}    
\end{align*}
Therefore, the weight of a codeword $c(\boldsymbol{\alpha})$ can be expressed as:
\begin{equation}
\label{eq:weight_formula_2}
    \mathrm{wt}(c(\boldsymbol{\alpha})) = -\frac{q-1}{q}\sum_{\boldsymbol{r}\in \mathcal{R} }\left(\prod_{i = 1}^k\lambda_{r_i}\right) \left(\prod_{i =1 }^kT(\alpha_i)^{r_i}\right) \left(\sum_{j = 1}^n\prod_{i = 1}^kT(g_{ij})^{r_i}\right),
\end{equation}
where the index set is defined by $\mathcal{R} = \{ \boldsymbol{r} \in [0, q^m-1]^k : |\boldsymbol{r}| \equiv 0 \pmod{q-1},|\boldsymbol{r}| > 0 \}$. 

Define two row vectors $\boldsymbol{y} = (y_{\boldsymbol{\alpha}})_{\boldsymbol{\alpha}\in \mathbb{F}_{q^m}^k}$ with $y_{\boldsymbol{\alpha}} = \mathrm{wt}(c(\boldsymbol{\alpha}))$, and $\boldsymbol{x} = (x_{\boldsymbol{r}})_{\boldsymbol{r}\in [0,q^m-1]^k}$ with
\[x_{\boldsymbol{r}} = \begin{cases}
    -\frac{q-1}{q}\left(\prod_{i = 1}^k\lambda_{r_i}\right)\left(\sum_{j = 1}^n\prod_{i = 1}^kT(g_{ij})^{r_i}\right), &\text{if }\boldsymbol{r}\in \mathcal{R},\\
    0,&\text{otherwise}.
\end{cases}\]
Then we have $\boldsymbol{y} = \boldsymbol{x}M$ where $M$ is the matrix appearing in Lemma \ref{lem:transform_preserving_p_adic_valuation} for $Q = q^m$. It follows from that lemma that 
\[
\nu_p(C)=\nu_p(\boldsymbol y)=\nu_p(\boldsymbol x)
       =\min_{\boldsymbol r\in\mathcal R}\nu_p(x_{\boldsymbol r}).
\]
Since $\nu_p\left(-\frac{q-1}{q}\right)=-e$, for $\boldsymbol r\in\mathcal R$, we have
\[
\nu_p(x_{\boldsymbol r})
=
-e+\sum_{i=1}^k\nu_p(\lambda_{r_i})
+\nu_p\left(\sum_{j=1}^n\prod_{i=1}^kT(g_{ij})^{r_i}\right).
\]
By Stickelberger's theorem (Theorem \ref{Stickelberger's Theorem}), the two expression above yields
\[
\nu_p(C)=\min_{\boldsymbol{r}\in\mathcal R}
\left\{
\frac{1}{p-1}\sum_{i=1}^k S_p(r_i)
+
\nu_p\left(
\sum_{j=1}^n\prod_{i=1}^kT(g_{ij})^{r_i}
\right)
-e
\right\}.
\]
Hence, the result follows.
\end{proof}

\begin{remark}
For computational purposes, one may choose a generator matrix with linearly independent rows, thereby minimizing the number of variables in the search over $\boldsymbol r$.
\end{remark}

\begin{corollary}[Generator-matrix version of Ward's criterion]\label{cor:generator_version}
Suppose $C\subseteq\mathbb{F}_{q}^n$ is a linear code with a generator matrix $G = (g_{ij})_{i\in [k],j\in [n]}$. Then the $p$-adic valuation of $C$ is equal to the minimum value of
\[
\frac{1}{p-1}\sum_{i = 1}^kS_p(r_i)+\nu_p\left(\sum_{j = 1}^n\prod_{i = 1}^kT(g_{ij})^{r_i} \right)-e,
\]
where $\boldsymbol{r} \in [0, q-1]^k$ with $|\boldsymbol{r}| \equiv 0 \pmod{q-1}$ and $|\boldsymbol{r}| > 0$.
\end{corollary}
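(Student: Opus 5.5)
This is the case $m=1$ of Theorem~\ref{thm:trace_version_criterion_general_case}. I will specialize that theorem rather than re-run its Fourier argument.

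First, take $m=1$. Then $\mathrm{Tr}_{q^m/q}=\mathrm{Tr}_{q/q}$ is the identity map on $\mathbb F_q$, so the trace code of an $\mathbb F_q$-linear code $\mathscr C\subseteq\mathbb F_q^n$ is $\mathscr C$ itself. A generator matrix $G=(g_{ij})$ of $C$ over $\mathbb F_q$ is therefore a generalized generator matrix of $C$ over $\mathbb F_{q^m}=\mathbb F_q$, and the hypotheses of Theorem~\ref{thm:trace_version_criterion_general_case} hold with $\mathscr C=C$.

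Second, set $q^m=q$ in the conclusion. The index range $[0,q^m-1]^k$ becomes $[0,q-1]^k$, and the constraints $|\boldsymbol r|\equiv 0\pmod{q-1}$, $|\boldsymbol r|>0$ are unchanged. The Teichm\"uller lift $T$ and the Stickelberger valuations $\nu_p(\lambda_i)=S_p(i)/(p-1)$ now refer to $\mathbb F_q$, which matches the convention fixed after Theorem~\ref{Stickelberger's Theorem}. The objective \eqref{eq:trace_criterion_objective_value} is then literally the displayed expression, so the minimum in the theorem equals $\nu_p(C)$, which is the claim.

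No step is a real obstacle. The only thing to confirm is that nothing in the proof of Theorem~\ref{thm:trace_version_criterion_general_case} needs $m\ge 2$. Lemma~\ref{lem:transform_preserving_p_adic_valuation} is applied with $Q=q^m=q$, which is allowed. The expansion of $\xi_p^{\mathrm{Tr}_{q/p}(x\alpha_i g_{ij})}$ via Lemma~\ref{lem:fourier_expansion_of_additive_character} is taken over $\mathbb F_q$, and the transitivity-of-trace step is trivial. Finally, the matrix $M$ there is indexed by all $\boldsymbol\alpha\in\mathbb F_q^k$; this is consistent even when the rows of $G$ are dependent, since redundant rows are allowed and $\boldsymbol y$ simply repeats weights. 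The corollary also confirms the earlier remark that one needs only $k$ exponent variables, one per basis vector, rather than $k(q-1)$ variables coming from all scalar multiples.
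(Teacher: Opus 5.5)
Your proposal is correct and matches the paper's proof, which simply sets $m=1$ in Theorem~\ref{thm:trace_version_criterion_general_case} and notes that $\mathrm{Tr}_{q/q}$ is the identity map. Your additional check that no step of that theorem's proof needs $m\ge 2$, including the case of redundant rows, is sound, though the paper leaves it implicit.
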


\begin{proof}
Set $m=1$ in Theorem~\ref{thm:trace_version_criterion_general_case}.
Then $\mathrm{Tr}_{q/q}$ is the identity map.
\end{proof}

Ward's criterion follows immediately from the above corollary, since any additive spanning set of an $\mathbb F_q$-linear code also spans the code over $\mathbb F_q$, and hence forms the rows of a possibly redundant generator matrix. We call the expression $\sum_{j = 1}^{n}\prod_{i = 1}^kT(g_{ij})^{r_i}$ in \eqref{eq:trace_criterion_objective_value} the \emph{moment term}, which encodes structural information about the code. The following comparison concerns the size of the parameter space rather than the running time of an algorithm. For a \(k\)-dimensional code, the generator-matrix formulation starts with \(q^k\) exponent tuples in \([0,q-1]^k\). After imposing the conditions
\[
|\boldsymbol r|\equiv0\pmod{q-1}
\qquad\text{and}\qquad
|\boldsymbol r|>0,
\]
the number of remaining tuples is exactly $(q^k-1)/(q-1)$, which coincides with the number of one-dimensional subspaces of \(\mathbb F_q^k\). Thus, for a general code, this formulation does not by itself reduce the number of candidates compared with projective enumeration. Its main advantage is structural and depends on whether the moment term can be simplified. For semisimple Abelian codes, character orthogonality forces the moment term to vanish unless the exponents satisfy a zero-sum constraint, which may substantially reduce the feasible set; see the proof of Theorem~\ref{thm:Abelian_code_new}. For degree-restricted generalized Reed--Muller trace codes, the moment term factors into finite-field power sums and yields an explicit system of congruence conditions; see Theorem~\ref{thm:p_density_program_RM_codes}. In the latter case, the principal benefit is not necessarily a reduction in enumeration complexity, but rather the conversion of the divisibility problem into a structured $p$-weight minimization problem from which theoretical bounds can be derived.

\section{Application to Abelian codes}\label{sec:Application_to_Abelian_codes}

The purpose of this section is to show that the trace-code criterion becomes particularly transparent for semisimple Abelian codes. An Abelian code is determined by its spectrum. Using the inverse Fourier transform with the spectrum, we can  obtain a trace representation of the code. When the criterion is applied to this representation, character orthogonality turns the moment term into the zero-sum condition $\sum_{i=1}^k r_i\boldsymbol{s}^{(i)} = \boldsymbol{0}$. Consequently, the $p$-adic valuation is reduced to a $p$-weight minimization problem.

\subsection{Spectrum of Abelian codes}

This subsection provides a self-contained introduction to the spectrum of Abelian codes, laying the foundation for the trace representation (Theorem \ref{thm:trace_representation_of_Abelian_codes}) established in the subsequent subsection.  For more details, we recommend \cite[Section~2]{delsarte1976}. Readers focused on applications may proceed directly to the next subsection.

We always assume $A$ is a finite Abelian group and write $A = \mathbb{Z}/{n_1\mathbb{Z}} \times \cdots \times \mathbb{Z}/{n_h\mathbb{Z}}$. The \emph{group algebra} $\mathbb{F}_q[A]$ is a vector space spanned by the formal elements $\{y^{\boldsymbol{a}}:\boldsymbol{a}\in A\}$. We define the \emph{convolution}  $y^{\boldsymbol{a}}* y^{\boldsymbol{b}} = y^{\boldsymbol{a}+\boldsymbol{b}}$ and extend it bilinearly to the space $\mathbb{F}_q[A]$. The convolution makes $\mathbb{F}_q[A]$ a commutative algebra with identity. The elements in $\mathbb{F}_q[A]$ are of the form  
\[f = \sum_{\boldsymbol{a}\in A}f(\boldsymbol{a})y^{\boldsymbol{a}}, \quad\text{for }f(\boldsymbol{a})\in \mathbb{F}_q.\]
We treat the element $f$ as a function $\boldsymbol{a}\mapsto f(\boldsymbol{a})$. In this point of view, $\mathbb{F}_q[A]$ is an $\mathbb{F}_q$-function space of $A$.

An \textit{Abelian code} $C$ is an ideal of the group algebra $\mathbb{F}_q[A]$, where a function $f\in C$ is treated as a tuple $(f(\boldsymbol{a}))_{\boldsymbol{a}\in A}$. Abelian codes were initially studied by Berman \cite{berman1967theory,berman1967semisimple} and independently by MacWilliams \cite{macwilliams1970binary}. Specifically, $C$ is called a \textit{cyclic code} if $A$ is a cyclic group. 

In the sequel, we always assume that $(q,|A|) = 1$. Under this assumption, the group algebra $\mathbb{F}_q[A]$ is semisimple, which implies that any Abelian code can be written as a direct sum of irreducible Abelian codes. Since $(q,|A|) = 1$, the mapping $\boldsymbol{a}\mapsto q\boldsymbol{a}$ is a permutation on $A$, and an orbit under this permutation is called a \emph{$q$-cyclotomic coset}. We denote $\Omega_{\boldsymbol{a}}$ the orbit containing $\boldsymbol{a}$. Clearly, $A$ is a disjoint union of $q$-cyclotomic cosets. Recall that the character group of $A$ is the set of $\chi_{\boldsymbol{a}}$ for $\boldsymbol{a}\in A$ where each $\chi_{\boldsymbol{a}}$ is the group homomorphism
\begin{align*}
    \chi_{\boldsymbol{a}}:A\to \mathbb{F}_{q^m}^{\times}, \;\text{ by }\;
    \boldsymbol{b} \mapsto \prod_{\ell = 1}^h\xi_{n_\ell}^{a_{\ell}b_{\ell}}.
\end{align*}
Two useful properties of $\chi_{\boldsymbol{a}}$ are that $\chi_{\boldsymbol{a}_1}(\boldsymbol{b})\chi_{\boldsymbol{a}_2}(\boldsymbol{b}) = \chi_{\boldsymbol{a}_1+\boldsymbol{a}_2}(\boldsymbol{b})$ and $\chi_{q\boldsymbol{a}}(\boldsymbol{b}) = \chi_{\boldsymbol{a}}(\boldsymbol{b})^q$.

Let $N$ be the exponent of $A$. Since the exponent $N$ and the order $n$ share the same prime factors, the condition $(q,|A|) = 1$ ensures that $(q,N) = 1$. Let $m = \mathrm{ord}_{N}(q)$, the smallest positive integer such that $q^m\equiv 1\pmod{N}$. Note that $\mathbb{F}_{q^m}$ contains a primitive $N$-th root of unity. This allows us to consider the Fourier transform of functions $A\to \mathbb{F}_{q^m}$.

An \emph{idempotent} $E\in \mathbb{F}_q[A]$ is an element satisfying $E*E = E$. The following result shows that $\mathbb{F}_q[A]$ is a principal ideal ring.

\begin{lemma}
    For any Abelian code $C\subseteq \mathbb{F}_q[A]$, there is a unique idempotent element $E\in \mathbb{F}_q[A]$ such that $C = \langle E \rangle:= \{f*E:f\in \mathbb{F}_q[A]\}$.
\end{lemma}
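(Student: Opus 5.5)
The plan is to use the semisimplicity of $\mathbb{F}_q[A]$, which holds because $(q,|A|)=1$. I would show that $C$ is generated by an idempotent, and that this idempotent is unique.

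\emph{Existence.} Since $(q,|A|)=1$, Maschke's theorem (or the Fourier transform over $\mathbb{F}_{q^m}$ from the Preliminaries, with $|A|$ invertible in $\mathbb{F}_q$) shows that $\mathbb{F}_q[A]$ is semisimple. Hence the ideal $C$ has a complementary ideal $C'$ with $\mathbb{F}_q[A]=C\oplus C'$. Write the identity $y^{\boldsymbol 0}=E+E'$ with $E\in C$ and $E'\in C'$.

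For $f\in C$ we have $f=f*E+f*E'$. Here $f*E'\in C\cap C'=\{0\}$, because both $C$ and $C'$ are ideals. So $f=f*E$, which gives $C\subseteq\langle E\rangle$. The reverse inclusion holds because $E\in C$ and $C$ is an ideal. Taking $f=E$ gives $E*E=E$.

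To avoid invoking Maschke, I can make the decomposition explicit through the Fourier transform. Convolution becomes pointwise multiplication of $\widehat f$ on $A$ (with values in $\mathbb{F}_{q^m}$). I would then define $E$ to be the inverse transform of the indicator function of the set $Z=\{\boldsymbol a:\widehat f(\boldsymbol a)\neq 0 \text{ for some } f\in C\}$.

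\begin{itemize}
\item $Z$ is a union of $q$-cyclotomic cosets, since $\widehat f(q\boldsymbol a)=\widehat f(\boldsymbol a)^q$ for $f$ with values in $\mathbb{F}_q$. Therefore $E$ is $\mathbb{F}_q$-valued.
\item $\widehat E^2=\widehat E$, so $E$ is idempotent.
\item For $f\in C$ we have $\widehat f\,\widehat E=\widehat f$, so $f=f*E$.
\end{itemize}

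The main obstacle in this route is showing $E\in C$. I would build $E$ as an $\mathbb{F}_{q^m}$-combination of convolution products $g*f$ with $f\in C$, using pointwise interpolation on each $\boldsymbol a\in Z$. This gives $E\in C\otimes\mathbb{F}_{q^m}$. Then $E\in C$ follows from $E$ being $\mathbb{F}_q$-valued and from $(C\otimes\mathbb{F}_{q^m})\cap\mathbb{F}_q[A]=C$, which is linear algebra over the field extension.

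\emph{Uniqueness.} Suppose $E_1$ and $E_2$ are idempotents with $\langle E_1\rangle=\langle E_2\rangle=C$. Each idempotent acts as the identity on the ideal it generates: for $x=f*E_1$ we get $x*E_1=f*E_1*E_1=x$. Since $E_2\in C$, this gives $E_2=E_2*E_1$. By the same argument $E_1=E_1*E_2$. Commutativity of $\mathbb{F}_q[A]$ then yields $E_1=E_2$.
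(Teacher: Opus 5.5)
Your proof is correct and follows the paper's argument: Maschke gives a complementary ideal $C'$, you split $1=E+E'$, and $f*E'\in C\cap C'=\{0\}$ gives $f=f*E$ for every $f\in C$ (you get $E*E=E$ by taking $f=E$, where the paper compares components in $1=E^2+E'^2$), and your uniqueness argument $E_1=E_1*E_2=E_2*E_1=E_2$ is the paper's. Your alternative Fourier-transform route that avoids Maschke is also valid, since its only substantive step is the descent $(C\otimes\mathbb{F}_{q^m})\cap\mathbb{F}_q[A]=C$, but the proof does not need it.
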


\begin{proof}
    Since $(q,|A|) = 1$, by Maschke's theorem (see \cite[Theorem 4.1.1]{etingof2011introduction}), for any ideal $C$, there exists an ideal $C'$ such that $\mathbb{F}_q[A] = C\oplus C'$. Therefore, there exist $E\in C, E'\in C'$ such that $1 = E+E'$. Since $E*E'\in C\cap C'$ and $C\cap C' = \{0\}$, $E*E' = 0$. Thus,
    \[1 = 1^2 = (E+E')^2 = E^2+E'^2.\]
    This implies $E^2 = E,E'^2 = E'$ since $\mathbb{F}_q[A] = C\oplus C'$ is a direct sum. Clearly, $C\supseteq \langle E\rangle$. Any function $f\in C$ satisfies $f*E' = 0$. Then,
    \[f = f*(E+E') = f*E.\]
    Hence, $C = \langle E\rangle$.

    For uniqueness, suppose $C = \langle E_1\rangle = \langle E_2\rangle$ where $E_1,E_2$ are idempotents. For any $f\in C$,  $f = g*E_1$ for some $g\in \mathbb{F}_q[A]$, and thus $f*E_1= g*E_1*E_1 = g*E_1 = f$. Therefore,
    \[E_1 = E_1*E_2 = E_2*E_1 = E_2.\]
    This completes the proof.
\end{proof}
 
 It is straightforward to verify the  \emph{convolution property} of the Fourier transform, which says that $\widehat{f_1*f_2} = \widehat{f_1}\widehat{f_2}$. Let $E$ be the idempotent of an Abelian code $C$. Then $ \widehat{E}:A\to \mathbb{F}_{q^m}$ is a $0/1$-function since $\widehat{E}\widehat{E} = \widehat{E*E} = \widehat{E}$. The support of $\widehat{E}$, defined as $\{\boldsymbol{a}\in A:\widehat{E}(\boldsymbol{a}) \neq 0\}$, is termed the \emph{spectrum} of $C$, denoted by $\mathrm{spec}(C)$.

 A function from $A$ to $\mathbb{F}_{q^m}$ is generally not the Fourier transform of some function in $\mathbb{F}_q[A]$. 

 \begin{lemma}
 \label{lem:q_symmetric}
      A function $A\to \mathbb{F}_{q^m}$ lies in $\mathbb{F}_q[A]$ if and only if the Fourier transform $\widehat{f}$ is \emph{$q$-symmetric}, namely, $\widehat{f}(q\boldsymbol{a}) = \widehat{f}(\boldsymbol{a})^q$ for all $\boldsymbol{a}\in A$.
 \end{lemma}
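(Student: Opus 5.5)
The plan is to compare $f$ with its Frobenius twist $f^q$ through the Fourier transform \eqref{eq:fourier_tranform}. Here $f\colon A\to\mathbb{F}_{q^m}$ is a function and $\widehat f$ is computed with the $\mathbb{F}_{q^m}$-valued characters $\chi_{\boldsymbol a}$. The basic observation is that $f$ takes values in $\mathbb{F}_q$ exactly when $f(\boldsymbol b)^q=f(\boldsymbol b)$ for every $\boldsymbol b\in A$. Since the Fourier transform is injective (it has the inverse \eqref{eq:inverse_fourier_transform}, and $|A|\neq 0$ in $\mathbb{F}_{q^m}$ because $(q,|A|)=1$), this is equivalent to $\widehat{f^q}=\widehat f$. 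So the argument comes down to computing $\widehat{f^q}$ in terms of $\widehat f$.

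For this computation I use that Frobenius $x\mapsto x^q$ is a ring homomorphism of $\mathbb{F}_{q^m}$, together with the identity $\chi_{\boldsymbol a}(\boldsymbol b)^q=\chi_{q\boldsymbol a}(\boldsymbol b)$. These give
\[
\widehat f(\boldsymbol a)^q=\sum_{\boldsymbol b\in A} f(\boldsymbol b)^q\,\chi_{\boldsymbol a}(\boldsymbol b)^{-q}=\sum_{\boldsymbol b\in A} f(\boldsymbol b)^q\,\chi_{q\boldsymbol a}(\boldsymbol b)^{-1}=\widehat{f^q}(q\boldsymbol a).
\]
Hence $\widehat{f^q}(q\boldsymbol a)=\widehat f(\boldsymbol a)^q$ for every $\boldsymbol a\in A$.

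Both directions now follow. If $f$ is $\mathbb{F}_q$-valued, then $f^q=f$, and the identity becomes $\widehat f(q\boldsymbol a)=\widehat f(\boldsymbol a)^q$, which is $q$-symmetry. Conversely, suppose $\widehat f$ is $q$-symmetric. Then $\widehat{f^q}(q\boldsymbol a)=\widehat f(\boldsymbol a)^q=\widehat f(q\boldsymbol a)$ for all $\boldsymbol a$. Because $\boldsymbol a\mapsto q\boldsymbol a$ is a permutation of $A$ (as $(q,N)=1$), this gives $\widehat{f^q}=\widehat f$. Injectivity of the transform then yields $f^q=f$, so $f\in\mathbb{F}_q[A]$.

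There is no real obstacle in this argument. The only points that need care are bookkeeping ones: Frobenius has to act on the coefficients as well as on the characters, and the permutation property of multiplication by $q$ has to be invoked explicitly. One could also run the converse through the inverse transform \eqref{eq:inverse_fourier_transform}, using $|A|^q=|A|$ in $\mathbb{F}_q$, but the injectivity argument is cleaner.
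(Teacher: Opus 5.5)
Your proof is correct and is essentially the paper's argument. The forward direction is the same Frobenius computation using $\chi_{\boldsymbol a}(\boldsymbol b)^q=\chi_{q\boldsymbol a}(\boldsymbol b)$; for the converse, the paper only says it ``follows by the inverse Fourier transform and a similar argument,'' whereas you run the same computation through the identity $\widehat{f^q}(q\boldsymbol a)=\widehat f(\boldsymbol a)^q$, injectivity of the transform, and the permutation $\boldsymbol a\mapsto q\boldsymbol a$, which the paper leaves implicit.
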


 \begin{proof}
     Suppose $f\in \mathbb{F}_q[A]$. Then $f(\boldsymbol{a})^q = f(\boldsymbol{a})$ for all $\boldsymbol{a}$. By the definition of the Fourier transform \eqref{eq:fourier_tranform}, for all $\boldsymbol{a}\in A$, we have
     \begin{align*}
         \widehat{f}(q\boldsymbol{a}) &= \sum_{\boldsymbol{b}\in A}f(\boldsymbol{b})\chi_{q\boldsymbol{a}}(\boldsymbol{b})^{-1}\\
         &= \sum_{\boldsymbol{b}\in A}f(\boldsymbol{b})^q\chi_{\boldsymbol{a}}(\boldsymbol{b})^{-q}\\
         &=\left(\sum_{\boldsymbol{b}\in A}f(\boldsymbol{b})\chi_{\boldsymbol{a}}(\boldsymbol{b})^{-1}\right)^q = \widehat{f}(\boldsymbol{a})^q.
     \end{align*}
     The converse follows by the inverse Fourier transform \eqref{eq:inverse_fourier_transform} and a similar argument.
 \end{proof}
 
Consequently, the spectrum of an Abelian code is a union of some $q$-cyclotomic cosets. Conversely, given $S$ a union of $q$-cyclotomic cosets. Let $\mathbf{1}_S$ be the indicator function of $S$, and let $E_S$ be the inverse Fourier transform of $\mathbf{1}_S$. Since $\mathbf{1}_S$ is $q$-symmetric, $E_S\in \mathbb{F}_q[A]$. Applying the convolution property again, we can show that $E_S$ is an idempotent. It can be verified that the following mapping is  a bijection between the set of Abelian codes on $\mathbb{F}_q[A]$ and the collection of unions of $q$-cyclotomic cosets on $A$, say
\begin{align*}
    \{\text{Abelian codes of $\mathbb{F}_q[A]$} \}&\longleftrightarrow \{\text{unions of $q$-cyclotomic cosets of $A$}\},\\
    C &\longmapsto \mathrm{spec}(C),\\
    \langle E_S\rangle &\longmapsfrom S.
\end{align*}

The following lemma characterizes the codewords in an Abelian code in terms of their Fourier transform. We will use it to obtain the trace representation of Abelian codes (Theorem \ref{thm:trace_representation_of_Abelian_codes}).

\begin{lemma}
\label{lem:characterization_of_Abelian_codes}
    Let $C\subseteq \mathbb{F}_q[A]$ be an Abelian code with the spectrum $S$. For any function $f:A\to \mathbb{F}_{q^m}$, $f\in C$ if and only if $\widehat{f}$ is $q$-symmetric with $\mathrm{supp}(\widehat{f})\subseteq S$.
\end{lemma}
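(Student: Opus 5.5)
We prove the two directions separately, combining Lemma~\ref{lem:q_symmetric} with the convolution property $\widehat{f_1*f_2}=\widehat{f_1}\,\widehat{f_2}$. Throughout, $C=\langle E\rangle$ with $E=E_S$ the idempotent, so that $\widehat{E}=\mathbf{1}_S$.

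For the forward direction, let $f\in C$. Then $f\in\mathbb{F}_q[A]$, so Lemma~\ref{lem:q_symmetric} shows that $\widehat f$ is $q$-symmetric. Moreover, by the lemma establishing $C=\langle E\rangle$, every $f\in C$ satisfies $f=f*E$. Taking Fourier transforms gives $\widehat f=\widehat f\cdot\mathbf{1}_S$. Hence $\widehat f$ vanishes outside $S$, i.e.\ $\mathrm{supp}(\widehat f)\subseteq S$.

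For the converse, suppose $f:A\to\mathbb{F}_{q^m}$ has a $q$-symmetric transform with $\mathrm{supp}(\widehat f)\subseteq S$. By Lemma~\ref{lem:q_symmetric} (the "if" part, via the inverse transform \eqref{eq:inverse_fourier_transform}), $f$ takes values in $\mathbb{F}_q$, so $f\in\mathbb{F}_q[A]$. The support condition gives $\widehat f=\widehat f\cdot\mathbf{1}_S=\widehat f\,\widehat E=\widehat{f*E}$. Since the Fourier transform is injective (it is inverted by \eqref{eq:inverse_fourier_transform}, which is valid because $|A|$ is invertible in $\mathbb{F}_{q^m}$, as $(q,|A|)=1$), we conclude $f=f*E\in\langle E\rangle=C$.

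The only point needing care is the identification $\widehat{E}=\mathbf{1}_S$. This holds by definition: $\mathrm{spec}(C)=S$ is the support of the $0/1$-function $\widehat E$. We also need to confirm that the convolution property holds for functions valued in $\mathbb{F}_{q^m}$, not just in $\mathbb{F}_q$. This follows from the character identity $\chi_{\boldsymbol a}(\boldsymbol b+\boldsymbol c)=\chi_{\boldsymbol a}(\boldsymbol b)\chi_{\boldsymbol a}(\boldsymbol c)$ by a direct rearrangement of the double sum, so no real obstacle arises.
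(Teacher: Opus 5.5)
Your proposal is correct and follows essentially the same argument as the paper: Lemma~\ref{lem:q_symmetric} gives membership in $\mathbb{F}_q[A]$, and the convolution property with $\widehat{E_S}=\mathbf{1}_S$ turns $f=f*E_S$ into the support condition $\widehat f=\widehat f\,\mathbf{1}_S$, and back. The only difference is that you write out the forward direction (and the justification of $\widehat{E}=\mathbf{1}_S$) explicitly, which the paper leaves as ``similar''.
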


\begin{proof}
Assume $\widehat{f}$ is $q$-symmetric with $\mathrm{supp}(\widehat{f})\subseteq S$. Then $f\in \mathbb{F}_q[A]$ by Lemma \ref{lem:q_symmetric}. The condition $\mathrm{supp}(\widehat{f})\subseteq S$ implies $\widehat{f} = \widehat{f}\mathbf{1}_{S}$. Applying the inverse Fourier transform to the both sides, we have $f = f*E_S$ and thus $f\in C$. The converse follows by a similar argument.
\end{proof}

\begin{remark}
    In the work of MacWilliams \cite{macwilliams1970binary}, a \emph{zero} of $C$ is a character $\chi_{-\boldsymbol{a}}$ such that $\widehat{f}(\boldsymbol{a}) = 0$ for all $f\in C$. Using Lemma \ref{lem:characterization_of_Abelian_codes}, it can be verified that zero set of $C$ is $\{\chi_{-\boldsymbol{a}}:\boldsymbol{a}\in A\backslash\mathrm{spec}(C)\}$.
\end{remark}

\subsection{Divisibility result of Abelian codes}

In this subsection, we present an equivalent form of the Delsarte--McEliece theorem by applying Theorem \ref{thm:trace_version_criterion_general_case}. In order to do that, we need the following result, which states that any Abelian code can be represented as a trace code \cite[Theorem 2.4]{guneri2008multidimensional}. A similar result can also be found in \cite[Eq.~(2.12)]{delsarte1976}. For a better understanding, we give a proof using the spectrum. The proof idea originates from \cite[p.~202]{delsarte1976}

\begin{theorem}[{\cite[Theorem 2.4]{guneri2008multidimensional}}]
\label{thm:trace_representation_of_Abelian_codes}
Let $C$ be an Abelian code of $\mathbb{F}_q[A]$. Suppose the spectrum of $C$ is $\cup_{i = 1}^k\Omega_{\boldsymbol{s}^{(i)}}$. Then $C$ has a generalized generator matrix $\mathcal{G}$ over $\mathbb{F}_{q^m}$, given by
    \[\mathcal{G} = \left(\chi_{\boldsymbol{s}^{(i)}}(\boldsymbol{j}) \right)_{i\in [k], \boldsymbol{j}\in A}.\]
Equivalently, $C$ can be represented as
\begin{equation}
    \label{eq_1_trace_representation}
        C = \left\{\left(\mathrm{Tr}_{q^m/q}\left(\sum_{i = 1}^k\alpha_i \chi_{\boldsymbol{s}^{(i)}}(\boldsymbol{j})\right)\right)_{\boldsymbol{j}\in A}: \boldsymbol{\alpha}\in \mathbb{F}_{q^m}^k\right\}.
    \end{equation}
\end{theorem}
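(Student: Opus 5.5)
We prove that the set $D$ on the right-hand side of \eqref{eq_1_trace_representation} equals $C$. The approach is to show $D\subseteq C$ using Lemma~\ref{lem:characterization_of_Abelian_codes}, and then to match sizes: each cyclotomic coset contributes exactly as many choices of $\alpha_i$ as its length. Throughout, write $m_i=|\Omega_{\boldsymbol{s}^{(i)}}|$, and recall that $m_i$ is the least positive integer with $q^{m_i}\boldsymbol{s}^{(i)}=\boldsymbol{s}^{(i)}$; in particular $m_i\mid m$.

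\textbf{Step 1: $D\subseteq C$.} Fix $\boldsymbol{\alpha}$ and put $f(\boldsymbol{j})=\mathrm{Tr}_{q^m/q}\bigl(\sum_i\alpha_i\chi_{\boldsymbol{s}^{(i)}}(\boldsymbol{j})\bigr)$. Expanding the trace and using $\chi_{\boldsymbol{s}}(\boldsymbol{j})^{q^t}=\chi_{q^t\boldsymbol{s}}(\boldsymbol{j})$ gives
\[
f(\boldsymbol{j})=\sum_{i=1}^k\sum_{t=0}^{m-1}\alpha_i^{q^t}\chi_{q^t\boldsymbol{s}^{(i)}}(\boldsymbol{j}).
\]
Comparing this with the inverse transform \eqref{eq:inverse_fourier_transform}, we read off $\widehat f$. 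For $\boldsymbol{a}=q^u\boldsymbol{s}^{(i)}$ we get
\[
\widehat f(\boldsymbol{a})=|A|\sum_{t\equiv u \ (\mathrm{mod}\ m_i)}\alpha_i^{q^t},
\]
and $\widehat f(\boldsymbol{a})=0$ when $\boldsymbol{a}\notin S$. Here we use that the cosets $\Omega_{\boldsymbol{s}^{(i)}}$ are distinct, hence disjoint. This formula shows $\widehat f(q\boldsymbol{a})=\widehat f(\boldsymbol{a})^q$ and $\mathrm{supp}(\widehat f)\subseteq S$. By Lemma~\ref{lem:characterization_of_Abelian_codes}, $f\in C$.

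\textbf{Step 2: $C\subseteq D$.} Take $f\in C$. By Lemma~\ref{lem:characterization_of_Abelian_codes}, $\widehat f$ is $q$-symmetric and supported on $S$. Set $\beta_i=\widehat f(\boldsymbol{s}^{(i)})/|A|$; this makes sense because $|A|$ is invertible, as $(q,|A|)=1$. Since $\beta_i^{q^{m_i}}=\beta_i$, we have $\beta_i\in\mathbb{F}_{q^{m_i}}$. $q$-symmetry then gives $\widehat f(q^u\boldsymbol{s}^{(i)})=|A|\beta_i^{q^u}$. The inverse transform therefore yields
\[
f(\boldsymbol{j})=\sum_i\mathrm{Tr}_{q^{m_i}/q}\bigl(\beta_i\chi_{\boldsymbol{s}^{(i)}}(\boldsymbol{j})\bigr).
\]
Now use surjectivity of the trace $\mathrm{Tr}_{q^m/q^{m_i}}$: choose $\alpha_i\in\mathbb{F}_{q^m}$ with $\mathrm{Tr}_{q^m/q^{m_i}}(\alpha_i)=\beta_i$. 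By transitivity of the trace,
\[
\mathrm{Tr}_{q^m/q}\bigl(\alpha_i\chi_{\boldsymbol{s}^{(i)}}(\boldsymbol{j})\bigr)=\mathrm{Tr}_{q^{m_i}/q}\bigl(\beta_i\chi_{\boldsymbol{s}^{(i)}}(\boldsymbol{j})\bigr),
\]
where we use $\chi_{\boldsymbol{s}^{(i)}}(\boldsymbol{j})\in\mathbb{F}_{q^{m_i}}$, since its $q^{m_i}$-th power is $\chi_{q^{m_i}\boldsymbol{s}^{(i)}}(\boldsymbol{j})=\chi_{\boldsymbol{s}^{(i)}}(\boldsymbol{j})$. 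Hence $f\in D$.

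\textbf{Consequence and main obstacle.} $D$ is exactly $\mathrm{Tr}_{q^m/q}$ of the row space of $\mathcal G$, so $\mathcal G$ is a generalized generator matrix of $C$. The main point needing care is Step 2: the coset length $m_i$ may be smaller than $m$, so $\widehat f(\boldsymbol{s}^{(i)})$ only lies in $\mathbb{F}_{q^{m_i}}$. This is handled by the intermediate trace $\mathrm{Tr}_{q^m/q^{m_i}}$ together with the coset-periodicity bookkeeping in Step 1.
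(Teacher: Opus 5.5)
Your proof is correct and is essentially the paper's. Step 2 reproduces the paper's argument for $C\subseteq D$: inverse Fourier transform, $q$-symmetry, $\widehat f(\boldsymbol{s}^{(i)})/|A|\in\mathbb{F}_{q^{m_i}}$, then lifting through the surjective trace $\mathrm{Tr}_{q^m/q^{m_i}}$ and transitivity. Step 1 works out the reverse inclusion that the paper only calls ``a similar argument,'' by reading off $\widehat f$ from the character expansion and applying Lemma~\ref{lem:characterization_of_Abelian_codes}; the size-matching you announce at the start is never used and is not needed, since the two inclusions already give equality.
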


\begin{proof}
We first prove the forward inclusion of \eqref{eq_1_trace_representation}. Let $f\in C$. Then by Lemma \ref{lem:characterization_of_Abelian_codes}, $\widehat{f}$ is $q$-symmetric and $\mathrm{supp}(\widehat{f})\subseteq \cup_{i = 1}^k\Omega_{\boldsymbol{s}^{(i)}}$. Suppose $|\Omega_{\boldsymbol{s}^{(i)}}| = m_i$ for all $i$, so $\Omega_{\boldsymbol{s}^{(i)}}=\{\boldsymbol{s}^{(i)},q\boldsymbol{s}^{(i)},\ldots,q^{m_i-1}\boldsymbol{s}^{(i)}\}$. Then by the inverse Fourier transform \eqref{eq:inverse_fourier_transform}, for all $\boldsymbol{j}\in A$, we have
\begin{align}
    f(\boldsymbol{j}) &= \frac{1}{|A|}\sum_{\boldsymbol{a}\in A}\widehat{f}(\boldsymbol{a})\chi_{\boldsymbol{a}}(\boldsymbol{j})\notag\\
    &=\frac{1}{|A|} \sum_{i=1}^k \sum_{t=0}^{m_i-1} \widehat{f}(q^t\boldsymbol{s}^{(i)})\chi_{\boldsymbol{q^t\boldsymbol{s}^{(i)}}}(\boldsymbol{j})&\text{since $\mathrm{supp}(\widehat{f})\subseteq \cup_{i = 1}^k\Omega_{\boldsymbol{s}^{(i)}}$ }\notag\\
    &= \frac{1}{|A|} \sum_{i=1}^k \sum_{t=0}^{m_i-1} \widehat{f}\left(\boldsymbol{s}^{(i)}\right)^{q^t}\chi_{\boldsymbol{\boldsymbol{s}^{(i)}}}(\boldsymbol{j})^{q^t}&\text{since $\widehat{f}$ is $q$-symmetric }\notag\\
    &= \sum_{i=1}^k \sum_{t=0}^{m_i-1}  \left(\frac{1}{|A|}\widehat{f}\left(\boldsymbol{s}^{(i)}\right)\chi_{\boldsymbol{\boldsymbol{s}^{(i)}}}(\boldsymbol{j})\right)^{q^t}\notag\\
    &= \sum_{i=1}^k \mathrm{Tr}_{q^{m_i}/q}\left(\frac{1}{|A|}\widehat{f}\left(\boldsymbol{s}^{(i)}\right)\chi_{\boldsymbol{\boldsymbol{s}^{(i)}}}(\boldsymbol{j})\right)\label{eq:trace_representation_1},
\end{align}
where the last equality follows by the definition of the trace function and the fact that $\widehat{f}\left(\boldsymbol{s}^{(i)}\right),\chi_{\boldsymbol{\boldsymbol{s}^{(i)}}}(\boldsymbol{j})\in \mathbb{F}_{q^{m_i}}$. We show for example that $\widehat{f}\left(\boldsymbol{s}^{(i)}\right)\in \mathbb{F}_{q^{m_i}}$. Note that $q^{m_i}\boldsymbol{s}^{(i)} = \boldsymbol{s}^{(i)}$. Thus,
\begin{align*}
\widehat{f}\left(\boldsymbol{s}^{(i)}\right)&=\widehat{f}\left(q^{m_i}\boldsymbol{s}^{(i)}\right) = \widehat{f}\left(\boldsymbol{s}^{(i)}\right)^{q^{m_i}}.
\end{align*}
Because the trace map $\mathrm{Tr}_{q^m/q^{m_i}}$ is surjective, we can choose some $\alpha_i\in \mathbb{F}_{q^m}$ such that $\frac{1}{|A|}\widehat{f}(\boldsymbol{s}^{(i)}) = \mathrm{Tr}_{q^m/q^{m_i}}(\alpha_i)$. By the linearity and transitivity of the trace, we obtain
\begin{align}
\mathrm{Tr}_{q^{m_i}/q}\left( \frac{1}{|A|}\widehat{f}\left(\boldsymbol{s}^{(i)}\right)\chi_{\boldsymbol{s}^{(i)}}(\boldsymbol{j})  \right)
    &=  \mathrm{Tr}_{q^{m_i}/q}\left( \mathrm{Tr}_{q^m/q^{m_i}}(\alpha_i) \chi_{\boldsymbol{s}^{(i)}}(\boldsymbol{j}) \right)\notag \\
    &= \mathrm{Tr}_{q^{m_i}/q}\left( \mathrm{Tr}_{q^m/q^{m_i}}(\alpha_i\chi_{\boldsymbol{s}^{(i)}}(\boldsymbol{j}))  \right) \notag\\
    &= \mathrm{Tr}_{q^{m}/q}\left(\alpha_i\chi_{\boldsymbol{s}^{(i)}}(\boldsymbol{j}) \right)\notag.
\end{align}
Substituting the above expression into \eqref{eq:trace_representation_1} yields 
\[f(\boldsymbol{j}) = \mathrm{Tr}_{q^m/q}\left(\sum_{i=1}^k\alpha_i \chi_{\boldsymbol{s}^{(i)}}(\boldsymbol{j})\right).\]
This establishes the forward inclusion for \eqref{eq_1_trace_representation}. The reverse inclusion follows by a similar argument.
\end{proof}

We now apply the trace-code criterion to the generalized generator matrix of Abelian codes to obtain their $p$-divisibility result. Our formulation is as a $p$-weight minimization problem.

\begin{theorem}
\label{thm:Abelian_code_new}
    Let $C$ be an Abelian code in $\mathbb{F}_q[A]$ with the spectrum $\cup_{i = 1}^k\Omega_{\boldsymbol{s}^{(i)}}$. Then the $p$-adic valuation of $C$ is $\Lambda(C)/(p-1)-e$, where $\Lambda(C)$ is given by 
    \begin{equation}
    \label{eq:Abelian_code_p_density}
        \begin{aligned}
        \min\quad & \sum_{i=1}^k S_p(r_i),\\
        \text{s.t.}\quad &\begin{cases}
            \boldsymbol{r} \in [0, q^m-1]^k\backslash\{\boldsymbol{0}\},\\
            |\boldsymbol{r}| \equiv 0 \pmod{q-1},\\
            \sum_{i=1}^k r_i\boldsymbol{s}^{(i)} = \boldsymbol{0} \text{ in $A$}.
        \end{cases}
    \end{aligned}
    \end{equation}
\end{theorem}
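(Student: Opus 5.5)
The plan is to apply Theorem~\ref{thm:trace_version_criterion_general_case} directly to the generalized generator matrix $\mathcal{G}=\left(\chi_{\boldsymbol{s}^{(i)}}(\boldsymbol{j})\right)_{i\in[k],\boldsymbol{j}\in A}$ given by Theorem~\ref{thm:trace_representation_of_Abelian_codes}. This yields
\[
\nu_p(C)=\min_{\boldsymbol r}\left\{\frac{1}{p-1}\sum_{i=1}^k S_p(r_i)+\nu_p\left(\sum_{\boldsymbol{j}\in A}\prod_{i=1}^k T\bigl(\chi_{\boldsymbol{s}^{(i)}}(\boldsymbol{j})\bigr)^{r_i}\right)-e\right\},
\]
where $\boldsymbol r$ ranges over $[0,q^m-1]^k\setminus\{\boldsymbol 0\}$ with $|\boldsymbol r|\equiv 0\pmod{q-1}$. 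Everything then reduces to evaluating the moment term.

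The key observation is that the Teichm\"uller lift restricted to $\mathbb{F}_{q^m}^\times$ is a group isomorphism onto $U_{q^m-1}$. Since each $\chi_{\boldsymbol{s}^{(i)}}(\boldsymbol j)$ is nonzero, we get
\[
\prod_i T\bigl(\chi_{\boldsymbol{s}^{(i)}}(\boldsymbol j)\bigr)^{r_i}=T\bigl(\chi_{\sum_i r_i\boldsymbol{s}^{(i)}}(\boldsymbol j)\bigr).
\]
The map $\boldsymbol j\mapsto T(\chi_{\boldsymbol a}(\boldsymbol j))$ is a $\mathbb{C}_p$-valued character of $A$, nontrivial exactly when $\chi_{\boldsymbol a}$ is nontrivial, i.e.\ when $\boldsymbol a\neq\boldsymbol 0$ (by injectivity of $T$ and because the $\xi_{n_\ell}$ are primitive). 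Lemma~\ref{lem:orthogonality_relation} then gives a moment term of $0$ when $\sum_i r_i\boldsymbol{s}^{(i)}\neq\boldsymbol 0$ and $|A|$ otherwise.

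Since $(q,|A|)=1$, we have $\nu_p(|A|)=0$. The tuples violating the zero-sum condition contribute $\nu_p(0)=\infty$ and drop out of the minimum, while the surviving tuples contribute exactly $\frac{1}{p-1}\sum_i S_p(r_i)-e$. This is precisely $\Lambda(C)/(p-1)-e$ with the stated constraint set.

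One point remains: the feasible set must be nonempty. If it were empty, the minimum would be $\infty$, but $\nu_p(C)$ is finite for a nonzero code. The tuple $r_i=q^m-1$ for a single $i$ (others zero) satisfies all constraints, because $N\mid q^m-1$ kills $\boldsymbol{s}^{(i)}$. So the feasible set is nonempty, consistent with the theorem.

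The only step needing care is the character-identification step. There one must check that composing $\chi_{\boldsymbol a}$ (which is $\mathbb{F}_{q^m}$-valued) with $T$ gives a genuine homomorphism into $\mathbb{C}_p^\times$ that is trivial only for $\boldsymbol a=\boldsymbol 0$. Both facts follow from $T$ being multiplicative and injective on $\mathbb{F}_{q^m}^\times$. No Gauss-sum estimates beyond those already absorbed into Theorem~\ref{thm:trace_version_criterion_general_case} are needed.
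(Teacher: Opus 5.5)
Your proposal is correct and matches the paper's proof: apply Theorem~\ref{thm:trace_version_criterion_general_case} to the generalized generator matrix from Theorem~\ref{thm:trace_representation_of_Abelian_codes}, use multiplicativity of $T$ to turn the moment term into the character sum $\sum_{\boldsymbol{j}\in A}T\bigl(\chi_{\sum_i r_i\boldsymbol{s}^{(i)}}(\boldsymbol{j})\bigr)$, evaluate it as $|A|$ or $0$ by orthogonality, and use $\nu_p(|A|)=0$. Your extra check that the feasible set is nonempty, via $r_i=q^m-1$ for a single $i$, is not in the paper but is a harmless consistency remark.
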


\begin{proof}
    By Theorem \ref{thm:trace_representation_of_Abelian_codes}, $C$ has a generalized generator matrix over $\mathbb{F}_{q^m}$ which is given by
    \[\mathcal{G} = \left(\chi_{\boldsymbol{s}^{(i)}}(\boldsymbol{j}) \right)_{i\in [k], \boldsymbol{j} \in A}.\]
    Applying Theorem \ref{thm:trace_version_criterion_general_case} to the matrix $\mathcal{G}$ of $C$, we see that the $p$-adic valuation of $C$ is equal to the minimum value of
    \begin{align}
        &\frac{1}{p-1}\sum_{i = 1}^kS_p(r_i)+\nu_p\left(\sum_{\boldsymbol{j} \in A}\prod_{i = 1}^k T\left(\chi_{\boldsymbol{s}^{(i)}}(\boldsymbol{j})\right)^{r_i} \right)-e\notag\\
        =& \frac{1}{p-1}\sum_{i = 1}^kS_p(r_i)+\nu_p\left(\sum_{\boldsymbol{j} \in A} T\left(\prod_{i = 1}^k\chi_{\boldsymbol{s}^{(i)}}(\boldsymbol{j})^{r_i}\right) \right)-e\notag\\
         =& \frac{1}{p-1}\sum_{i = 1}^kS_p(r_i)+\nu_p\left(\sum_{\boldsymbol{j} \in A} T\left(\chi_{\sum_{i = 1}^kr_i\boldsymbol{s}^{(i)}}(\boldsymbol{j})\right) \right)-e\label{eq:Abelian_code_our_theorem_1},
    \end{align}
    where $\boldsymbol{r} \in [0, q^m-1]^k$ with $|\boldsymbol{r}| \equiv 0 \pmod{q-1}$ and $|\boldsymbol{r}| > 0$. Note that the composition $T\circ \chi_{\sum_{i = 1}^kr_i\boldsymbol{s}^{(i)}}$ is a character of $A$ over $\mathbb{C}_p$, and is trivial if and only if $\sum_{i = 1}^kr_i\boldsymbol{s}^{(i)} = \boldsymbol{0}\in A$. Therefore, 
    \[\sum_{\boldsymbol{j} \in A} T\left(\chi_{\sum_{i = 1}^kr_i\boldsymbol{s}^{(i)}}(\boldsymbol{j})\right) =\begin{cases}
             |A|,&\text{if $\sum_{i = 1}^kr_i\boldsymbol{s}^{(i)} = \boldsymbol{0}\in A$ },\\
             0,&\text{otherwise}.
         \end{cases}. \]
Since $\nu_p(0) = \infty$, it suffices to consider the cases where $\sum_{i = 1}^kr_i\boldsymbol{s}^{(i)} = \boldsymbol{0}\in A$. In this case, we can simplify the expression in \eqref{eq:Abelian_code_our_theorem_1} as $\frac{1}{p-1}\sum_{i = 1}^kS_p(r_i)-e$ since $(q,|A|) = 1$ and thus $\nu_p(|A|)$  =0. The proof is now complete.
\end{proof}

\subsection{Delsarte--McEliece theorem}

Theorem \ref{thm:Abelian_code_new} characterizes the $p$-divisibility of Abelian codes via a $p$-weight minimization, whereas the following classical Delsarte--McEliece theorem \cite[Theorem 1.1]{delsarte1976} expresses it through the minimal length of a specific sequence.

\begin{theorem}[Delsarte--McEliece, {\cite[Theorem 1.1]{delsarte1976}}]
\label{thm:delsarte_mceliece_original}
Let $C$ be an Abelian code of $\mathbb{F}_q[A]$. Then the $p$-adic valuation of $C$ is $L(C)/(p-1) - e$, where $L(C)$ is the least positive integer such that there exist elements $(\boldsymbol{a}_1,t_1), \dots, (\boldsymbol{a}_L,t_L) \in \mathrm{spec}(C)\times [0,e-1]$ (repetition allowed) satisfying 
\begin{align}
    \sum_{j=1}^L p^{t_j} &\equiv 0 \pmod{q-1}, \label{eq:dm_1}\\
    \sum_{j=1}^L p^{t_j} \boldsymbol{a}_j &= \boldsymbol{0} \quad \text{in } A. \label{eq:dm_2}
\end{align}
\end{theorem}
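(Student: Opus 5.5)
Since Theorem~\ref{thm:Abelian_code_new} already gives $\nu_p(C)=\Lambda(C)/(p-1)-e$, the plan is to show $L(C)=\Lambda(C)$. I will compare feasible solutions of the minimization \eqref{eq:Abelian_code_p_density} with admissible sequences for \eqref{eq:dm_1}--\eqref{eq:dm_2}, using base-$p$ digit expansions. Throughout, write the spectrum as $\bigcup_{i=1}^k\Omega_{\boldsymbol{s}^{(i)}}$ with distinct cosets, and recall that $\Omega_{\boldsymbol{s}^{(i)}}=\{q^u\boldsymbol{s}^{(i)}\}$ with $q^m\boldsymbol{s}^{(i)}=\boldsymbol{s}^{(i)}$.

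\textbf{Step 1: $L(C)\le\Lambda(C)$.} Take a feasible $\boldsymbol r$ and expand each $r_i=\sum_{w=0}^{em-1}d_{i,w}p^w$ in base $p$, so that $S_p(r_i)=\sum_w d_{i,w}$. Write $w=ue+t$ with $t\in[0,e-1]$ and $u\in[0,m-1]$. Then
\[
p^w\boldsymbol{s}^{(i)}=p^t\bigl(q^u\boldsymbol{s}^{(i)}\bigr), \qquad p^w=q^u p^t\equiv p^t \pmod{q-1}.
\]
For each digit, list the pair $(q^u\boldsymbol{s}^{(i)},t)$ exactly $d_{i,w}$ times. This produces a sequence of length $\sum_i S_p(r_i)$ in $\mathrm{spec}(C)\times[0,e-1]$. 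Its sums are $\sum_j p^{t_j}\equiv|\boldsymbol r|\equiv 0\pmod{q-1}$ and $\sum_j p^{t_j}\boldsymbol a_j=\sum_i r_i\boldsymbol s^{(i)}=\boldsymbol 0$. The sequence is nonempty because $\boldsymbol r\ne\boldsymbol 0$, so $L(C)\le\Lambda(C)$.

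\textbf{Step 2: $\Lambda(C)\le L(C)$.} Take a minimal sequence of length $L$. Each term $\boldsymbol a_j$ lies in a unique coset, so $\boldsymbol a_j=q^{u_j}\boldsymbol s^{(i_j)}$ with $u_j\in[0,m-1]$. Define
\[
r_i' = \sum_{j:\, i_j=i} p^{u_je+t_j} .
\]
This gives $\sum_i r_i'\boldsymbol s^{(i)}=\boldsymbol 0$ and $|\boldsymbol r'|\equiv 0\pmod{q-1}$. However, $r_i'$ may exceed $q^m-1$, and $\boldsymbol r'$ is not yet an admissible point. I reduce each $r_i'$ modulo $q^m-1$ to a representative in $[0,q^m-1]$. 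Because $(q^m-1)\boldsymbol s^{(i)}=\boldsymbol 0$ and $q-1\mid q^m-1$, both constraints survive the reduction.

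\textbf{Main obstacle.} The reduction must neither increase the digit sums nor collapse $\boldsymbol r$ to $\boldsymbol 0$.

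For the digit sums I will use the standard fact that $S_p$ is subadditive, $S_p(a+b)\le S_p(a)+S_p(b)$, together with the fact that reduction modulo $p^{em}-1$ does not increase $S_p$. The second holds because $p^{em}\equiv1$ lets the digit in position $em+w$ be folded back into position $w$, and carries only lower the digit sum. Choosing the representative in $[1,q^m-1]$ whenever $r_i'>0$, rather than $0$, we get $S_p(r_i)\le\sum_{j:\,i_j=i}1$. Summing over $i$ gives $\sum_i S_p(r_i)\le L$.

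Nonvanishing is automatic: every $r_i'$ that occurs is positive, so its representative lies in $[1,q^m-1]$. This is exactly why $q^m-1$ is allowed in the range of the minimization, and then $\boldsymbol r\ne\boldsymbol 0$. Hence $\Lambda(C)\le L(C)$, and combining with Step 1 gives $L(C)=\Lambda(C)$. Theorem~\ref{thm:Abelian_code_new} then yields the statement.
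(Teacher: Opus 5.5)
Your proposal is correct, and its overall structure matches the paper's. Both reduce the statement to $\Lambda(C)=L(C)$ via Theorem~\ref{thm:Abelian_code_new}, and your Step~1 is the paper's digit-by-digit construction of a multiset from an optimal $\boldsymbol r$.

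The reverse inequality is where you take a different route. The paper starts from a \emph{minimal} admissible multiset and uses an exchange argument. If some pair $(q^j\boldsymbol s^{(i)},\ell)$ occurs at least $p$ times, it replaces $p$ copies by one copy of $(q^j\boldsymbol s^{(i)},\ell+1)$, or by $(q^{j+1}\boldsymbol s^{(i)},0)$ when $\ell=e-1$. This preserves \eqref{eq:dm_1}--\eqref{eq:dm_2} and shortens the sequence, contradicting minimality. Hence every multiplicity is at most $p-1$, and these multiplicities are read off directly as base-$p$ digits of some $r_i\in[0,q^m-1]$ with $\sum_i S_p(r_i)=L$ exactly.

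You instead take an arbitrary admissible sequence and form the possibly large integers $r_i'$. You then use subadditivity of $S_p$, together with the fact that passing to the representative in $[1,p^{em}-1]$ modulo $p^{em}-1$ never increases $S_p$. The two arguments rest on the same carrying phenomenon. One exchange step is one carry, and the paper's wrap-around at $\ell=e-1$, $j=m-1$ is your folding of digit position $em$ into position $0$.

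Your packaging has two advantages:
\begin{itemize}
\item It does not need minimality of the sequence.
\item Because each term is assigned a single exponent $u_j$, it avoids a bookkeeping issue with cosets of length $m_i<m$. The paper indexes multiplicities of $(q^j\boldsymbol s^{(i)},\ell)$ over all $j\in[0,m-1]$, which counts the same pair several times unless $j$ is restricted to $[0,m_i-1]$.
\end{itemize}
The paper's route gives slightly more structural information in return: every minimal Delsarte--McEliece sequence is literally the digit expansion of a feasible $\boldsymbol r$.
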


The following proposition establishes the equivalence between Theorem \ref{thm:Abelian_code_new} and Theorem \ref{thm:delsarte_mceliece_original}.

\begin{proposition}
\label{prop:equivalence_with_delsarte_mceliece}
Let $C$ be an Abelian code in $\mathbb{F}_q[A]$ with the spectrum $\cup_{i = 1}^k\Omega_{\boldsymbol{s}^{(i)}}$. Then,
\[
\Lambda(C)=L(C).
\]
\end{proposition}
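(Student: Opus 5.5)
We prove the two inequalities $L(C)\le\Lambda(C)$ and $\Lambda(C)\le L(C)$ by translating feasible solutions of one problem into feasible solutions of the other with the same objective value. Throughout, write $r_i=\sum_{u=0}^{em-1} d_{i,u}p^u$ in base $p$, so $S_p(r_i)=\sum_u d_{i,u}$. For $0\le u\le em-1$ write $u=ce+t$ with $t\in[0,e-1]$ and $c\in[0,m-1]$; then $p^u=q^c p^t$.

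\emph{Step 1: $L(C)\le\Lambda(C)$.} Take a feasible $\boldsymbol r$ for \eqref{eq:Abelian_code_p_density} attaining $\Lambda(C)$. For each pair $(i,u)$, list the element $(q^c\boldsymbol s^{(i)},t)\in\Omega_{\boldsymbol s^{(i)}}\times[0,e-1]\subseteq\mathrm{spec}(C)\times[0,e-1]$, repeated $d_{i,u}$ times. This list has length $\sum_i S_p(r_i)=\Lambda(C)$, which is positive because $\boldsymbol r\neq\boldsymbol 0$. It satisfies \eqref{eq:dm_2}, since
\[
\sum_{i,u} d_{i,u}p^t q^c\boldsymbol s^{(i)}=\sum_i r_i\boldsymbol s^{(i)}=\boldsymbol 0 .
\]
It satisfies \eqref{eq:dm_1}, since $q^c\equiv1\pmod{q-1}$ gives
\[
\sum_{i,u} d_{i,u}p^t\equiv\sum_{i,u} d_{i,u}p^u=|\boldsymbol r|\equiv0\pmod{q-1}.
\]

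\emph{Step 2: $\Lambda(C)\le L(C)$.} Take a minimal Delsarte--McEliece sequence $(\boldsymbol a_j,t_j)_{j=1}^{L}$. Each $\boldsymbol a_j$ lies in some $\Omega_{\boldsymbol s^{(i_j)}}$, so $\boldsymbol a_j=q^{c_j}\boldsymbol s^{(i_j)}$ with $c_j\in[0,m-1]$. This is possible because the orbit size divides $m$. Set $u_j=c_je+t_j\in[0,em-1]$ and define the multiplicities $D_{i,u}=\#\{j:i_j=i,\ u_j=u\}$. Put $R_i=\sum_u D_{i,u}p^u$. Then $\sum_i R_i\boldsymbol s^{(i)}=\sum_j p^{t_j}\boldsymbol a_j=\boldsymbol 0$, and $|\boldsymbol R|\equiv\sum_j p^{t_j}\equiv0\pmod{q-1}$. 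We also have $\sum_i S_p(R_i)\le\sum_{i,u}D_{i,u}=L$, because carrying only decreases the digit sum. The issue is that $R_i$ may exceed $q^m-1$, and $\boldsymbol R$ could reduce to $\boldsymbol 0$.

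\emph{Step 3: reduction into range; this is the main obstacle.} First, the exponents only matter modulo $q^m-1$ in $A$. Indeed, $\chi_{q^m\boldsymbol a}=\chi_{\boldsymbol a}$ because $q^m\equiv1\pmod N$, so $(q^m-1)\boldsymbol s^{(i)}=\boldsymbol 0$. Also $q-1$ divides $q^m-1$. We therefore perform carries cyclically: $p^{em}$ is replaced by $p^0$. Each such cyclic carry preserves both congruence conditions and does not increase the digit sum. Repeating, we reach $r_i\in[0,q^m-1]$ with $r_i\equiv R_i\pmod{q^m-1}$ and $\sum_i S_p(r_i)\le L$.

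The remaining danger is $r_i=0$ for all $i$ even though $R_i\neq0$ for some $i$. A nonzero residue class modulo $q^m-1$ can be represented by $q^m-1$ instead of $0$. Concretely, if cyclic reduction of some nonzero $R_i$ produces the residue $0$, we take $r_i=q^m-1$ instead. This keeps the congruences, since $q^m-1\equiv 0$ in the relevant sense. For the digit sum, we use that under cyclic carrying a nonzero positive integer never reaches all-zero digits; it stabilises at all digits $p-1$, i.e.\ at $q^m-1$, with $S_p=em(p-1)$. So $S_p(r_i)\le\sum_u D_{i,u}$ still holds. This gives a feasible $\boldsymbol r\neq\boldsymbol 0$, hence $\Lambda(C)\le L(C)$. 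The step needing care is confirming that cyclic carries never reduce the digit sum of a nonzero integer to $0$, which holds because each carry preserves the residue modulo $p^{em}-1$ and leaves a positive total.
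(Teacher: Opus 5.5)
Your proof is correct and is essentially the paper's argument. Step 1 is exactly the paper's construction of the multiset from the base-$p$ digits of an optimal $\boldsymbol r$, and Steps 2--3 use the paper's carry: $p$ copies of $(q^{c}\boldsymbol s^{(i)},t)$ become one copy at the next position, wrapping around via $q^m\boldsymbol s^{(i)}=\boldsymbol s^{(i)}$. The paper differs in invoking minimality of $L(C)$ to conclude that an optimal sequence already has every multiplicity at most $p-1$, so $\boldsymbol r$ is read off directly, whereas you carry explicitly from an arbitrary valid sequence, which makes the range condition and $\boldsymbol r\neq\boldsymbol 0$ explicit and, by fixing one exponent $c_j$ per term, avoids the double counting a literal reading of the paper's $r_{i,j,\ell}$ would incur for orbits of size less than $m$.
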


\begin{proof}
Note that $\mathrm{spec}(C) = \bigcup_{i = 1}^k \Omega_{\boldsymbol{s}^{(i)}} = \{q^j \boldsymbol{s}^{(i)} : i \in [k], j \in [0, m-1]\}$ (the size of each orbit might not be $m$).

We first show that $\Lambda(C) \ge L(C)$. Let $\boldsymbol{r} = (r_1, \dots, r_k)$ be a feasible solution achieving the minimum value $\Lambda(C)$ for the objective function $\sum_{i=1}^k S_p(r_i)$ subject to the constraints in \eqref{eq:Abelian_code_p_density}. Write the $p$-adic expansion of $r_i$ as
\[
r_i = \sum_{j = 0}^{m-1} \sum_{\ell = 0}^{e-1} r_{i,j,\ell} p^{je+\ell} = \sum_{j = 0}^{m-1} \sum_{\ell = 0}^{e-1} r_{i,j,\ell} p^{\ell} q^{j},
\]
where $r_{i,j,\ell} \in [0, p-1]$. Construct a multiset $\mathcal{M}$ consisting of elements $(q^j\boldsymbol{s}^{(i)}, \ell) \in \mathrm{spec}(C) \times [0, e-1]$, where $1\le i\le k, 0\le j\le m-1 , 0\le \ell\le e-1$ and each element appears exactly $r_{i,j,\ell}$ times. By the feasibility of $\boldsymbol{r}$, the constraint $|\boldsymbol{r}| \equiv 0 \pmod{q-1}$ gives
\[
\sum_{(\boldsymbol{a}, t) \in \mathcal{M}} p^t = \sum_{i = 1}^k \sum_{j = 0}^{m-1} \sum_{\ell = 0}^{e-1} r_{i,j,\ell} p^{\ell} \equiv \sum_{i = 1}^k \sum_{j = 0}^{m-1} \sum_{\ell = 0}^{e-1} r_{i,j,\ell} p^{\ell} q^j = \sum_{i = 1}^k r_i = |\boldsymbol{r}| \equiv 0 \pmod{q-1},
\]
which verifies \eqref{eq:dm_1}. Moreover, the constraint $\sum_{i=1}^k r_i \boldsymbol{s}^{(i)} = \boldsymbol{0}$ implies
\[
\sum_{(\boldsymbol{a}, t) \in \mathcal{M}} p^t \boldsymbol{a} = \sum_{i = 1}^k \sum_{j = 0}^{m-1} \sum_{\ell = 0}^{e-1} r_{i,j,\ell} p^{\ell} (q^j \boldsymbol{s}^{(i)}) = \sum_{i = 1}^k r_i \boldsymbol{s}^{(i)} = \boldsymbol{0} \quad \text{in } A,
\]
verifying \eqref{eq:dm_2}. Hence, $\mathcal{M}$ is a valid multiset for Theorem \ref{thm:delsarte_mceliece_original}. The cardinality of $\mathcal{M}$ is 
\[\sum_{i=1}^k \sum_{j=0}^{m-1} \sum_{\ell=0}^{e-1} r_{i,j,\ell} = \sum_{i=1}^k S_p(r_i) = \Lambda(C),\]
which yields $L(C) \le \Lambda(C)$.

Conversely, we establish $L(C) \ge \Lambda(C)$. Let $\mathcal{M}$ be an optimal multiset for Theorem \ref{thm:delsarte_mceliece_original} with minimal cardinality $L(C)$. If any multiplicity is $p$ or greater, one can replace $p$ copies of $(q^j\boldsymbol{s}^{(i)}, \ell)$ with a single copy of $(q^j\boldsymbol{s}^{(i)}, \ell+1)$ (or $(q^{j+1}\boldsymbol{s}^{(i)}, 0)$ if $\ell = e-1$). This replacement strictly reduces the cardinality of $\mathcal{M}$ while preserving \eqref{eq:dm_1} and \eqref{eq:dm_2}, contradicting the minimality of $L(C)$. Thus, the multiplicity of any element $(q^j\boldsymbol{s}^{(i)}, \ell)$ in $\mathcal{M}$ cannot exceed $p-1$. Let $r_{i,j,\ell} \in [0, p-1]$ denote the multiplicity of $(q^j\boldsymbol{s}^{(i)}, \ell)$ in the optimal multiset $\mathcal{M}$. Define $r_i = \sum_{j=0}^{m-1} \sum_{\ell=0}^{e-1} r_{i,j,\ell} p^{\ell} q^j$ for $i \in [k]$. Reading the previous equations backwards shows that $\boldsymbol{r} = (r_1, \dots, r_k)$ satisfies the constraints in \eqref{eq:Abelian_code_p_density}. Thus, $\boldsymbol{r}$ is a feasible solution yielding an objective value $\sum_{i=1}^k S_p(r_i) = L(C)$, which gives $\Lambda(C) \le L(C)$.

Combining both inequalities yields $\Lambda(C) = L(C)$.
\end{proof}

\begin{remark}
One advantage of using the trace-code criterion to establish the $p$-divisibility of Abelian codes is that it intrinsically restricts each variable $r_i$ to the finite range $[0, q^m-1]$. This contrasts with the unbounded multiplicities allowed for the spectrum tuples in Theorem \ref{thm:delsarte_mceliece_original}.
\end{remark}

\section{Application to Artin--Schreier type equations}\label{sec:artin_schreier}

A polynomial $f\in \mathbb{F}_{q^m}[x_1,\ldots,x_k]$ is \emph{reduced} if every individual degree of $f$ is at most $q^m-1$. In the sequel, all polynomials are assumed to be reduced.

In this section, we will consider the $p$-adic valuation of $N_{q^m}(f=y^q-y)$, the number of solutions $(x_1,\ldots,x_k,y)\in\mathbb{F}_{q^m}^{k+1}$ to the Artin--Schreier type equation $f(x_1,\ldots,x_k)=y^q-y$ for some $f\in\mathbb{F}_{q^m}[x_1,\ldots,x_k]$, where the subscript of $N_{q^m}$ implies that we consider the solutions over the field $\mathbb{F}_{q^m}$. Note that $\mathrm{Tr}_{q^m/q}(f)=0$ if and only if $f=y^q-y$ for some $y\in\mathbb{F}_{q^m}$ (see \cite[Theorem 2.25]{LidlNiederreiter1997}). Thus,
\begin{equation}\label{solutions equivalence}
N_{q^m}(f=y^q-y)=qN_{q^m}(\mathrm{Tr}_{q^m/q}(f)=0).    
\end{equation}
Then it suffices to consider the number of solutions to the equation $\mathrm{Tr}_{q^m/q}\left(f(x_1,\ldots,x_k)\right)=0$. Denote $c(f)$ the evaluation vector of $\mathrm{Tr}_{q^m/q}(f)$ on $\mathbb{F}_{q^m}^k$. We have 
\[\mathrm{wt}(c(f))=q^{mk}-N_{q^m}(\mathrm{Tr}_{q^m/q}(f)=0).\]
This allows us to apply Theorem \ref{thm:trace_version_criterion_general_case}.  

Suppose $f = f_{d_1}+\cdots+f_{d_t}$, where $f_{d_i}$ is a homogeneous part of degree $d_i$ with $d_1>\cdots>d_t$. We then denote
\begin{equation}\label{eq:definition_of_degree_set}
    \mathrm{Deg}(f) = \{d_1,\ldots,d_t\},
\end{equation}
where we use the convention $\mathrm{Deg}(0)=\varnothing$. Let $D\subseteq [0,k(q^m-1)]$. The \textit{generalized degree-restricted Reed--Muller code} over $\mathbb{F}_{q^m}$ of degree set $D$ in $k$ variables, denoted by $RM_{q^m}(D,k)$, is the space of all $k$-variate polynomials $f$ with $\mathrm{Deg}(f)\subseteq D$ evaluated on $\mathbb{F}_{q^m}^k$. Denote $\mathcal{T}_D$ the set of tuples $\boldsymbol{t}\in [0,q^m-1]^k $ with $|\boldsymbol{t}|\in D$.

\begin{theorem}
\label{thm:p_density_program_RM_codes}
Let $D\subseteq[0,k(q^m-1)]$ with nonempty $D\neq \{0\}$. Let $f\in \mathbb{F}_{q^m}[x_1,\ldots,x_k]$ be a polynomial with $\mathrm{Deg}(f)\subseteq D$. Then the $p$-adic valuation of $N_{q^m}(f=y^q-y)$ is at least
    \begin{equation}
    \label{eq:p_density_program}
    \begin{aligned}
        \min\quad &\frac{1}{p-1}\sum_{\boldsymbol{t}\in \mathcal{T}_D}S_p(r_{\boldsymbol{t}}),\\
        \text{s.t.}\quad &\begin{cases}
            \boldsymbol{r}\in [0,q^m-1]^{\mathcal{T}_D},\\
            |\boldsymbol{r}|\equiv 0\pmod{q-1},\\
            \sum_{\boldsymbol{t}\in \mathcal{T}_D}r_{\boldsymbol{t}}t_i \text{ is a positive multiple of } q^m-1,\ 1\le i\le k.
        \end{cases}
    \end{aligned}
    \end{equation}
 Moreover, there exists an $f$ with $\mathrm{Deg}(f)\subseteq D$ achieving this minimum value.
\end{theorem}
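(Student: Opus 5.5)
We view every $f$ with $\mathrm{Deg}(f)\subseteq D$ as a codeword source for the trace code $C_D:=\mathrm{Tr}_{q^m/q}(RM_{q^m}(D,k))$. The generalized generator matrix over $\mathbb{F}_{q^m}$ is the monomial evaluation matrix
\[
G=\bigl(\boldsymbol{x}^{\boldsymbol{t}}\bigr)_{\boldsymbol{t}\in\mathcal{T}_D,\ \boldsymbol{x}\in\mathbb{F}_{q^m}^k}.
\]
Here rows are indexed by $\mathcal{T}_D$ and columns by the points of $\mathbb{F}_{q^m}^k$. Every reduced $f$ with $\mathrm{Deg}(f)\subseteq D$ is an $\mathbb{F}_{q^m}$-combination of these monomials, and conversely. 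Hence $c(f)$ ranges exactly over $C_D$. From $\mathrm{wt}(c(f))=q^{mk}-N_{q^m}(\mathrm{Tr}_{q^m/q}(f)=0)$ and \eqref{solutions equivalence} we get
\[
N_{q^m}(f=y^q-y)=q\bigl(q^{mk}-\mathrm{wt}(c(f))\bigr).
\]
So the $p$-adic valuation of the count is at least $\min\{e+\nu_p(C_D),\,e(mk+1)\}$. It equals $e+\nu_p(\mathrm{wt}(c(f)))$ whenever that quantity is below $e(mk+1)$. The main task is therefore to compute $\nu_p(C_D)$ by Theorem \ref{thm:trace_version_criterion_general_case}.

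Applying Theorem \ref{thm:trace_version_criterion_general_case} with exponents $\boldsymbol{r}=(r_{\boldsymbol{t}})_{\boldsymbol{t}\in\mathcal{T}_D}\in[0,q^m-1]^{\mathcal{T}_D}$, the moment term factors:
\[
\sum_{\boldsymbol{x}\in\mathbb{F}_{q^m}^k}\prod_{\boldsymbol{t}}T(\boldsymbol{x}^{\boldsymbol{t}})^{r_{\boldsymbol{t}}}=\prod_{i=1}^k\sum_{x_i\in\mathbb{F}_{q^m}}T(x_i)^{N_i},\qquad N_i=\sum_{\boldsymbol{t}}r_{\boldsymbol{t}}t_i.
\]
This uses multiplicativity of $T$, with the convention $T(0)^0=1$. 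Each factor equals $q^m-1$ if $N_i$ is a positive multiple of $q^m-1$, equals $q^m$ if $N_i=0$, and is $0$ otherwise. If the product vanishes, the term contributes $\infty$. Otherwise its valuation is $em\cdot\#\{i:N_i=0\}$. Thus $\nu_p(C_D)+e$ is the minimum over feasible $\boldsymbol{r}$ of
\[
\frac{1}{p-1}\sum_{\boldsymbol{t}}S_p(r_{\boldsymbol{t}})+em\cdot\#\{i:N_i=0\}.
\]

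The step I expect to be the main obstacle is handling exponents with some $N_i=0$, which the displayed program \eqref{eq:p_density_program} excludes. My plan is to show these never beat the bound.

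First, if $N_i=0$ for all $i$, then every $\boldsymbol{t}$ with $r_{\boldsymbol{t}}>0$ has $\boldsymbol{t}=\boldsymbol{0}$. Then $|\boldsymbol{r}|>0$ forces $r_{\boldsymbol{0}}\ge q-1$, so its digit sum is at least $e(p-1)$. Moreover the contribution $emk$ already reaches the trivial cap $e(mk+1)$ once combined, so these terms are harmless.

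Second, suppose only the coordinates in a set $Z\neq\varnothing$ have $N_i=0$. Then every $\boldsymbol{t}$ used has $t_i=0$ for $i\in Z$. I will try to modify $\boldsymbol{r}$ into a tuple feasible for \eqref{eq:p_density_program} whose digit-sum increase is at most $(p-1)em|Z|$. For each $i\in Z$, the natural move is to append one monomial with $t_i=q^m-1$ and digit sum $em(p-1)$. However this needs a suitable $\boldsymbol{t}\in\mathcal{T}_D$, which may not exist, so this is where I am least sure. A possible fallback is to bound the program's minimum by $e(mk+1)$ and compare directly. Alternatively, the theorem may intend the lower bound to be understood together with this cap. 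I would check small cases to decide which route works.

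For the ``moreover'' part, take an optimal feasible $\boldsymbol{r}$ of the full criterion, with minimum below the cap. Since $\nu_p(C_D)$ is attained by some codeword, Lemma \ref{lem:transform_preserving_p_adic_valuation} gives a codeword $c(f)$ with $\nu_p(\mathrm{wt}(c(f)))=\nu_p(C_D)$. For this $f$ the count has valuation exactly $e+\nu_p(C_D)$, which matches the program's minimum.
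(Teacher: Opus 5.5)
\textbf{There is a genuine gap.} Your setup matches the paper's: the monomial generalized generator matrix, the factorization of the moment term into one-variable power sums, and the resulting objective $\frac{1}{p-1}\sum_{\boldsymbol t}S_p(r_{\boldsymbol t})+em\cdot\#\{i:N_i=0\}$. But you leave the step you flagged unresolved, and it is essential. Without it you cannot show that $e+\nu_p(C_D)$ is at least the program's minimum, so neither the lower bound nor the attainment claim follows.

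Your fallback of bounding the program by $e(mk+1)$ does not help. A tuple with $1\le|Z|<k$ can have value far below the cap, and it would then undercut the claimed bound. Deferring to ``the cap'' changes the theorem.

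The missing idea is that your proposed move is too restrictive. You do not need a monomial with $t_i=q^m-1$; the factor $q^m-1$ should sit in the exponent $r_{\boldsymbol s}$, not in $\boldsymbol t$. The reduction runs as follows.
\begin{enumerate}
\item Since $D$ contains some $d$ with $1\le d\le k(q^m-1)$, for every $i$ there is $\boldsymbol s\in\mathcal T_D$ with $s_i\ge1$.
\item If $N_i=0$, then necessarily $r_{\boldsymbol s}=0$. Replace it by $r_{\boldsymbol s}=q^m-1$.
\item This adds $q^m-1$ to $|\boldsymbol r|$ and $s_j(q^m-1)$ to each $N_j$. All congruences are preserved, $N_i$ becomes positive, and no positive $N_j$ becomes zero.
\item The digit-sum term rises by exactly $S_p(q^m-1)/(p-1)=em$. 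The penalty $em\cdot\#\{j:N_j=0\}$ drops by at least $em$, so the value does not increase.
\item Iterating removes all zero coordinates and yields a point feasible for \eqref{eq:p_density_program} with no larger value.
\end{enumerate}
This also covers $Z=[k]$, so your separate first case is unnecessary. (That case also silently needed the program minimum to be at most $e(mk+1)$, which you had not shown.)

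For the ``moreover'' part you additionally need $\nu_p(C_D)<emk$ strictly, so that $\nu_p\bigl(q^{mk}-\mathrm{wt}(c(f))\bigr)=\nu_p(\mathrm{wt}(c(f)))$. You assume this (``with minimum below the cap'') but do not prove it. The paper obtains it as follows:
\begin{itemize}
\item Choose at most $k$ tuples in $\mathcal T_D$ whose supports cover $[k]$; these exist by the observation in step 1.
\item Set $r_{\boldsymbol t}=q^m-1$ on them and $0$ elsewhere. This is feasible for the program with value at most $emk$.
\item Hence $\nu_p(C_D)\le emk-e<emk$.
\end{itemize}
With the reduction and this bound in place, $e+\nu_p(C_D)$ equals the program's minimum. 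Your argument via a codeword attaining $\nu_p(C_D)$ then goes through as written.
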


\begin{proof}
    We begin by considering the trace code $C = \mathrm{Tr}_{q^m/q}(RM_{q^m}(D,k))$. A natural generalized generator matrix over $\mathbb{F}_{q^m}$ of $C$ is given by
    \[\mathcal{G} = \left(\prod_{i = 1}^k x_i^{t_i}\right)_{\boldsymbol{t}\in \mathcal{T}_D,\boldsymbol{x}\in\mathbb{F}_{q^m}^k}.\]
    For any polynomial $f \in \mathbb{F}_{q^m}[x_1,\ldots,x_k]$ with $\mathrm{Deg}(f) \subseteq D$, the evaluations of $\mathrm{Tr}_{q^m/q}(f)$ on $\mathbb{F}_{q^m}^k$ form a codeword $c(f) \in C$. 
    By Theorem \ref{thm:trace_version_criterion_general_case}, the minimum $p$-adic valuation of the codeword weights, $\min\nu_p(\mathrm{wt}(c(f)))$, is given by the minimum of
    \begin{align}
    V(\boldsymbol{r}) 
    &= \frac{1}{p-1}\sum_{\boldsymbol{t}\in \mathcal{T}_D}S_p(r_{\boldsymbol{t}})+\nu_p\left(\sum_{(x_1,\ldots,x_k)\in \mathbb{F}_{q^m}^k} \prod_{\boldsymbol{t}\in \mathcal{T}_D} T\left(\prod_{i = 1}^k x_i^{t_i}\right)^{r_{\boldsymbol{t}}} \right)-e \notag\\
    &=\frac{1}{p-1}\sum_{\boldsymbol{t}\in \mathcal{T}_D}S_p(r_{\boldsymbol{t}})+\nu_p\left(\sum_{(x_1,\ldots,x_k)\in \mathbb{F}_{q^m}^k} \prod_{i = 1}^k T(x_i)^{\sum_{\boldsymbol{t}\in \mathcal{T}_D}t_ir_{\boldsymbol{t}}} \right)-e \notag\\
    &=\frac{1}{p-1}\sum_{\boldsymbol{t}\in \mathcal{T}_D}S_p(r_{\boldsymbol{t}})+\nu_p\left(\prod_{i = 1}^k \sum_{x_i\in \mathbb{F}_{q^m}} T(x_i)^{u_i(\boldsymbol{r})} \right)-e \label{eq:objective function}
    \end{align} 
    over all $\boldsymbol{r} \in [0, q^m-1]^{\mathcal{T}_D}$ satisfying $|\boldsymbol{r}| \equiv 0 \pmod{q-1}$ and $|\boldsymbol{r}| > 0$, where $u_i(\boldsymbol{r}):= \sum_{\boldsymbol{t}\in \mathcal{T}_D} t_ir_{\boldsymbol{t}}$.
    
    Observe that the inner sum $\sum_{x\in \mathbb{F}_{q^m}} T(x)^{u}$ evaluates to $q^m$ if $u=0$, to $q^m-1$ if $u > 0$ with $u \equiv 0 \pmod{q^m-1}$, and to $0$ otherwise. Hence, the minimum valuation occurs exactly when $u_i(\boldsymbol{r}) \equiv 0 \pmod{q^m-1}$ for all $1 \le i \le k$. If exactly $\ell$ components of $(u_1(\boldsymbol{r}), \ldots, u_k(\boldsymbol{r}))$ are zero, the product evaluation yields a $p$-adic valuation of $em\ell$ (since $\nu_p(q^m-1)=0$). Thus, the objective function \eqref{eq:objective function} simplifies to
    \[ V(\boldsymbol{r}) = \frac{1}{p-1}\sum_{\boldsymbol{t}\in \mathcal{T}_D}S_p(r_{\boldsymbol{t}})+em\ell-e.\]
    We next demonstrate that the global minimum is always attained when $\ell = 0$. Suppose that $u_i(\boldsymbol{r}) = \sum_{\boldsymbol{t}\in \mathcal{T}_D} t_ir_{\boldsymbol{t}}= 0$ for some $i$, which forces $r_{\boldsymbol{t}} = 0$ for any $\boldsymbol{t} \in \mathcal{T}_D$ with $t_i > 0$ (the existence of $\boldsymbol{t}$ with $t_i>0$ is guaranteed by $D\ne \{0\}$). Let $\boldsymbol{s}\in \mathcal{T}_D$ such that $s_i>0$. We can modify $\boldsymbol{r}$ to $\boldsymbol{r}'$ by setting 
    \[r_{\boldsymbol{t}}'=
    \begin{cases}
    r_{\boldsymbol{t}}, &\text{if } \boldsymbol{t}\ne \boldsymbol{s},\\     
    q^m-1, &\text{if } \boldsymbol{t}= \boldsymbol{s}.
    \end{cases}\]
    This shift preserves both the condition $|\boldsymbol{r}'| =|\boldsymbol{r}|+q^m-1\equiv 0 \pmod{q-1}$ and the congruences $u_j(\boldsymbol{r}') =u_j(\boldsymbol{r})+s_j(q^m-1)\equiv 0 \pmod{q^m-1}$ for all $j\in [k]$. Let $\ell'$ be the number of zeros in the tuple $(u_1(\boldsymbol{r}'),\ldots,u_k(\boldsymbol{r}'))$. Since $u_i(\boldsymbol{r}') = u_i(\boldsymbol{r})+\boldsymbol{s}_i(q^m-1)>0$, we have $\ell'\le \ell-1$. We claim that $V(\boldsymbol{r}')\le V(\boldsymbol{r})$. This is because
    \begin{align*}
        V(\boldsymbol{r}') &=\frac{1}{p-1}\sum_{\boldsymbol{t}\in \mathcal{T}_D}S_p(r_{\boldsymbol{t}}')+em\ell'-e\\
        &=\frac{1}{p-1}\sum_{\boldsymbol{t}\in \mathcal{T}_D}S_p(r_{\boldsymbol{t}})+\frac{S_p(q^m-1)}{p-1}+em\ell'-e\\
        &\le \frac{1}{p-1}\sum_{\boldsymbol{t}\in \mathcal{T}_D}S_p(r_{\boldsymbol{t}})+em+em(\ell-1)-e\\
        &= V(\boldsymbol{r}).
    \end{align*}
 After this replacement, $V(\boldsymbol{r})$ does not increase. By repeating this adjustment, we can eliminate all zero $u_i(\boldsymbol{r})$'s without increasing the overall valuation. Therefore, the minimum value of $\nu_p(\mathrm{wt}(c(f)))$ can be precisely formulated as the following optimization program:
    \begin{equation*}
    \begin{aligned}
       \min\quad &V'(\boldsymbol{r})=\frac{1}{p-1}\sum_{\boldsymbol{t}\in \mathcal{T}_D}S_p(r_{\boldsymbol{t}})-e,\\
        \text{s.t.}\quad &\begin{cases}
            \boldsymbol{r}\in [0,q^m-1]^{\mathcal{T}_D},\\
            |\boldsymbol{r}|\equiv 0\pmod{q-1},\\
            \sum_{\boldsymbol{t}\in \mathcal{T}_D}r_{\boldsymbol{t}}t_i \text{ is a positive multiple of } q^m-1,\ 1\le i\le k.
        \end{cases}
    \end{aligned}
    \end{equation*}
    
    Below, we show that this minimum is strictly bounded away from $emk$. Since $D\neq \{0\}$, there exist distinct tuples $\boldsymbol{t}^{(1)},\ldots,\boldsymbol{t}^{(k')}\in \mathcal{T}_D$ ($1\le k'\le k$) such that the union of their supports covers $[k]$. Consider the specific assignment $\boldsymbol{r}\in [0,q^m-1]^{\mathcal{T}_D}$ where $r_{\boldsymbol{t}} = q^m-1$ if $\boldsymbol{t} = \boldsymbol{t}^{(j)}$ for some $1\le j\le k'$, and $r_{\boldsymbol{t}} = 0$ elsewhere. It is straightforward to verify that $\boldsymbol{r}$ is a feasible solution to the above program. For this configuration, the objective value is at most $ emk-e$, which is strictly less than $emk$. 
    
    Combining this strict upper bound $\min \nu_p(\mathrm{wt}(c(f)))<emk$ with the  identity
    $$N_{q^m}(\mathrm{Tr}_{q^m/q}(f)=0) + \mathrm{wt}(c(f)) = p^{emk},$$
    the properties of non-Archimedean valuations guarantee that
    \[\min\nu_p(N_{q^m}(\mathrm{Tr}_{q^m/q}(f)=0)) = \min \nu_p(\mathrm{wt}(c(f))),\]
    where $f$ ranges over $\mathbb{F}_{q^m}[x_1,\ldots,x_k]$ with $\mathrm{Deg}(f)\subseteq D$. 
    
    Finally, using \eqref{solutions equivalence}, the valuation of the number of solutions to the Artin--Schreier type equation is given by
    \begin{align*}
        \min\nu_p(N_{q^m}(f=y^q-y)) = \nu_p(q) + \min\nu_p(N_{q^m}(\mathrm{Tr}_{q^m/q}(f)=0)) = e+\min V'(\boldsymbol{r}).
    \end{align*}
    This completes the proof.
\end{proof}

\begin{proposition}\label{upper bound}
    For any integer $1\le d\le k(q^m-1)$, there exists a homogeneous polynomial $f\in \mathbb{F}_{q^m}[x_1,\ldots,x_k]$ with degree $d$ such that the $p$-adic valuation of $N_{q^m}(f=y^q-y)$ is at most
    \begin{align*}
        em\left\lceil \frac{k}{d} \right\rceil.    
    \end{align*}
\end{proposition}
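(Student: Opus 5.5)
The plan is to apply Theorem~\ref{thm:p_density_program_RM_codes} with the single degree set $D=\{d\}$. That theorem gives two things: every $f$ with $\mathrm{Deg}(f)\subseteq\{d\}$ has $\nu_p(N_{q^m}(f=y^q-y))$ bounded below by the optimum of \eqref{eq:p_density_program}, and some such $f$ attains this optimum. Its proof also shows that the attained minimum of $\nu_p(N_{q^m}(f=y^q-y))$ is $e+\min V'=\frac{1}{p-1}\min\sum_{\boldsymbol t}S_p(r_{\boldsymbol t})$. So it suffices to exhibit one feasible $\boldsymbol r\in[0,q^m-1]^{\mathcal T_{\{d\}}}$ whose objective value is at most $em\lceil k/d\rceil$.

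If the optimal $f$ is $0$, then $N=q^{mk+1}$ and its valuation $e(mk+1)$ is not below the bound, so this case must be excluded. The proof of Theorem~\ref{thm:p_density_program_RM_codes} already shows the optimum is $<emk+e$. Alternatively, one can check that the optimizing codeword is nonzero, since the zero codeword has valuation $\infty$ in the criterion. In either case the optimizing $f$ is a genuine nonzero homogeneous polynomial of degree $d$, because $\mathrm{Deg}(f)=\{d\}$.

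The construction is as follows. Set $c=\lceil k/d\rceil$ and partition $[k]$ into $c$ nonempty blocks $B_1,\ldots,B_c$, each of size at most $d$; this is possible since $c\,d\ge k$ and $c\le k$. For each block $B_j$, choose $\boldsymbol t^{(j)}\in[0,q^m-1]^k$ with $|\boldsymbol t^{(j)}|=d$ and $t^{(j)}_i\ge1$ for every $i\in B_j$. First put $1$ on each coordinate of $B_j$, which uses $|B_j|\le d$ units. Then distribute the remaining $d-|B_j|$ units over arbitrary coordinates, respecting the cap $q^m-1$; this is possible because $d\le k(q^m-1)$. The resulting tuples are pairwise distinct when $c\ge2$, since $\boldsymbol t^{(j)}$ is positive on all of $B_j$. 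If some coincidence happened after filling, I would instead fill the excess only on coordinates chosen so that distinctness is kept, or simply merge the coinciding tuples, which only lowers the count. Together their supports cover $[k]$.

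Now define $r_{\boldsymbol t}=q^m-1$ for $\boldsymbol t\in\{\boldsymbol t^{(1)},\ldots,\boldsymbol t^{(c)}\}$ and $r_{\boldsymbol t}=0$ otherwise. Feasibility is checked as follows.
\begin{itemize}
\item $|\boldsymbol r|=c(q^m-1)$, which is divisible by $q-1$.
\item For each $i$, $u_i=(q^m-1)\sum_j t^{(j)}_i$, which is a multiple of $q^m-1$ and positive because some block contains $i$.
\end{itemize}
Since $S_p(q^m-1)=em(p-1)$, the objective value is $\frac{1}{p-1}\cdot c\cdot em(p-1)=em\lceil k/d\rceil$. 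This gives the claimed upper bound for the optimal homogeneous $f$ of degree $d$.

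The main point needing care is the bookkeeping in the filling step. One must keep entries at most $q^m-1$ while reaching total degree exactly $d$, keep the tuples distinct, and make sure the attaining $f$ from Theorem~\ref{thm:p_density_program_RM_codes} really is homogeneous of degree $d$ and not $0$. Each of these follows from $1\le d\le k(q^m-1)$ and from the bound on the optimum below $e(mk+1)$.
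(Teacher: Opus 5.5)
Your argument is correct and is essentially the paper's: apply Theorem~\ref{thm:p_density_program_RM_codes} with $D=\{d\}$, and put $r_{\boldsymbol t}=q^m-1$ on $\lceil k/d\rceil$ exponent tuples of degree $d$ whose supports cover $[k]$, which gives objective value $em\lceil k/d\rceil$. The differences are cosmetic: the paper splits into the cases $d\ge k$ (one tuple with all entries positive) and $d<k$ (indicator vectors of $\lceil k/d\rceil$ distinct $d$-subsets covering $[k]$), whereas your padded blocks handle both at once, and you also check that the attaining $f$ is nonzero, which the paper leaves implicit; moreover, your tuples are automatically distinct, because two blocks of a partition of $[k]$ into $\lceil k/d\rceil$ parts of size at most $d$ always have combined size exceeding $d$.
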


\begin{proof}
    We show this by providing some feasible $\boldsymbol{r}$ to the program \eqref{eq:p_density_program} when $D = \{d\}$.

    First, if $d\ge k$, then there exists a tuple $(t_1,\ldots,t_k)$ such that $1\le t_i\le q^m-1$ and $\sum_{i=1}^k t_i=d$. Take such a tuple $(t_1,\ldots,t_k)$ and set
    \begin{align*}
        r_{\boldsymbol{t}}=\begin{cases}
        q^m-1,\text{ if }\boldsymbol{t}=(t_1,\ldots,t_k),\\
        0,\text{ otherwise.}
        \end{cases}    
    \end{align*}
    Since $|\boldsymbol{r}|=\sum_{\boldsymbol{t}\in\mathcal{T}_{\{d\}}}r_{\boldsymbol{t}}=q^m-1\equiv 0\pmod{q-1}$ and $\sum_{\boldsymbol{t}\in\mathcal{T}_{\{d\}}}r_{\boldsymbol{t}}t_i=t_i(q^m-1)$ is a positive multiple of $q^m-1$ for $1\le i\le k$, such $\boldsymbol{r}$ satisfies \eqref{eq:p_density_program}. Thus, by Theorem \ref{thm:p_density_program_RM_codes}, there exists a homogeneous polynomial $f$ of degree $d$ such that the $p$-adic valuation of $N_{q^m}(f = y^q-y)$ is at most
    \begin{align*}
        \frac{1}{p-1}\sum_{\boldsymbol{t}\in\mathcal{T}_{\{d\}}} S_p(r_{\boldsymbol{t}})=\frac{1}{p-1}S_p(q^m-1)=em=em\left\lceil \frac{k}{d} \right\rceil.    
    \end{align*}
    This shows the existence of such $f$ when $d\ge k$.

    Next, suppose $d<k$. We define $s = \left\lceil \frac{k}{d} \right\rceil$. Since $s \cdot d \ge k$ and $d < k$, we can choose $s$ distinct $d$-element subsets $B_1, \ldots, B_s\subseteq[k]$ such that $\bigcup_{j=1}^s B_j = [k]$. For each $1 \le j \le s$, we construct a tuple $\boldsymbol{t}^{(j)} = (t^{(j)}_1, \ldots, t^{(j)}_k)$ as an indicator vector for the subset $B_j$. Since $|\boldsymbol{t}^{(j)}|=|B_j| = d$, each $\boldsymbol{t}^{(j)}$ lies in $\mathcal{T}_{\{d\}}$. We then define $\boldsymbol{r}$ by
    \begin{align*}
        r_{\boldsymbol{t}}=\begin{cases}
        q^m-1, & \text{if } \boldsymbol{t} = \boldsymbol{t}^{(j)} \text{ for } 1 \le j \le s,\\
        0, & \text{otherwise.}
        \end{cases}
    \end{align*}
    It is easy to verify that such $\boldsymbol{r}$ satisfies \eqref{eq:p_density_program}. Therefore, by Theorem \ref{thm:p_density_program_RM_codes}, there exists a homogeneous polynomial $f$ of degree $d$ whose $p$-adic valuation is bounded above by
    \begin{align*}
        \frac{1}{p-1}\sum_{\boldsymbol{t}\in\mathcal{T}_{\{d\}}} S_p(r_{\boldsymbol{t}}) = \frac{1}{p-1}\sum_{j=1}^s S_p(q^m-1)
        = s \cdot \frac{em(p-1)}{p-1}  = em \left\lceil \frac{k}{d} \right\rceil.
    \end{align*}
    This completes the proof.
\end{proof}

\begin{theorem}
\label{thm:homogeneous_polynomial}
    Let $f\in\mathbb{F}_{q^m}[x_1,\ldots,x_k]$ be a homogeneous polynomial of degree $d\ge 1$. Then the $p$-adic valuation of $N_{q^m}(f=y^q-y)$ is at least  
    \[ \left\lceil  \frac{em}{\left(d,\frac{q^m-1}{q-1}\right)}\left\lceil\frac{\left(d,\frac{q^m-1}{q-1}\right)\cdot k}{d}\right\rceil \right\rceil . \]
    When $\left(d,\frac{q^m-1}{q-1}\right)=1$, for any homogeneous polynomial $f$ of degree $d$, we have 
    \begin{align*}
    \nu_p\left( N_{q^m}(f=y^q-y) \right) \ge em\left\lceil\frac{k}{d}\right\rceil ,    
    \end{align*}
    and there exists such a homogeneous polynomial $f$ achieving this lower bound. 
\end{theorem}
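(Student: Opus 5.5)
We apply Theorem~\ref{thm:p_density_program_RM_codes} with $D=\{d\}$. It then suffices to show that every feasible $\boldsymbol r$ of the program \eqref{eq:p_density_program} satisfies
\[
\frac{1}{p-1}\sum_{\boldsymbol t\in\mathcal T_{\{d\}}}S_p(r_{\boldsymbol t})\ \ge\ \frac{em}{g}\left\lceil\frac{gk}{d}\right\rceil,
\qquad g:=\left(d,\tfrac{q^m-1}{q-1}\right).
\]
Since the left-hand side is an integer (the valuation bound holds for an integer quantity, and $\sum S_p(r_{\boldsymbol t})/(p-1)$ is the valuation of a number of solutions up to the shift already built in), we may then take the outer ceiling.

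\textbf{Step 1: a size bound on $|\boldsymbol r|$.} Let $\boldsymbol r$ be feasible and write $u_i=\sum_{\boldsymbol t}r_{\boldsymbol t}t_i$. Since every $\boldsymbol t\in\mathcal T_{\{d\}}$ has $|\boldsymbol t|=d$, we get
\[
\sum_{i=1}^k u_i=d|\boldsymbol r|.
\]
Each $u_i$ is a positive multiple of $q^m-1$, so $d|\boldsymbol r|\ge k(q^m-1)$ and $d|\boldsymbol r|\equiv 0\pmod{q^m-1}$.

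Write $|\boldsymbol r|=(q-1)s$, using the constraint $|\boldsymbol r|\equiv0\pmod{q-1}$. Then $ds\equiv 0\pmod{(q^m-1)/(q-1)}$, so $s$ is a multiple of $\frac{q^m-1}{(q-1)g}$. Hence $|\boldsymbol r|=j\,\frac{q^m-1}{g}$ for some integer $j$. The inequality $d|\boldsymbol r|\ge k(q^m-1)$ then gives $j\ge\lceil gk/d\rceil$. In summary, every feasible $\boldsymbol r$ satisfies
\[
|\boldsymbol r|\ \ge\ \left\lceil\frac{gk}{d}\right\rceil\frac{q^m-1}{g}.
\]

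\textbf{Step 2: from $|\boldsymbol r|$ to digit sums, by cyclic rotation.} This is the main step, since a direct comparison of $S_p(n)$ with $n$ fails: $n=p^{em-1}$ has digit sum $1$ but is large. For $n\in[0,q^m-1]$, let $\rho(n)$ be the cyclic rotation of the $em$ base-$p$ digits of $n$. Then $\rho(n)\equiv pn\pmod{q^m-1}$, $\rho$ preserves both $S_p$ and zero/nonzero, and summing over all rotations gives
\[
\sum_{i=0}^{em-1}\rho^i(n)=S_p(n)\,\frac{q^m-1}{p-1}.
\]

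Now apply $\rho^i$ coordinatewise to a feasible $\boldsymbol r$. The rotated vector is again feasible:
\begin{itemize}
\item each $u_i$ is multiplied by $p^i$ modulo $q^m-1$, so $u_i\equiv 0\pmod{q^m-1}$ is preserved;
\item positivity of each $u_i$ is preserved, because the supports of the entries are unchanged;
\item $|\boldsymbol r|$ is multiplied by $p^i$ modulo $q-1$, so $|\boldsymbol r|\equiv0\pmod{q-1}$ is preserved.
\end{itemize}
Therefore Step 1 applies to each rotation. Summing over the $em$ rotations yields
\[
\frac{q^m-1}{p-1}\sum_{\boldsymbol t}S_p(r_{\boldsymbol t})
=\sum_{i=0}^{em-1}|\rho^i(\boldsymbol r)|
\ \ge\ em\left\lceil\frac{gk}{d}\right\rceil\frac{q^m-1}{g}.
\]
This is exactly the claimed bound.

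\textbf{Step 3: the coprime case.} When $g=1$, the bound becomes $em\lceil k/d\rceil$. Proposition~\ref{upper bound} supplies a homogeneous $f$ of degree $d$ whose valuation is at most this value, so the lower bound is attained. The only care needed is in Step~2: we must check that the rotation identity holds at the endpoint $n=q^m-1$ (all digits equal to $p-1$, which is fixed by $\rho$), and that the positivity of the $u_i$ survives rotation.
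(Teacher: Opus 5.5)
Your proposal is correct and follows essentially the same route as the paper. You specialize Theorem~\ref{thm:p_density_program_RM_codes} to $D=\{d\}$, use the $p$-ary cyclic shift to rewrite $\frac{1}{p-1}\sum S_p(r_{\boldsymbol t})$ as $\frac{1}{q^m-1}\sum_h|\tau^h(\boldsymbol r)|$, bound each rotated sum as a multiple of $(q^m-1)/g$ that is at least $k(q^m-1)/d$, and cite Proposition~\ref{upper bound} for sharpness. Your derivation via $|\boldsymbol r|=(q-1)s$ just repackages the paper's lcm computation, and the outer ceiling is justified either because $\nu_p(N)$ is an integer or because $S_p(n)\equiv n\pmod{p-1}$ makes every feasible objective value an integer.
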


\begin{proof}
    By Theorem \ref{thm:p_density_program_RM_codes}, the $p$-adic valuation of $N_{q^m}(f=y^q-y)$ is at least
    \begin{equation*}
    \begin{aligned}
        \min\quad &V(\boldsymbol{r})=\frac{1}{p-1}\sum_{\boldsymbol{t}\in \mathcal{T}_{\{d\}}}S_p(r_{\boldsymbol{t}}),\\
        \text{s.t.}\quad &\begin{cases}
            \boldsymbol{r}\in [0,q^m-1]^{\mathcal{T}_{\{d\}}},\\
            |\boldsymbol{r}|\equiv 0\pmod{q-1},\\
            \sum_{\boldsymbol{t}\in \mathcal{T}_{\{d\}}}r_{\boldsymbol{t}}t_i \text{ is a positive multiple of } q^m-1,\ 1\le i\le k.
        \end{cases}
    \end{aligned}
    \end{equation*}
    Define the $p$-ary cyclic shift 
    \begin{align*}
    \tau:[0, q^m-1]&\to [0, q^m-1],  \\   
     \sum_{i=0}^{em-1}x_i p^i&\mapsto x_{em-1}+\sum_{i=0}^{em-2}x_i p^{i+1},
    \end{align*}
    where $0\le x_i\le p-1$ for $0\le i\le em-1$. Clearly, $\tau(x)\equiv px\pmod{q^m-1}$. For the tuple $\boldsymbol{r}$, we apply this map component-wise, namely, $\tau(\boldsymbol{r})=(\tau(r_{\boldsymbol{t}}))_{\boldsymbol{t}\in\mathcal{T}_{\{d\}}}$. We have
    \begin{equation}
    \label{eq:homogeneous_lower_bound}
    \begin{aligned}
         V(\boldsymbol{r}) = \frac{1}{p-1}\sum_{\boldsymbol{t}\in \mathcal{T}_{\{d\}}}S_p(r_{\boldsymbol{t}}) = \frac{1}{p^{em}-1}\sum_{\boldsymbol{t}\in \mathcal{T}_{\{d\}}}\sum_{h = 0}^{em-1}\tau^h(r_{\boldsymbol{t}})= \frac{1}{q^m-1}\sum_{h = 0}^{em-1}\sum_{\boldsymbol{t}\in \mathcal{T}_{\{d\}}}\tau^h(r_{\boldsymbol{t}}),
    \end{aligned}
    \end{equation}
    where the second equality follows by the digit sum identity $\sum_{h=0}^{em-1} \tau^h(x) = \frac{p^{em}-1}{p-1} S_p(x)$.

    We next provide a lower bound for the term $\sum_{\boldsymbol{t}\in \mathcal{T}_{\{d\}}}\tau^h(r_{\boldsymbol{t}})$ for $0\le h\le em-1$. For any $1\le i\le k$, since $\sum_{\boldsymbol{t}\in \mathcal{T}_{\{d\}}}r_{\boldsymbol{t}}t_i > 0$ and $\tau(x)=0$ if and only if $x=0$, the sum $\sum_{\boldsymbol{t}\in \mathcal{T}_{\{d\}}} \tau^h(r_{\boldsymbol{t}}) t_i$ is strictly positive. Moreover, this sum is a multiple of $q^m-1$ since
    $$\sum_{\boldsymbol{t}\in \mathcal{T}_{\{d\}}}\tau^h(r_{\boldsymbol{t}})t_i \equiv p^h \sum_{\boldsymbol{t}\in \mathcal{T}_{\{d\}}}r_{\boldsymbol{t}}t_i \equiv 0 \pmod{q^m-1}.$$
    Note that
    \[ d \sum_{\boldsymbol{t}\in \mathcal{T}_{\{d\}}} \tau^h(r_{\boldsymbol{t}}) = \sum_{\boldsymbol{t}\in \mathcal{T}_{\{d\}}} \tau^h(r_{\boldsymbol{t}}) |\boldsymbol{t}| = \sum_{i=1}^k \left( \sum_{\boldsymbol{t}\in \mathcal{T}_{\{d\}}} \tau^h(r_{\boldsymbol{t}}) t_i \right). \]
 Therefore, there exists some $k'\ge k$ such that 
    \[ d \sum_{\boldsymbol{t}\in \mathcal{T}_{\{d\}}} \tau^h(r_{\boldsymbol{t}})= k'(q^m-1) \implies \frac{d}{(d,q^m-1)} \sum_{\boldsymbol{t}\in \mathcal{T}_{\{d\}}} \tau^h(r_{\boldsymbol{t}})= k'\cdot\frac{q^m-1}{(d,q^m-1)}. \]
Since $\left(\frac{d}{(d,q^m-1)}, \frac{q^m-1}{(d,q^m-1)}\right)=1$, the term $\sum_{\boldsymbol{t}\in \mathcal{T}_{\{d\}}} \tau^h(r_{\boldsymbol{t}})$ is a multiple of $\frac{(q^m-1)}{(d,q^m-1)}$. Note also that $\sum_{\boldsymbol{t}\in\mathcal{T}_{\{d\}}}\tau^h(r_{\boldsymbol{t}})$ is a multiple of $q-1$ since
\[\sum_{\boldsymbol{t}\in\mathcal{T}_{\{d\}}}\tau^h(r_{\boldsymbol{t}})\equiv p^h\sum_{\boldsymbol{t}\in\mathcal{T}_{\{d\}}}r_{\boldsymbol{t}}\equiv0\pmod{q-1}.\]
Therefore, $\sum_{\boldsymbol{t}\in \mathcal{T}_{\{d\}}} \tau^h(r_{\boldsymbol{t}})$ is a multiple of $\mathrm{lcm}\left(\frac{q^m-1}{(d,q^m-1)},q-1\right)$, and thus
    \begin{align*}
    \sum_{\boldsymbol{t}\in \mathcal{T}_{\{d\}}} \tau^h(r_{\boldsymbol{t}}) &=\frac{k'(q^m-1)}{\mathrm{lcm}\left(\frac{q^m-1}{(d,q^m-1)},q-1\right)d}\cdot\mathrm{lcm}\left(\frac{q^m-1}{(d,q^m-1)},q-1\right)\\
    &\ge\left\lceil\frac{k(q^m-1)}{\mathrm{lcm}\left(\frac{q^m-1}{(d,q^m-1)},q-1\right)d}\right\rceil\mathrm{lcm}\left(\frac{q^m-1}{(d,q^m-1)},q-1\right). 
    \end{align*}
    By expressing $q-1 = \frac{q^m-1}{(q^m-1)/(q-1)}$ and applying the identity $\mathrm{lcm}\left(\frac{N}{a},\frac{N}{b}\right)=\frac{N}{(a,b)}$, we can simplify the above inequality as 
    \begin{equation}
    \begin{aligned}
    \label{eq:homogeous_lower_bound_2}
        \sum_{\boldsymbol{t}\in \mathcal{T}_{\{d\}}} \tau^h(r_{\boldsymbol{t}})\ge\left\lceil\frac{\left(d,\frac{q^m-1}{q-1}\right)k}{d}\right\rceil\frac{q^m-1}{\left(d,\frac{q^m-1}{q-1}\right)}.  
    \end{aligned}
    \end{equation}
    The first assertion follows from \eqref{eq:homogeneous_lower_bound} and \eqref{eq:homogeous_lower_bound_2}, while the existence result in the second assertion is a direct consequence of Proposition \ref{upper bound}.
\end{proof}

\begin{remark}
In fact, we can relax the condition $\left(d,\frac{q^m-1}{q-1}\right)=1$ to be $\left\lceil\frac{\left(d,\frac{q^m-1}{q-1}\right)\cdot k}{d}\right\rceil=\left(d,\frac{q^m-1}{q-1}\right)\left\lceil\frac{k}{d}\right\rceil$ such that the conclusion holds. Indeed, there are some examples such that $\left\lceil\frac{\left(d,\frac{q^m-1}{q-1}\right)\cdot k}{d}\right\rceil=\left(d,\frac{q^m-1}{q-1}\right)\left\lceil\frac{k}{d}\right\rceil$ but $\left(d,\frac{q^m-1}{q-1}\right)\ne1$, e.g., $q=3$, $m=4$, $k=14$ and $d=15$.    
\end{remark}

Below, we consider the $p$-adic valuation of $N_{q^m}(f = y^q-y)$ for  arbitrary, not necessarily homogeneous polynomials $f$.

\begin{theorem}
\label{thm:non_homogeneous_first_bound}
    Let $f\in\mathbb{F}_{q^m}[x_1,\ldots,x_k]$ be a polynomial of degree $d\ge 1$. Then the $p$-adic valuation of $N_{q^m}(f=y^q-y)$ is at least  
    \[ \left\lceil \frac{em(q-1)}{q^m-1} \left\lceil \frac{k(q^m-1)}{d(q-1)} \right\rceil \right\rceil. \]
\end{theorem}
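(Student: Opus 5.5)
We follow the proof of Theorem~\ref{thm:homogeneous_polynomial}, now with the degree set $D=[1,d]$ (and possibly $0$, which contributes nothing to the congruences on the $t_i$). By Theorem~\ref{thm:p_density_program_RM_codes}, $\nu_p(N_{q^m}(f=y^q-y))$ is at least the minimum of $V(\boldsymbol r)=\frac{1}{p-1}\sum_{\boldsymbol t\in\mathcal T_D}S_p(r_{\boldsymbol t})$ over feasible $\boldsymbol r$. If $0\in D$, the tuple $\boldsymbol t=\boldsymbol 0$ only enlarges $V$ and does not enter the constraints on the $t_i$, so we may assume $r_{\boldsymbol 0}=0$. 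Using the $p$-ary cyclic shift $\tau$ and the digit-sum identity, we rewrite, exactly as in \eqref{eq:homogeneous_lower_bound},
\[
V(\boldsymbol r)=\frac{1}{q^m-1}\sum_{h=0}^{em-1}\sum_{\boldsymbol t\in\mathcal T_D}\tau^h(r_{\boldsymbol t}).
\]
So it suffices to bound each inner sum $R_h:=\sum_{\boldsymbol t}\tau^h(r_{\boldsymbol t})$ from below by a common quantity.

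First, $R_h\equiv p^h|\boldsymbol r|\equiv 0\pmod{q-1}$, since $\tau(x)\equiv px\pmod{q^m-1}$ and $q-1\mid q^m-1$. Second, for each $i$, the sum $\sum_{\boldsymbol t}\tau^h(r_{\boldsymbol t})t_i$ is positive and divisible by $q^m-1$, hence at least $q^m-1$. Summing over $i$ and using $|\boldsymbol t|\le d$ gives
\[
dR_h\ \ge\ \sum_{\boldsymbol t}\tau^h(r_{\boldsymbol t})|\boldsymbol t|\ \ge\ k(q^m-1).
\]
Hence $R_h/(q-1)$ is an integer that is at least $\frac{k(q^m-1)}{d(q-1)}$, so
\[
R_h\ \ge\ (q-1)\left\lceil\frac{k(q^m-1)}{d(q-1)}\right\rceil .
\]
This replaces the finer divisibility by $\mathrm{lcm}$ used in the homogeneous case, which fails here because $|\boldsymbol t|$ is no longer constant.

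Summing over the $em$ values of $h$ gives
\[
V(\boldsymbol r)\ \ge\ \frac{em(q-1)}{q^m-1}\left\lceil\frac{k(q^m-1)}{d(q-1)}\right\rceil .
\]
Since the valuation is an integer, we may take the ceiling, which yields the stated bound.

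The main point to check is the reduction to $D=[0,d]$. A polynomial of degree $d$ has $\mathrm{Deg}(f)\subseteq[0,d]$, and Theorem~\ref{thm:p_density_program_RM_codes} requires $D\ne\{0\}$, which holds since $d\ge1$. One also needs that the inequality $|\boldsymbol t|\le d$ is used in the correct direction: it bounds $dR_h$ from above by nothing we need, only from below via the $t_i$-sums, which is what the argument uses. Everything else is routine.
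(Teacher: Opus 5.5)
Your proof is correct and is essentially the paper's argument: take $D=[0,d]$ in Theorem~\ref{thm:p_density_program_RM_codes}, rewrite $V(\boldsymbol r)$ via the cyclic shift $\tau$, and bound each $R_h$ from below using $dR_h\ge\sum_{\boldsymbol t}\tau^h(r_{\boldsymbol t})|\boldsymbol t|\ge k(q^m-1)$ together with $(q-1)\mid R_h$.

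One correction: your reduction to $r_{\boldsymbol 0}=0$ is wrongly justified. The entry $r_{\boldsymbol 0}$ does appear in the constraint $|\boldsymbol r|\equiv 0\pmod{q-1}$, so setting it to zero can destroy the divisibility $(q-1)\mid R_h$ that you use later. The reduction is also unnecessary: the $\boldsymbol t=\boldsymbol 0$ term satisfies $\tau^h(r_{\boldsymbol 0})\,|\boldsymbol 0|=0\le d\,\tau^h(r_{\boldsymbol 0})$, so you can simply keep $r_{\boldsymbol 0}$ in the sums, as the paper does.
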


The proof of Theorem \ref{thm:non_homogeneous_first_bound} is similar to that of Theorem \ref{thm:homogeneous_polynomial} and is deferred to  Appendix \ref{appendix_B}.

This result actually recovers the famous Ax's theorem, which characterizes the $p$-adic valuation of $N_{q^m}(f = 0)$ for arbitrary $f$ with degree $d$.

\begin{corollary}[\cite{ax1964zeroes}, Ax's theorem]\label{Ax_theorem}
    Let $f\in \mathbb{F}_q[x_1,\ldots,x_k]$ be a polynomial of degree $d$. Then the number of solution of $f =  0$ in $\mathbb{F}_q^k$ is divisible by $q^{\lceil k/d\rceil -1}$. Furthermore, there exists a polynomial $f$ with degree $d$ such that $N_{q}(f = 0)$ is sharply divided by $q^{\lceil k/d\rceil -1}$.
\end{corollary}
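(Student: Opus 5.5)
We specialize Theorem~\ref{thm:non_homogeneous_first_bound} to $m=1$ and translate from the Artin--Schreier count back to the count of zeros of $f$. For $m=1$ the map $\mathrm{Tr}_{q/q}$ is the identity, so $\mathrm{Tr}_{q/q}(f)=0$ simply means $f=0$. By \eqref{solutions equivalence},
\[
N_q(f=y^q-y)=q\,N_q(f=0).
\]
Hence $\nu_p(N_q(f=0))=\nu_p(N_q(f=y^q-y))-e$. With $m=1$ the bound of Theorem~\ref{thm:non_homogeneous_first_bound} becomes
\[
\left\lceil \frac{e(q-1)}{q-1}\left\lceil \frac{k(q-1)}{d(q-1)}\right\rceil\right\rceil = e\left\lceil \frac{k}{d}\right\rceil .
\]
Therefore $\nu_p(N_q(f=0))\ge e(\lceil k/d\rceil-1)$, which says exactly that $q^{\lceil k/d\rceil-1}$ divides $N_q(f=0)$. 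This proves the divisibility statement.

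For sharpness we want a polynomial of degree exactly $d$ (with $1\le d\le k(q-1)$, reduced) for which $\nu_p(N_q(f=y^q-y))=e\lceil k/d\rceil$.

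We take $D=\{d\}$ in Theorem~\ref{thm:p_density_program_RM_codes}, so we look for homogeneous $f$ of degree $d$. Proposition~\ref{upper bound} with $m=1$ already produces a homogeneous $f$ of degree $d$ with valuation at most $e\lceil k/d\rceil$. Combined with the lower bound above, its valuation equals $e\lceil k/d\rceil$, so $N_q(f=0)$ has $p$-adic valuation exactly $e(\lceil k/d\rceil-1)$.

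One point needs care: Proposition~\ref{upper bound} only guarantees $\mathrm{Deg}(f)\subseteq\{d\}$, so $f$ might be the zero polynomial. This cannot happen. If $f=0$ then $N_q(f=0)=q^k$, of valuation $ek$. The lower bound argument in Theorem~\ref{thm:p_density_program_RM_codes} showed that the minimum value is strictly below $emk$, here $ek$. Since the achieving $f$ has valuation below $ek$, it is nonzero and hence genuinely of degree $d$.

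The only other subtlety is to be clear about the sense of ``sharply divided.'' Here it means that $q^{\lceil k/d\rceil-1}$ divides $N_q(f=0)$ and the exact $p$-adic valuation is $e(\lceil k/d\rceil-1)$, so no higher power of $p$ divides. In particular $q^{\lceil k/d\rceil}$ does not divide, which is what sharpness requires.
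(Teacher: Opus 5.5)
Your proposal is correct and follows the paper's proof. The lower bound comes from Theorem~\ref{thm:non_homogeneous_first_bound} at $m=1$ together with $N_q(f=y^q-y)=q\,N_q(f=0)$. Sharpness comes from the homogeneous construction: the paper cites Theorem~\ref{thm:homogeneous_polynomial}, whose existence part is exactly Proposition~\ref{upper bound}, and its coprimality condition holds automatically when $m=1$. Your extra check that the extremal $f$ is nonzero, and hence has degree exactly $d$, is a detail the paper leaves implicit.
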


\begin{proof}
Note that $N_{q}(f = y^q-y) = qN_{q}(f = 0)$. We set $m = 1$ in Theorem \ref{thm:homogeneous_polynomial} and Theorem \ref{thm:non_homogeneous_first_bound}. Theorem \ref{thm:non_homogeneous_first_bound} proves the lower bound and Theorem \ref{thm:homogeneous_polynomial} guarantees the existence of such polynomial.
\end{proof}

We first define  
\[W_q(D,\mathbb{F}_{q^m}^k) = \max\{S_q(\boldsymbol{t})=S_q(t_1)+\cdots+S_q(t_k) :\boldsymbol{t}\in [0,q^m-1]^k, |\boldsymbol{t}|\in D \},\]
where $S_q(t)$ is the digit sum of $t$ in the $q$-adic expansion. Next, we recall Mueller's result on the $p$-divisibility of the points on Artin--Shreier curves.

\begin{theorem}[{\cite[Theorem VI.3]{Mueller2013}}]
Let $f(x)\in \mathbb{F}_q[x]$ be a polynomial with degree set $D$. The $p$-adic valuation of $N_{q^m}(f(x) = y^q-y)$ is at least   
\[e\left\lceil \frac{m}{W_q(D,\mathbb{F}_{q^m})} \right\rceil.\]
\end{theorem}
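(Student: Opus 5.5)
The plan is to specialize the program \eqref{eq:p_density_program} of Theorem~\ref{thm:p_density_program_RM_codes} to $k=1$ and bound its value from below by a digit-sum argument. A polynomial in $\mathbb F_q[x]$ with degree set $D$ is in particular a polynomial in $\mathbb F_{q^m}[x]$ with $\mathrm{Deg}(f)\subseteq D$, so a lower bound for the program is enough. The trivial case $D\subseteq\{0\}$ is set aside, since then $f$ is constant. For $k=1$ we have $\mathcal T_D=D\cap[0,q^m-1]$. The constraints on $\boldsymbol r=(r_t)_{t\in\mathcal T_D}$ become:
\begin{itemize}
\item[(i)] $\sum_t r_t\equiv 0\pmod{q-1}$;
\item[(ii)] $u(\boldsymbol r):=\sum_t r_t t$ is a positive multiple of $q^m-1$.
\end{itemize}
Write $W=W_q(D,\mathbb F_{q^m})=\max_{t\in\mathcal T_D}S_q(t)$. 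The goal is then
\[
\frac{1}{p-1}\sum_t S_p(r_t)\ \ge\ e\left\lceil \frac{m}{W}\right\rceil
\]
for every feasible $\boldsymbol r$.

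\emph{Step 1 (a $q$-digit bound for one feasible tuple).} I will use three standard facts about $q$-ary digit sums of nonnegative integers:
\begin{itemize}
\item[(a)] $S_q(x+y)\le S_q(x)+S_q(y)$, since carries only decrease the digit sum.
\item[(b)] $S_q(xy)\le S_q(x)S_q(y)$. This follows from (a) by expanding $y$ in base $q$, since multiplying by $q^j$ preserves $S_q$.
\item[(c)] Every positive multiple $u$ of $q^m-1$ satisfies $S_q(u)\ge m(q-1)$. To see this, split $u$ into blocks of $m$ base-$q$ digits and add the blocks. By (a) this does not increase $S_q$, and it preserves $u$ modulo $q^m-1$ and positivity. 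Iterating, we end at $q^m-1$, whose digit sum is $m(q-1)$.
\end{itemize}
Combining these with (ii) gives
\[
m(q-1)\le S_q(u)\le \sum_t S_q(r_t)S_q(t)\le W\sum_t S_q(r_t).
\]
Moreover $\sum_t S_q(r_t)\equiv\sum_t r_t\equiv 0\pmod{q-1}$ by (i). Hence $\sum_t S_q(r_t)$ is a multiple of $q-1$ that is at least $m(q-1)/W$, so it is at least $(q-1)\lceil m/W\rceil$.

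\emph{Step 2 (passing from $S_q$ to $S_p$ by averaging over $p$-ary shifts).} Step 1 gives a bound on $q$-digit sums, whereas the objective involves $S_p$. A digitwise comparison between $S_p(a)/(p-1)$ and $e\,a/(q-1)$ fails in general, so I will instead use the $p$-ary cyclic shift $\tau$ from the proof of Theorem~\ref{thm:homogeneous_polynomial}. Since $\tau(x)\equiv px\pmod{q^m-1}$, $\tau(x)=0$ only for $x=0$, and $(q-1)\mid(q^m-1)$, the tuple $\tau^h(\boldsymbol r)$ again satisfies (i) and (ii) for every $h$. Step 1 therefore gives
\[
\sum_t S_q(\tau^h(r_t))\ge (q-1)\left\lceil\frac{m}{W}\right\rceil \qquad\text{for all } 0\le h\le em-1.
\]
Next I will verify the identity
\[
\sum_{h=0}^{em-1}S_q(\tau^h(x))=m\,\frac{q-1}{p-1}\,S_p(x).
\]
As $h$ runs over $[0,em-1]$, each $p$-adic digit of $x$ visits each residue position $j\bmod e$ exactly $m$ times, contributing $p^j$ times that digit to the $q$-digit sum, and $\sum_{j=0}^{e-1}p^j=\frac{q-1}{p-1}$. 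Summing the Step 1 bound over $h$ then yields
\[
\frac{m(q-1)}{p-1}\sum_t S_p(r_t)\ge em(q-1)\left\lceil\frac{m}{W}\right\rceil,
\]
which is exactly the desired inequality. Theorem~\ref{thm:p_density_program_RM_codes} then concludes the proof.

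The main obstacle is Step 1, specifically (b) and (c). Care is needed because $r_tt$ may exceed $q^m-1$, so these must be stated for unrestricted base-$q$ expansions rather than inside $[0,q^m-1]$. Once these are in place, the integrality trick of using (i) to round up to a multiple of $q-1$, together with the shift-averaging identity, is routine.
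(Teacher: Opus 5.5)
Your proof is correct, and it takes a genuinely different route from the paper.

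\textbf{The paper's route.} The paper does not prove this statement inside the program of Theorem~\ref{thm:p_density_program_RM_codes}. It obtains Mueller's bound as the case $k=1$, $f\in\mathbb F_q[x]$, of Theorem~\ref{thm:non_homogeneous_second_bound}, which it proves by Weil descent. Writing each $x_i$ in an $\mathbb F_q$-basis of $\mathbb F_{q^m}$ turns $\mathrm{Tr}_{q^m/q}(x_1^{t_1}\cdots x_k^{t_k})$ into a polynomial over $\mathbb F_q$ in $mk$ variables of degree at most $S_q(\boldsymbol t)$. Hence $\mathrm{Tr}_{q^m/q}(f)$ has degree at most $W$. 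Ax's theorem (Corollary~\ref{Ax_theorem}) then gives $\nu_p(N_{q^m}(\mathrm{Tr}_{q^m/q}(f)=0))\ge e(\lceil mk/W\rceil-1)$, and \eqref{solutions equivalence} supplies the extra $e$.

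\textbf{Your route.} You bound the program \eqref{eq:p_density_program} directly at level $q^m$. The chain $m(q-1)\le S_q(u)\le\sum_t S_q(r_t)S_q(t)$ is the combinatorial counterpart of the descent, with $S_q(t)$ playing the role of the descended degree. The $\tau$-averaging, together with your identity $\sum_{h}S_q(\tau^h(x))=m\frac{q-1}{p-1}S_p(x)$, replaces the appeal to Ax. Both routes ultimately rest on the trace-code criterion, since the paper derives Ax's theorem from Theorem~\ref{thm:non_homogeneous_first_bound}. However, you use it once at level $q^m$ in $k$ variables, whereas the paper uses it at level $q$ in $mk$ variables after descent.

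\textbf{What each buys.} The paper's argument is shorter once Ax's theorem is available, and it explains conceptually why $S_q(t)$ is the right weight. Yours is self-contained and shows that Mueller's bound is simply a relaxation of \eqref{eq:p_density_program}. It also generalizes with almost no change:
\begin{itemize}
\item Summing $S_q(u_i)\ge m(q-1)$ over $i\in[k]$ and using $\sum_i S_q(t_i)=S_q(\boldsymbol t)$ gives Theorem~\ref{thm:non_homogeneous_second_bound}.
\item Running Step~1 with $S_p$ in place of $S_q$ gives Theorem~\ref{thm:non_homogeneous_third_bound} without Ax--Katz. No averaging is needed there, because positive multiples of $q^m-1=p^{em}-1$ already satisfy $S_p\ge em(p-1)$.
\end{itemize}
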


In fact, Ax's theorem implies another lower bound (Theorem \ref{thm:non_homogeneous_second_bound}) for $\nu_p(N_{q^m}(f = y^q-y))$ for general $f\in \mathbb{F}_{q^m}[x_1,\ldots,x_k]$. Note that $N_{q^m}(f = y^q-y) = qN_{q^m}(\mathrm{Tr}_{q^m/q}(f)=0)$. The idea is to treat the functions $\mathrm{Tr}_{q^m/q}(x_1^{t_1}\cdots x_k^{t_k}):\mathbb{F}_{q^m}^k\to \mathbb{F}_q$ as a $km$-variate homogeneous degree-$S_q(\boldsymbol{t})$ polynomial function mapping from $\mathbb{F}_q^{mk}$ to $\mathbb{F}_q$. Thus, we can generalize Mueller's result by showing that the divisibility result indeed holds for any multivariate polynomials in the extension field $\mathbb{F}_{q^m}$. The proof of the subsequent theorem is deferred to Appendix \ref{appendix_B}.

\begin{theorem}
\label{thm:non_homogeneous_second_bound}
    Let $f\in\mathbb{F}_{q^m}[x_1,\ldots,x_k]$ be a non-constant polynomial with degree set $D$. Then the $p$-adic valuation of $N_{q^m}(f=y^q-y)$ is at least
    \[ e \left\lceil \frac{mk}{W_q(D,\mathbb{F}_{q^m}^k)} \right\rceil, \]
    where $W_q(D,\mathbb{F}_{q^m}^k) = \max\{S_q(\boldsymbol{t})=S_q(t_1)+\cdots+S_q(t_k) :\boldsymbol{t}\in [0,q^m-1]^k, |\boldsymbol{t}|\in D \}$.
\end{theorem}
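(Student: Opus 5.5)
We reduce to Ax's theorem (Corollary~\ref{Ax_theorem}) over the base field $\mathbb{F}_q$ by viewing $\mathrm{Tr}_{q^m/q}(f)$ as a polynomial function in $mk$ variables over $\mathbb{F}_q$. Fix an $\mathbb{F}_q$-basis $\beta_1,\ldots,\beta_m$ of $\mathbb{F}_{q^m}$ and write $x_i=\sum_{\ell=1}^m z_{i\ell}\beta_\ell$ with $z_{i\ell}\in\mathbb{F}_q$. By \eqref{solutions equivalence}, $N_{q^m}(f=y^q-y)=q\,N_{q^m}(\mathrm{Tr}_{q^m/q}(f)=0)$. The right-hand count equals the number of zeros in $\mathbb{F}_q^{mk}$ of the function $F(\boldsymbol z)=\mathrm{Tr}_{q^m/q}(f(\boldsymbol x(\boldsymbol z)))$, so it suffices to show $F$ is represented by a polynomial over $\mathbb{F}_q$ of degree at most $W:=W_q(D,\mathbb{F}_{q^m}^k)$.

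The key step is the degree bound. Write $f=\sum_{\boldsymbol t}c_{\boldsymbol t}\boldsymbol x^{\boldsymbol t}$ with $\boldsymbol t\in[0,q^m-1]^k$ and $|\boldsymbol t|\in D$. By linearity of the trace it suffices to treat one monomial. For $t=\sum_j a_j q^j$ in base $q$,
\[
x_i^{t}=\prod_j \Big(\sum_\ell z_{i\ell}\beta_\ell^{q^j}\Big)^{a_j},
\]
since $z\mapsto z^{q^j}$ fixes $\mathbb{F}_q$. This expression is a polynomial in the $z_{i\ell}$ with coefficients in $\mathbb{F}_{q^m}$, homogeneous of degree $\sum_j a_j=S_q(t)$. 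Multiplying over $i$ makes $c_{\boldsymbol t}\boldsymbol x^{\boldsymbol t}$ homogeneous of degree $S_q(\boldsymbol t)\le W$ in $\boldsymbol z$. Applying $\mathrm{Tr}_{q^m/q}(y)=\sum_{s}y^{q^s}$ raises each coefficient to the $q^s$ power but leaves the $z$-monomials unchanged as functions on $\mathbb{F}_q^{mk}$, again because $z^{q}=z$ there. Hence $\mathrm{Tr}(c_{\boldsymbol t}\boldsymbol x^{\boldsymbol t})$ agrees on $\mathbb{F}_q^{mk}$ with a polynomial of degree $\le S_q(\boldsymbol t)$, and its coefficients lie in $\mathbb{F}_q$ because the function is $\mathbb{F}_q$-valued and reduced representations are unique. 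Summing over $\boldsymbol t$, $F$ is given by a polynomial over $\mathbb{F}_q$ in $mk$ variables of degree at most $W$.

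Now apply Ax's theorem (Corollary~\ref{Ax_theorem}) with $k$ replaced by $mk$. If $F$ has degree $d'$ with $1\le d'\le W$, then $q^{\lceil mk/d'\rceil-1}$ divides the zero count, so the zero count has $p$-adic valuation at least $e(\lceil mk/W\rceil-1)$. Adding $\nu_p(q)=e$ gives $e\lceil mk/W\rceil$. Note that $W\ge1$: $f$ is non-constant, so some $\boldsymbol t\neq\boldsymbol0$ appears, and then $S_q(\boldsymbol t)\ge1$.

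The main obstacle is the degenerate case where $F$ reduces to a constant. If $F\equiv0$, the zero count is $q^{mk}$, with valuation $emk\ge e(\lceil mk/W\rceil-1)$. If $F$ is a nonzero constant, the count is $0$, and divisibility holds trivially. Either way the bound survives. A secondary point to check carefully is that each factor $\big(\sum_\ell z_{i\ell}\beta_\ell^{q^j}\big)^{a_j}$ really does contribute degree $a_j$ in the $z$-variables, which is what turns $|\boldsymbol t|$ into $S_q(\boldsymbol t)$. Finally, as a consistency check, specializing to $k=1$ and $f\in\mathbb{F}_q[x]$ recovers Mueller's bound.
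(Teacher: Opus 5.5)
Your proposal is correct and follows the paper's proof essentially step by step. Both arguments expand each monomial in coordinates over an $\mathbb{F}_q$-basis using its base-$q$ digits, which shows that $\mathrm{Tr}_{q^m/q}(f)$ is a polynomial over $\mathbb{F}_q$ in $mk$ variables of degree at most $W_q(D,\mathbb{F}_{q^m}^k)$; they then apply Ax's theorem, handle the degenerate case, and add $\nu_p(q)=e$. One small plus: you explicitly treat the case where $\mathrm{Tr}_{q^m/q}(f)$ reduces to a nonzero constant (zero count $0$), which the paper's proof passes over silently.
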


Moreover, we can advance the reduction in the proof of Theorem \ref{thm:non_homogeneous_second_bound} by descending to the prime field $\mathbb{F}_p$. Treating $\mathbb{F}_{q^m}$ as an $em$-dimensional vector space over $\mathbb{F}_p$ allows the condition $\mathrm{Tr}_{q^m/q}(f)=0$ in $\mathbb{F}_q$ to decompose into a system of $e$ simultaneous polynomial equations over $\mathbb{F}_p$ in $emk$ variables. An application of the classical Ax-Katz theorem \cite{Katz1971on} to this system yields another lower bound. The proof of the following theorem is deferred to Appendix \ref{appendix_B}.

\begin{theorem}
\label{thm:non_homogeneous_third_bound}
    Let $q=p^e$ and $f\in\mathbb{F}_{q^m}[x_1,\ldots,x_k]$ be a non-constant polynomial with degree set $D$. Then the $p$-adic valuation of $N_{q^m}(f=y^q-y)$ is at least  
    \[ \left\lceil \frac{emk}{W_{p}(D,\mathbb{F}_{q^m}^k)} \right\rceil, \]
    where $W_{p}(D,\mathbb{F}_{q^m}^k) = \max\{S_p(\boldsymbol{t}) = S_p(t_1)+\cdots+S_p(t_k) : \boldsymbol{t} \in [0,q^m-1]^k, |\boldsymbol{t}|\in D\}$.
\end{theorem}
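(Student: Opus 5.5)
We will follow the route sketched just before the statement: descend all the way to $\mathbb{F}_p$ and apply the Ax--Katz theorem. By \eqref{solutions equivalence}, $N_{q^m}(f=y^q-y)=q\,N_{q^m}(\mathrm{Tr}_{q^m/q}(f)=0)$. It therefore suffices to show
\[
\nu_p\bigl(N_{q^m}(\mathrm{Tr}_{q^m/q}(f)=0)\bigr)\ge \left\lceil \frac{emk}{W_p}\right\rceil - e ,
\]
where we write $W_p=W_p(D,\mathbb{F}_{q^m}^k)$.

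\textbf{Step 1: reduce to a system over $\mathbb{F}_p$.} Fix an $\mathbb{F}_p$-basis $\beta_1,\ldots,\beta_{em}$ of $\mathbb{F}_{q^m}$ and write $x_i=\sum_{s}\beta_s z_{i,s}$ with $z_{i,s}\in\mathbb{F}_p$. Also fix an $\mathbb{F}_p$-basis $\gamma_1,\ldots,\gamma_e$ of $\mathbb{F}_q$. The single condition $\mathrm{Tr}_{q^m/q}(f)=0$ in $\mathbb{F}_q$ is then equivalent to the $e$ conditions
\[
F_\ell(\boldsymbol z):=\mathrm{Tr}_{q^m/p}(\gamma_\ell f(\boldsymbol x))=0,\qquad \ell=1,\ldots,e .
\]
This uses nondegeneracy of the trace form $\mathbb{F}_q\times\mathbb{F}_q\to\mathbb{F}_p$ together with transitivity of the trace. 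Hence $N_{q^m}(\mathrm{Tr}_{q^m/q}(f)=0)$ counts the common zeros in $\mathbb{F}_p^{emk}$ of $F_1,\ldots,F_e$.

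\textbf{Step 2: bound the degrees.} Expand $f=\sum_{\boldsymbol t}a_{\boldsymbol t}\boldsymbol x^{\boldsymbol t}$, where the sum is over reduced exponents $\boldsymbol t\in[0,q^m-1]^k$ with $|\boldsymbol t|\in D$. For each monomial, write $t_i=\sum_h t_{i,h}p^h$ in base $p$. Then
\[
x_i^{t_i}=\prod_h\Bigl(\sum_s\beta_s^{p^h}z_{i,s}\Bigr)^{t_{i,h}},
\]
using that $z^p=z$ on $\mathbb{F}_p$. This is a polynomial of degree at most $S_p(t_i)$ in the $z_{i,s}$. Applying $\mathrm{Tr}_{q^m/p}$ is $\mathbb{F}_p$-linear on coefficients and does not increase degree. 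Hence $\deg F_\ell\le \max_{\boldsymbol t}\sum_i S_p(t_i)\le W_p$. Some care is needed here: we want $\deg F_\ell$ bounded, not its exact value. We also need the maximum defining $W_p$ to range over exactly the exponents that can occur, which holds because $f$ is reduced and $\mathrm{Deg}(f)\subseteq D$. Since $f$ is non-constant, $W_p\ge1$.

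\textbf{Step 3: apply Katz's theorem.} We use the Ax--Katz theorem \cite{Katz1971on} with $e$ polynomials in $emk$ variables over $\mathbb{F}_p$, each of degree at most $W_p$. It gives
\[
\nu_p(N)\ge \mu:=\left\lceil\frac{emk-\sum_\ell\deg F_\ell}{\max_\ell \deg F_\ell}\right\rceil .
\]
Monotonicity is the delicate point, since the actual degrees may be smaller than $W_p$. We handle it by padding: replace each degree by the common upper bound $W_p$. The point to verify is that the Katz bound with every degree set to $W_p$ is at most the bound with the true degrees, or else argue directly that Katz's theorem holds when stated with upper bounds $d_\ell\ge\deg F_\ell$. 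Katz's proof does hold in this form: the count of zeros of polynomials of degree $\le d_\ell$ is $p$-divisible to that order, since the space of polynomials of degree $\le d_\ell$ contains the actual ones. With all degrees set to $W_p$ we get
\[
\nu_p(N)\ge\left\lceil\frac{emk-eW_p}{W_p}\right\rceil=\left\lceil\frac{emk}{W_p}\right\rceil-e .
\]
This holds whenever the quantity is nonnegative; otherwise the final bound is trivial, since it is at most $e=\nu_p(q)$. Adding $e$ for the factor $q$ yields the stated bound.

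We expect the main obstacle to be Step 2's degree bookkeeping, in particular the justification that raising to $p^h$ acts on coordinates linearly. A secondary obstacle is stating Katz's theorem with degree upper bounds correctly.
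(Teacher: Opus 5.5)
Your proposal is correct and follows essentially the same route as the paper: expand in an $\mathbb{F}_p$-basis, bound the degrees of the $e$ polynomials $\mathrm{Tr}_{q/p}(\beta_r\mathrm{Tr}_{q^m/q}(f))$ by $W_p(D,\mathbb{F}_{q^m}^k)$, apply the Ax--Katz theorem, and add $e=\nu_p(q)$. The monotonicity point you flag is passed over in one line in the paper. It is elementary: when $emk\ge eW_p$, the numerator $emk-\sum_r\deg G_r$ can only grow and the denominator $\max_r\deg G_r$ can only shrink relative to the padded values. Otherwise the bound is at most $e$ and hence trivial. The paper handles zero polynomials by dropping them from the system, which your upper-bound formulation of Ax--Katz also covers.
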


\section{Conclusions}\label{sec:conclusions}

Our approach completely circumvents the weight polarization method originally introduced by Ward in \cite{ward1979combinatorial}, which is later applied to establish Ward's divisibility criterion \cite[Theorem 5.3]{ward1990weight}. Instead, we adopt a more standard algebraic framework to analyze the weights of codewords: expressing the weights as exponential sums and subsequently applying Stickelberger's theorem. This technique dates back to the proof of Ax's theorem on the zeros of polynomials \cite{ax1964zeroes}, and has underpinned subsequent works by Delsarte, McEliece \cite{Delsarte1971, McEliece1972, delsarte1976} on the $p$-adic valuation of linear codes. 

This framework provides an effective pathway to establish lower bounds on $p$-adic valuations. Indeed, recall from \eqref{eq:weight_formula_2} in the proof of Theorem~\ref{thm:trace_version_criterion_general_case} that the weight of any codeword $c(\boldsymbol{\alpha})\in C$ can be expressed as
\[
\mathrm{wt}(c(\boldsymbol{\alpha})) = -(q-1)\sum_{\boldsymbol{r}\in \mathcal{R} }\frac{1}{q}\left(\prod_{i = 1}^k\lambda_{r_i}\right) \left(\prod_{i =1 }^kT(\alpha_i)^{r_i}\right) \left(\sum_{j = 1}^n\prod_{i = 1}^kT(g_{ij})^{r_i}\right).
\]
By Stickelberger's theorem (Theorem \ref{Stickelberger's Theorem}), the $p$-adic valuation of each summand above is bounded from below by
\[
\Theta = \min_{\boldsymbol{r}\in\mathcal R}
\left\{
\frac{1}{p-1}\sum_{i=1}^k S_p(r_i)
+
\nu_p\left(
\sum_{j=1}^n\prod_{i=1}^kT(g_{ij})^{r_i}
\right)
-e
\right\}.
\]
Consequently, we have $\nu_p(\mathrm{wt}(c(\boldsymbol{\alpha}))) \ge \Theta$, which implies that $\nu_p(C)\ge \Theta$.

To establish the sharpness of such lower bounds, previous works of Delsarte, McEliece often relied on highly intricate calculations. In contrast, we first develop a divisibility criterion for trace codes and then apply it to the trace representation of semisimple abelian codes, rather than working directly with the group-algebra structure. We believe that the same approach can also yield sharp divisibility results for abelian codes over other rings.

\begin{appendices}
\section{Proofs of Theorems \ref{thm:non_homogeneous_first_bound} and \ref{thm:non_homogeneous_second_bound}--\ref{thm:non_homogeneous_third_bound}}
\label{appendix_B}

\begin{proof}[Proof of Theorem \ref{thm:non_homogeneous_first_bound}]
    Let $D = [0,d]$. By Theorem \ref{thm:p_density_program_RM_codes}, the $p$-adic valuation of $N_{q^m}(f=y^q-y)$ is at least
    \begin{equation*}
    \label{eq:p_density_program_arbitrary}
    \begin{aligned}
        \min\quad &V(\boldsymbol{r})=\frac{1}{p-1}\sum_{\boldsymbol{t}\in \mathcal{T}_{D}}S_p(r_{\boldsymbol{t}}),\\
        \text{s.t.}\quad &\begin{cases}
            \boldsymbol{r}\in [0,q^m-1]^{\mathcal{T}_{D}},\\
            |\boldsymbol{r}|\equiv 0\pmod{q-1},\\
            \sum_{\boldsymbol{t}\in \mathcal{T}_{D}}r_{\boldsymbol{t}}t_i \text{ is a positive multiple of } q^m-1,\ 1\le i\le k.
        \end{cases}
    \end{aligned}
    \end{equation*}
    Then we have
    \begin{equation}
    \label{eq:arbitrary_lower_bound}
    \begin{aligned}
         V(\boldsymbol{r}) = \frac{1}{p-1}\sum_{\boldsymbol{t}\in \mathcal{T}_{D}}S_p(r_{\boldsymbol{t}}) = \frac{1}{p^{em}-1}\sum_{\boldsymbol{t}\in \mathcal{T}_{D}}\sum_{h = 0}^{em-1}\tau^h(r_{\boldsymbol{t}})= \frac{1}{q^m-1}\sum_{h = 0}^{em-1}\sum_{\boldsymbol{t}\in \mathcal{T}_{D}}\tau^h(r_{\boldsymbol{t}}).
    \end{aligned}
    \end{equation}

    Let $0\le h\le em-1$. Since $|\boldsymbol{t}| \le d$ for all $\boldsymbol{t} \in \mathcal{T}_{D}$, we have the inequality
    \[ d \sum_{\boldsymbol{t}\in \mathcal{T}_{D}} \tau^h(r_{\boldsymbol{t}}) \ge \sum_{\boldsymbol{t}\in \mathcal{T}_{D}} \tau^h(r_{\boldsymbol{t}}) |\boldsymbol{t}| = \sum_{i=1}^k \left( \sum_{\boldsymbol{t}\in \mathcal{T}_{D}} \tau^h(r_{\boldsymbol{t}}) t_i \right). \]
    Similar to the homogeneous case, for each coordinate $i$, the inner sum 
    \[ \sum_{\boldsymbol{t}\in \mathcal{T}_{D}}\tau^h(r_{\boldsymbol{t}})t_i \equiv p^h \sum_{\boldsymbol{t}\in \mathcal{T}_{D}}r_{\boldsymbol{t}}t_i \equiv 0 \pmod{q^m-1}\]
    is a positive multiple of $q^m-1$. Therefore,
    \[ d \sum_{\boldsymbol{t}\in \mathcal{T}_{D}} \tau^h(r_{\boldsymbol{t}}) \ge k(q^m-1) \implies \sum_{\boldsymbol{t}\in \mathcal{T}_{D}} \tau^h(r_{\boldsymbol{t}}) \ge \frac{k(q^m-1)}{d}. \]
    Note also that $\sum_{\boldsymbol{t}\in\mathcal{T}_{D}}\tau^h(r_{\boldsymbol{t}})\equiv\sum_{\boldsymbol{t}\in\mathcal{T}_{D}}p^hr_{\boldsymbol{t}}\equiv 0\pmod{q-1}$. Hence $\sum_{\boldsymbol{t}\in\mathcal{T}_{D}}\tau^h(r_{\boldsymbol{t}})$ must be a positive multiple of $q-1$. This yields 
    \begin{equation*}
    \label{eq:arbitrary_lower_bound_2}
    \begin{aligned}
        \sum_{\boldsymbol{t}\in \mathcal{T}_{D}} \tau^h(r_{\boldsymbol{t}})\ge (q-1)\left\lceil\frac{k(q^m-1)}{d(q-1)}\right\rceil.  
    \end{aligned}
    \end{equation*}
    Summing this inequality over all $0 \le h \le em-1$, and substituting it back into \eqref{eq:arbitrary_lower_bound}, we obtain
    \begin{align*}
        V(\boldsymbol{r}) \ge \frac{1}{q^m-1} \sum_{h = 0}^{em-1} (q-1)\left\lceil\frac{k(q^m-1)}{d(q-1)}\right\rceil = \frac{em(q-1)}{q^m-1} \left\lceil\frac{k(q^m-1)}{d(q-1)}\right\rceil.
    \end{align*}
    This completes the proof.
\end{proof}

\begin{proof}[Proof of Theorem \ref{thm:non_homogeneous_second_bound}]
    Let $\{\gamma_1,\ldots,\gamma_m\}$ be a basis of $\mathbb{F}_{q^m}$ over $\mathbb{F}_q$. Each element $x_i \in \mathbb{F}_{q^m}$ is an $\mathbb{F}_q$-linear combination $x_i = \sum_{j=1}^m y_{i,j}\gamma_j$ for some $y_{i,j}\in \mathbb{F}_q$. Consider an arbitrary monomial $x_1^{t_1}\cdots x_k^{t_k}$ with $|\boldsymbol{t}|\in D$. We write the $q$-adic expansion of each exponent as $t_i = \sum_{\ell=0}^{m-1} t_{i,\ell} q^\ell$ for $0\le t_{i,\ell}\le q-1$. By the property $y_{i,j}^q = y_{i,j}$ for $y_{i,j} \in \mathbb{F}_q$, we can expand the monomial in variables $y_{i,j}$ as
    \begin{align*}
        x_1^{t_1}\cdots x_k^{t_k} =\prod_{i =1}^k \prod_{\ell = 0}^{m-1} x_i^{t_{i,\ell}q^\ell}= \prod_{i =1}^k \prod_{\ell = 0}^{m-1} \left( \sum_{j=1}^m y_{i,j}\gamma_j \right)^{t_{i,\ell}q^\ell} = \prod_{i =1}^k \prod_{\ell = 0}^{m-1} \left( \sum_{j=1}^m y_{i,j}\gamma_j^{q^\ell} \right)^{t_{i,\ell}}.
    \end{align*}
    It is easy to see that this expansion forms a homogeneous polynomial in $mk$ variables $y_{i,j}$ over $\mathbb{F}_{q^m}$ with total degree exactly $\sum_{i=1}^k S_q(t_i) = S_q(\boldsymbol{t})$. Therefore, the function $\mathrm{Tr}_{q^m/q}(x_1^{t_1}\cdots x_k^{t_k})$ is either identically 0, or can be written as a homogeneous polynomial in $\mathbb{F}_q[y_{1,1}, \ldots, y_{k,m}]$ with degree $S_q(\boldsymbol{t})$. 
    
    Consequently, for any $f \in \mathbb{F}_{q^m}[x_1, \ldots, x_k]$ of degree  $d$, its trace form $\mathrm{Tr}_{q^m/q}(f(x_1,\ldots,x_k))$ can be expressed as a polynomial $F(y_{1,1}, \ldots, y_{k,m}) \in \mathbb{F}_q[y_{1,1}, \ldots, y_{k,m}]$, which is either the zero polynomial, or of degree at most $\max_{|\boldsymbol{t}| \in D} S_q(\boldsymbol{t}) = W_q(D,\mathbb{F}_{q^m}^k)$.
    
    If $F$ is not the zero polynomial, then by Corollary \ref{Ax_theorem} (Ax's theorem), we have
    \[\nu_p(N_{q^m}(\mathrm{Tr}_{q^m/q}(f)=0))=\nu_p(N_{q^m}(F=0))\ge e\left( \left\lceil \frac{mk}{W_q(D,\mathbb{F}_{q^m}^k)} \right\rceil - 1 \right).\]
    If $F$ is the zero polynomial, then the above inequality also holds. By \eqref{solutions equivalence}, the $p$-adic valuation of $N_{q^m}(f=y^q-y)$ satisfies
    \begin{align*}
        \nu_p(N_{q^m}(f=y^q-y)) = \nu_p(q) + \nu_p(N_{q^m}(\mathrm{Tr}_{q^m/q}(f)=0)) 
        \ge  e \left\lceil \frac{mk}{W_q(D,\mathbb{F}_{q^m}^k)} \right\rceil.
    \end{align*}
    This completes the proof.
\end{proof}

\begin{proof}[Proof of Theorem \ref{thm:non_homogeneous_third_bound}]
    Let $\{\gamma_1,\ldots,\gamma_{em}\}$ be a basis of $\mathbb{F}_{q^m}$ over $\mathbb{F}_p$. Each element $x_i \in \mathbb{F}_{q^m}$ is an $\mathbb{F}_p$-linear combination $x_i = \sum_{j=1}^{em} y_{i,j}\gamma_j$ for some $y_{i,j}\in \mathbb{F}_p$. Consider an arbitrary monomial $x_1^{t_1}\cdots x_k^{t_k}$ with $|\boldsymbol{t}|\in D$. We write the base-$p$ expansion of each exponent as $t_i = \sum_{\ell=0}^{em-1} t_{i,\ell} p^\ell$ for $0\le t_{i,\ell}\le p-1$. By the property $y_{i,j}^p = y_{i,j}$ for $y_{i,j} \in \mathbb{F}_p$, we can expand the monomial in variables $y_{i,j}$ as
    \begin{align*}
        x_1^{t_1}\cdots x_k^{t_k} =\prod_{i =1}^k \prod_{\ell = 0}^{em-1} x_i^{t_{i,\ell}p^\ell}= \prod_{i =1}^k \prod_{\ell = 0}^{em-1} \left( \sum_{j=1}^{em} y_{i,j}\gamma_j \right)^{t_{i,\ell}p^\ell} = \prod_{i =1}^k \prod_{\ell = 0}^{em-1} \left( \sum_{j=1}^{em} y_{i,j}\gamma_j^{p^\ell} \right)^{t_{i,\ell}}.
    \end{align*}
    It is easy to see that this expansion forms a homogeneous polynomial in $emk$ variables $y_{i,j}$ over $\mathbb{F}_{q^m}$ with degree exactly $\sum_{i=1}^k S_p(t_i) = S_p(\boldsymbol{t})$. Therefore, any polynomial $f \in \mathbb{F}_{q^m}[x_1, \ldots, x_k]$ with degree set $D$ can be expressed as a polynomial $G(y_{1,1}, \ldots, y_{k,em})$ over $\mathbb{F}_{q^m}$ with degree at most $\max_{|\boldsymbol{t}| \in D} S_p(\boldsymbol{t}) = W_p(D,\mathbb{F}_{q^m}^k)$.
    
    To evaluate $N_{q^m}(\mathrm{Tr}_{q^m/q}(f)=0)$, let $\{\beta_1, \ldots, \beta_e\}$ be a basis of $\mathbb{F}_q$ over $\mathbb{F}_p$. We define
    \[G_r(y_{1,1}, \ldots, y_{k,em}) = \mathrm{Tr}_{q/p}(\beta_r\mathrm{Tr}_{q^m/q}(f(x_1,\ldots,x_k))),\quad \text{for }1\le r\le e,\]
    where each $G_r \in \mathbb{F}_p[y_{1,1}, \ldots, y_{k,em}]$ is either the zero polynomial, or a polynomial over $\mathbb{F}_p$ with degree at most $\deg(G) \le W_p(D,\mathbb{F}_{q^m}^k)$. The equation $\mathrm{Tr}_{q^m/q}(f)=0$ is thus equivalent to the system of $e$ equations $G_1 = 0, \ldots, G_e = 0$ over $\mathbb{F}_p$. 

    If no $G_i$ is the zero polynomial, then by the Ax-Katz theorem \cite{Katz1971on}, the number of common solutions $(y_{1,1}, \ldots, y_{k,em}) \in \mathbb{F}_p^{emk}$ to this system is divisible by $p^{s}$, where the exponent $s$ is bounded below by
    \[ s \ge \left\lceil \frac{emk - \sum_{r=1}^e \deg(G_r)}{\max_{1\le r\le e} \deg(G_r)} \right\rceil \ge \left\lceil \frac{emk - eW_p(D,\mathbb{F}_{q^m}^k)}{W_p(D,\mathbb{F}_{q^m}^k)} \right\rceil = \left\lceil \frac{emk}{W_p(D,\mathbb{F}_{q^m}^k)} \right\rceil - e. \]
    If some $G_i$'s but not all are the zero polynomial, then the inequality $s \ge \left\lceil \frac{emk}{W_p(D,\mathbb{F}_{q^m}^k)} \right\rceil - e$ also holds by dropping those zero polynomials. If all $G_i$'s are the zero polynomial, then $s \ge \left\lceil \frac{emk}{W_p(D,\mathbb{F}_{q^m}^k)} \right\rceil - e$ is clearly true. Since $N_{q^m}(\mathrm{Tr}_{q^m/q}(f)=0)$ is exactly the number of common zeros of $G_1, \ldots, G_e$, its $p$-adic valuation is at least $s$. By \eqref{solutions equivalence}, the $p$-adic valuation of $N_{q^m}(f=y^q-y)$ satisfies
    \begin{align*}
        \nu_p(N_{q^m}(f=y^q-y)) &= \nu_p(q) + \nu_p(N_{q^m}(\mathrm{Tr}_{q^m/q}(f)=0))\\
        &\ge e + \left( \left\lceil \frac{emk}{W_p(D,\mathbb{F}_{q^m}^k)} \right\rceil - e \right) = \left\lceil \frac{emk}{W_p(D,\mathbb{F}_{q^m}^k)} \right\rceil.
    \end{align*}
    This completes the proof.
\end{proof}

\end{appendices}

\section*{Acknowledgment}

We thank Tao Feng from Zhejiang University for his valuable suggestions. Hexiang Huang and Sihuang Hu are supported in part by the National Key Research and Development Program of China under Grant 2021YFA1001000, in part by the National Natural Science Foundation of China under Grant 12571576 and Grant 12231014, and in part by the Shandong Provincial Natural Science Foundation under Grant ZR2025QA05. Haihua Deng is partially supported by the National Key R\&D Program of China under Grant No.~2025YFA1017700, the National Natural Science Foundation of China under Grant No.~123B2011, and the Postdoctoral Fellowship Program and China Postdoctoral Science Foundation under Grant No.~BX20250059.

\end{document}